\documentclass[a4paper, 12pt]{amsart}
\usepackage[top=2cm, bottom=2cm, left = 2cm, right = 2cm]{geometry} 
\usepackage{amsthm,amsmath,amscd,amssymb,amsfonts,graphicx,mathrsfs}
\usepackage{pdfsync}
\usepackage[all]{xy}
\usepackage{tikz-cd}
\usepackage{color}
\usepackage{mathtools}
\usepackage{microtype}
\usepackage{textcomp}
\usepackage{booktabs}
\usepackage{tabularx}
\usepackage[english]{babel} 
\usepackage[utf8]{inputenc}
\usepackage{bbm}
\usepackage{graphicx}
\usepackage{adjustbox}
\usepackage{longtable}
\usepackage[table]{xcolor}
\usepackage{enumitem}
\title{Cylinders in weighted Fano varieties}

\author{Adrien Dubouloz}
\address{Laboratoire de Math\'ematique et Application, UMR 7348 CNRS, Universit\'e de Poitiers, 86000 Poitiers, France \\
Universit\'e de Bourgogne Europe, CNRS, IMB UMR 5584, 21000 Dijon, France}
\email{adrien.dubouloz@math.cnrs.fr}

\author{In-Kyun Kim}
\address{June E Huh Center for Mathematical Challenges, Korea Institute for Advanced Study, 85, Hoegiro Dongdaemun-gu, Seoul 02455, Republic of Korea }
\email{soulcraw@kias.re.kr} 

\author{Takashi Kishimoto}
\address{Department of Mathematics, Faculty of Science, Saitama University, Saitama 338-8570, Japan}
\email{kisimoto.takasi@gmail.com}

\author{Joonyeong Won}
\address{Department of Mathematics, Ewha Womans University, 52, Ewhayeodae-gil, Seodaemun-gu, Seoul, 03760, Republic of Korea} 
\email{leonwon@ewha.ac.kr}

\newtheorem{theorem}{Theorem}[section]
\newtheorem{definition}[theorem]{Definition}
\newtheorem{corollary}[theorem]{Corollary}
\newtheorem{proposition}[theorem]{Proposition}
\newtheorem{lemma}[theorem]{Lemma}
\newtheorem{question}[theorem]{Question}
\newtheorem{conjecture}[theorem]{Conjecture}
\theoremstyle{definition}
\newtheorem{example}[theorem]{Example}

\theoremstyle{remark}
\newtheorem{remark}[theorem]{Remark}
\newtheorem{notation}[theorem]{Notation}

\newcommand{\Sing}{\operatorname{Sing}}

\newcommand{\mult}{\operatorname{mult}}

\newcommand{\PP}{{\mathbb P}}

\newcommand{\msp}{\mathsf p}

\begin{document}

\begin{abstract}
Cylinders in Fano varieties receives a lot of attentions recently from the viewpoints of birational geometry and unipotent geometry. In this article, we provide a survey of several known and new results concerning the anti-canonically polar cylindricity of quasi-smooth, well-formed
weighted Fano complete intersections in weighted projective spaces. 
\end{abstract}

\maketitle
\vspace{-1em}
\section*{Introduction}
A {\it cylinder} in a normal projective variety $X$ defined over a field $\Bbbk$ of characteristic zero is a nonempty Zariski open subset of $X$ isomorphic to $Z\times_{\Bbbk} \mathbb{A}^1_{\Bbbk}$ for some affine $\Bbbk$-variety $Z$. A variety containing a cylinder is called cylindrical. 
Cylindrical normal projective varieties receive a lot of attentions for the last decade in particular due to the connection between cylinders in them and unipotent group actions on their generalized affine cones (cf. \cite{KPZ1}, \cite{KPZ3}). Every smooth rational projective surface is cylindrical, but cylindricity  is not a birational invariant, even for surfaces with mild singularities. For instance 
a del Pezzo surface $S$ of degree $1$ with canonical singularities $2D_4$, $2A_3 + 2A_1$ or $4A_2$ is not cylindrical (cf. \cite{CPW,Saw}), but its minimal resolution is cylindrical since it is rational. 

A smooth cylindrical projective variety $X$ being in particular birationally $\mathbb{P}^1$-ruled, it follows from \cite{BCHM} that it has birational model $Y$ with mild singularities which is either a Fano variety or the total space of a Mori fiber space $f:Y\to S$ over a positive dimensional base $S$. 
The cylindricity of $X$ does not necessarily imply that of $Y$, but the converse holds by \cite{DK}. It is thus natural and important to understand under which condition the total space of a Mori fiber space $f: Y \to S$ is cylindrical. By \cite{DK4,DK5}, a sufficient condition is that the fiber $Y_\eta$ of $f$ over the generic point $\eta$ of $S$, which is Fano variety of Picard rank $1$ defined over the function field $\Bbbk (S)$ of $S$, is cylindrical over $\Bbbk (S)$. The base change of a cylindrical $\Bbbk$-variety $X$ to any field extension $\Bbbk'$ of $\Bbbk$ is again cylindrical, but the converse does not hold in general since for instance a smooth conic without $\Bbbk$-rational point is not cylindrical.  So even when working over algebraically closed  fields, the study of normal cylindrical projective varieties ultimately connects to the problem characterizing the cylindricity of mildly singular Fano varieties defined over non-necessarily algebraically closed fields and of arbitrary Picard ranks. 
The main purpose of this article is to provide a survey of several techniques to study the cylindricity of Fano varieties illustrated by many examples, with a particular focus on the class of quasi-smooth, well-formed weighted Fano complete intersections in weighted projective spaces.  Section 1 presents on the one hand basic examples of cylindrical projective varieties and on the other hand a selection of nowadays classical tools that can be used to show that certain classes of mildly singular Fano
varieties do not contain any cylinder. Section 2 is devoted to a recollection on weighted projective spaces and their quasi-smooth, well formed subvarieties. In section 3, we review the current of knowledge concerning the cylindricity of del Pezzo and Fano threefold hypersurfaces in weighted projective spaces. In the last section, we present several results concerning the cylindricity of Fano complete intersections of higher codimensions in weighted projectives spaces. 

\vspace{2mm}  
\noindent {\bf Acknowledgements}: The authors thank Saitama University for the excellent working condition offered during the preparation of this article. 
The third author was partially supported by JSPS KAKENHI Grant Number 23K03047. The second author was supported by the National Research Foundation of Korea under Grant number RS-2025-00513064, while the fourth author was supported under Grant numbers RS-2025-00513064 and RS-2026-25506097.


\section{Cylindrical projective varieties: basic examples and obstructions}

\begin{definition}\label{def:apc}
 Let $X$ be a projective variety over a field $\Bbbk$ and let $H$ be an ample Weil $\mathbb{Q}$-divisor on $X$. An {\it $H$-polar cylinder} in $X$ is a cylinder $U\cong Z\times_{\Bbbk} \mathbb{A}^1_{\Bbbk}$ in $X$ whose complement $X\setminus U$ is the support of an effective Weil $\mathbb{Q}$-divisor $D$ which is $\mathbb{Q}$-linearly equivalent to $H$. 

An {\it anti-canonically polar cylinder} on a Fano variety $X$ is a $(-K_X)$-polar cylinder, where $K_X$ denotes a canonical divisor of $X$.
\end{definition}
\subsection{Normal projective varieties with unipotent group actions are cylindrical} \label{section6}

\begin{proposition}\label{prop:unipotent}
A normal projective variety $X$ endowed with a non-trivial action of a unipotent group contains an $H$-polar cylinder with respect to every ample Weil $\mathbb{Q}$-divisor class $H$. 
\end{proposition}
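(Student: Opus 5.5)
We reduce to the case of a nontrivial $\mathbb{G}_a$-action and then construct the cylinder from a slice for that action, chosen so that its complement supports a divisor of the prescribed class.

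\textbf{Reduction and choice of divisor.} A non-trivial action of a unipotent group $G$ restricts to a non-trivial action of some one-parameter subgroup $\mathbb{G}_a\subset G$. This holds because $G$ is generated by such subgroups (it has a central series with $\mathbb{G}_a$ quotients), and if all of them acted trivially, so would $G$. So we may assume $\mathbb{G}_a$ acts non-trivially on $X$.

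Since $\mathbb{G}_a$ is connected, it acts trivially on the class group of $X$, so every Weil divisor is sent by each element of $\mathbb{G}_a$ to a linearly equivalent one. Replace $H$ by a positive multiple $mH$ that is an integral Weil divisor with $|mH|$ very ample on an open set. Only ampleness is really used: after passing to a multiple we can take an effective divisor in the class.

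Next we want a $\mathbb{G}_a$-stable complete linear system. For a normal projective variety, $\mathbb{G}_a$ acts on $H^0(X,\mathcal{O}_X(mH))$ after replacing $mH$ by a $\mathbb{G}_a$-linearized reflexive sheaf. A linearization exists for a multiple because $\mathbb{G}_a$ has no nontrivial characters and $\operatorname{Pic}(\mathbb{G}_a)=0$. Note that the action on the space of sections is locally nilpotent.

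\textbf{Construction of the cylinder.} Pick a non-invariant section $s$ and take its $\mathbb{G}_a$-orbit in the vector space $V=H^0$. Since $\mathbb{G}_a$ is unipotent, the span $W$ of this orbit has a filtration. Choose $f_0$ invariant and $f_1$ with $\delta f_1=f_0$, where $\delta$ is the locally nilpotent derivation; such a pair exists by nilpotency. Then on $U_0=X\setminus \operatorname{div}(f_0)$ the rational function $f_1/f_0$ is regular and satisfies $t\cdot(f_1/f_0)=f_1/f_0+t$, so it is a slice. The affine-like open set $U_0$ is $\mathbb{G}_a$-stable because $f_0$ is invariant. By the slice theorem, $U_0\cong Z\times\mathbb{A}^1$ with $Z=\{f_1/f_0=0\}\cap U_0$.

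Ampleness of $mH$ gives that $U_0$ is affine, since it is the complement of the support of an ample effective divisor. Hence $Z$ is affine, being a closed subvariety of $U_0$ (equivalently $Z\cong U_0/\!/\mathbb{G}_a$). The complement $X\setminus U_0$ is the support of $\frac1m\operatorname{div}(f_0)+mH$ in the appropriate sense, that is, of an effective $\mathbb{Q}$-divisor $D\sim_{\mathbb{Q}}H$. So $U_0$ is an $H$-polar cylinder.

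\textbf{Main obstacle.} The delicate step is producing a $\mathbb{G}_a$-linearized reflexive rank one sheaf $\mathcal{O}_X(mH)$ on a merely normal $X$ such that the induced representation on sections is non-trivial. One has to rule out $\mathbb{G}_a$ acting trivially on all sections. This is done by noting that a sufficiently high multiple of $H$ is Cartier and very ample on the smooth locus, so its sections separate points of a general non-fixed orbit. The plan is to handle linearization by working on the big open set where $mH$ is Cartier and $X$ is smooth and using normality to extend sections. Everything else (the slice construction and affineness of complements of ample divisors) is standard.
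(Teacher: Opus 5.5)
Your proposal is correct and follows essentially the same route as the paper. Both arguments linearize a Cartier very ample multiple $mH$, which is possible because $X$ is normal and so $\mathrm{Pic}(X\times\mathbb{G}_a)=\mathrm{Pic}(X)$. Both use the equivariant embedding to see that the representation on $H^0(X,\mathcal{O}_X(mH))$ is non-trivial, and both take an invariant section $f_0$ together with a section $f_1\mapsto f_1+tf_0$, so that $f_1/f_0$ is a slice on the affine set $X\setminus\{f_0=0\}$. The paper extracts this pair from Seshadri's decomposition into Jordan blocks $\mathrm{Sym}^{n_i}V$; you extract it as $f_0=\delta^k s$, $f_1=\delta^{k-1}s$, which amounts to the same thing.

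Two small corrections: the polar divisor should read $D=\frac{1}{m}(\operatorname{div}(f_0)+mH)$, and the detour through the smooth locus in your last paragraph is unnecessary, since ampleness of $H$ already gives a multiple that is Cartier and very ample on all of $X$.
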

\begin{proof}
The hypothesis implies in particular that $X$ admits a non-trivial action of the additive group $G=\mathbb{G}_{a,\Bbbk}$. Let $m>0$ be such that $mH$ is Cartier and very ample, let $\mathcal{L}=\mathcal{O}_X(mH)$ be the corresponding invertible sheaf, let $W=H^0(X,\mathcal{L})$ and let $\varphi:X \hookrightarrow \mathbb{P}(W)$ be the associated closed embedding. Since $X$ is normal, the invertible sheaf $\mathcal{L}$ admits a $G$-linearization. This follows for instance from \cite[Lemma 2.3]{KKLV89} together with the fact that since $X$ is normal, the pull-back homomorphism $\mathrm{pr}_X^*:\mathrm{Pic}(X)\to \mathrm{Pic}(G\times_{\Bbbk} X)$ is an isomorphism (cf. \cite{Mag76}). 

The choice of such a $G$-linearization provides a linear representation $\rho :G\to \mathrm{GL}(W)$ such that $\varphi$ is $G$-equivariant with respect to the induced $G$-action on $\mathbb{P}(W)$. By \cite[Proposition 3]{Sesh62}, the linear representation $\rho$ splits in the form $W=\bigoplus_{i=1}^{r} W_i$ where $W_i=\mathrm{Sym}^{n_i}V$ for a fixed $2$-dimensional $\Bbbk$-vector space $V$ viewed as an $\mathrm{SL}_{2,\Bbbk}$-module and the $G$-action on $W_i$ is induced by the action of  $\mathrm{SL}_{2,\Bbbk}$ on $W_i$ through the inclusion $G \hookrightarrow \mathrm{SL}_{2,\Bbbk}$ as the subgroup consisting of upper triangular unipotent matrices. 

Since the action of $G$ is non-trivial, there exists at least one $W_i$ of dimension $n_i+1\geq 2$, say $W_1$. The corresponding representation $G\to \mathrm{GL}(W_1)$ is then given by $G\ni t\mapsto \exp(tJ_1)$, where $J_1$ is the $(n_1+1) \times (n_1+1)$ Jordan matrix:  $$\left(\begin{array}{cccc}
0 & 1 & 0 & 0\\
\vdots & \ddots & \ddots & 0\\
\vdots & \ddots & \ddots & 1\\
0 & \cdots & \cdots & 0
\end{array}\right). $$
In particular, there exist homogeneous coordinates $[x_0:\ldots : x_n]$ on $\mathbb{P}(W)\cong \mathbb{P}^n_{\Bbbk}$ 
such that $x_0$ is $G$-invariant and $x_1$ is mapped to $x_1+tx_0$. The restriction of the $G$
-action to $\mathbb{P}(W)\setminus \{x_0=0\}\cong \mathbb{A}^n_{\Bbbk}$ is then a translation with the affine coordinate function $y=x_1/x_0$ as a slice. It follows in turn that the $G$-stable affine open subset $X_0=X\setminus \{x_0=0\}$ of $X$ is $G$-equivariantly isomorphic to $(X_0\cap\{y=0\})\times_{\Bbbk} \mathrm{Spec}(\Bbbk[y]) $ on which $G$ acts by translations on the second factor. In particular, $X_0$ is an $mH$-polar cylinder, whence an $H$-polar cylinder, in $X$. 
\end{proof}

\begin{corollary}    \label{prop:unipotent2}
A normal Fano variety endowed with a non-trivial action of a unipotent group  contains an anti-canonical polar cylinder. 
\end{corollary}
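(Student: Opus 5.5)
The corollary follows by applying Proposition~\ref{prop:unipotent} with the particular choice $H=-K_X$. The only thing to check is that $-K_X$ is an admissible input for that proposition, namely an ample Weil $\mathbb{Q}$-divisor class on the normal projective variety $X$. After that, the conclusion of the proposition is literally the definition of an anti-canonically polar cylinder in Definition~\ref{def:apc}.

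First, since $X$ is normal, a canonical divisor $K_X$ is a well-defined Weil divisor class: it is the closure of a canonical divisor on the smooth locus, whose complement has codimension at least $2$. Second, being Fano means that $-K_X$ is $\mathbb{Q}$-Cartier and ample. So some positive multiple $-mK_X$ is an ample Cartier divisor, and $-K_X$ is an ample Weil $\mathbb{Q}$-divisor in the sense used in Proposition~\ref{prop:unipotent}. Third, the unipotent group acts non-trivially on $X$ by hypothesis. Proposition~\ref{prop:unipotent} therefore produces a cylinder $U\cong Z\times\mathbb{A}^1$ whose complement is the support of an effective $\mathbb{Q}$-divisor $D\sim_{\mathbb{Q}}-K_X$. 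This is exactly a $(-K_X)$-polar cylinder.

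The only potential subtlety is whether the notion of Fano variety in use guarantees that $-K_X$ is $\mathbb{Q}$-Cartier and ample, as opposed to merely ample in some weaker Weil sense. I will adopt the standard convention that a Fano variety is a normal projective variety with $-K_X$ an ample $\mathbb{Q}$-Cartier divisor. Under this convention nothing further needs proof: in the proof of the proposition, one takes $m$ with $-mK_X$ Cartier and very ample, and the argument goes through verbatim.
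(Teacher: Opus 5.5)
Your proposal is correct and is exactly the paper's argument: the paper gives no separate proof, and the corollary is simply Proposition~\ref{prop:unipotent} applied with $H=-K_X$. On a normal Fano variety $-K_X$ is an ample $\mathbb{Q}$-Cartier Weil divisor class, so the resulting $(-K_X)$-polar cylinder is, by Definition~\ref{def:apc}, an anti-canonically polar cylinder.
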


\begin{remark}\label{rem:unipotent2} Projective spaces and smooth quadric hypersurfaces admit effective action of unipotent groups, whence are (anti-canonically polar) cylindrical. But  many cylindrical Fano varieties do not admit non-trivial unipotent group actions. For instance every smooth del Pezzo surface is $\mathbb{A}^2$-cylindrical, but such a surface admits non-trivial action of a unipotent group if and only if it is of degree larger than $6$. See also Remark \ref{exa:non-unipotent-wps} for another $2$-dimensional example.

Another illustration in higher dimension is given by smooth prime Fano threefolds of degree $22$: all of these are  cylindrical \cite{KPZ1}, but only two members of the $6$-dimensional coarse moduli space of such threefolds admit a non-trivial action of the additive group ${\mathbb G}_a$: the Mukai-Umemura threefold $X_{22}^{\rm MU}$ with automorphism group  isomorphic to $\mathrm{PGL}_{2}$ and a threefold $X_{22}^a$ with automorphism group isomorphic to $\mathbb{G}_a\rtimes \mu_4$, see \cite{KPS}, \cite{DFK}. 
\end{remark}

\subsection{Obstructions to cylindricity}
In this section, we review existing techniques that can be used to detect the non-existence of cylinders in certain projective varieties. We first set up the notation for a construction which will be used several times in the rest of the article.

\subsubsection{Log-resolution of cylinders}\label{subsec:log-res-cyl}
Let $X$ be a normal projective variety and let $D=\sum_{i=1}^sd_i D_i$, where the $D_i$ are the irreducible components of $D$, be an effective Weil $\mathbb{Q}$-divisor such that the complement in $X$ of the support $\mathrm{Supp}(D)$ of $D$ contains a cylinder $Y\times_{\Bbbk} \mathbb{A}^1_{\Bbbk}$ for some normal affine variety $Y$. Let $Z$ be an irreducible Zariski dense open subset of the regular locus of $Y$ and let $\bar{Z}$ be a smooth projective completion of $Z$. The projection $\mathrm{pr}_Z:Z\times_{\Bbbk} \mathbb{A}^1_{\Bbbk}\to Z$ induces a rational map $\varphi:X\dashrightarrow \bar{Z}$. Let $\sigma:\tilde{X}\to X$ be a projective birational morphism restricting to an isomorphism over $Z\times_{\Bbbk} \mathbb{A}^1_{\Bbbk}$ and which is a simultaneous log-resolution of the pair $(X,D)$ and of the indeterminacy locus of $\varphi$. So, by definition, $\tilde{X}$ is a non-singular projective variety and the union of the support of proper transform $\tilde{D}=\sum_{i=1}^s d_i\tilde{D}_i$ of $D$ in $\tilde{X}$ with the exceptional locus $E=\bigcup_{i=1}^rE_i$ of $\sigma$, where the $E_i$ are the irreducible components of $E$, is an $SNC$ divisor on $\tilde{X}$.  
Since $\tilde{X}$ is regular, hence normal, the fiber $\tilde{X}_\eta$ of the induced projective morphism 
$\tilde{\varphi}=\varphi\circ \sigma:\tilde{X}\to \bar{Z}$
over the generic point $\eta$ of $\bar{Z}$ is a normal, whence regular, projective curve over the function field $K$ of $\bar{Z}$. On the other hand, since the restriction of $\tilde{\varphi}$ to $\sigma^{-1}(Z\times_{\Bbbk} \mathbb{A}^1_{\Bbbk})\cong Z\times_{\Bbbk} \mathbb{A}^1_{\Bbbk}$ is a trivial $\mathbb{A}^1$-bundle over its image, it follows that $\tilde{X}_\eta$ contains $\mathbb{A}^1_K$ as a Zariski open subset, and hence, that $\tilde{X}_\eta\cong \mathbb{P}^1_K$, where $K:=\Bbbk (\overline{Z})$ is the function field of $\overline{Z}$. It follows that $\tilde{\varphi}:\tilde{X}\to \bar{Z}$ is a $\mathbb{P}^1$-fibration, which further restricts to a trivial $\mathbb{P}^1$-bundle over a Zariski dense open subset $Z_0$ of $Z$. Moreover, the closure in $\tilde{X}$ of the $K$-rational point $\infty_\eta=\tilde{X}_\eta\setminus \mathbb{A}^1_K$ of $\tilde{X}_\eta$ is a prime Weil divisor $\tilde{H}_\infty$ on which $\tilde{\varphi}$ restricts to a dominant birational morphism $\alpha:\tilde{H}_\infty \to \bar Z$ which is an isomorphism over $Z_0$. 

\subsubsection{Pseudo-effectiveness of canonical divisors and cylindricity}\label{4-2-3}

\begin{proposition} \label{prop:pseff_cylinder} Let $X$ be a normal projective variety and let $D$ be an effective Weil $\mathbb{Q}$-divisor such that $X\setminus\mathrm{Supp}(D)$ contains a cylinder and $K_X+D$ is $\mathbb{Q}$-Cartier and pseudo-effective. Then the log pair $(X,D)$ is not log-canonical.
\end{proposition}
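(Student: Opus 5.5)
We argue by contradiction and suppose that $(X,D)$ is log-canonical. We use the log-resolution $\sigma:\tilde X\to X$ of the cylinder constructed in \S\ref{subsec:log-res-cyl}, with $\mathbb{P}^1$-fibration $\tilde\varphi:\tilde X\to\bar Z$. Since $\sigma$ is an isomorphism over the cylinder $Z\times_{\Bbbk}\mathbb{A}^1_{\Bbbk}$, every irreducible component of $\tilde D$ and every $E_i$ lies in the complement of $\sigma^{-1}(Z\times\mathbb{A}^1_{\Bbbk})$. Write
\[
K_{\tilde X}+\tilde D=\sigma^*(K_X+D)+\sum_i a_iE_i .
\]
Log-canonicity of $(X,D)$ means exactly that $a_i\ge -1$ for all $i$ and that the coefficients $d_j$ of $D$ satisfy $d_j\le 1$. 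Set $F=\sum_i E_i$. Then
\[
K_{\tilde X}+\tilde D+F=\sigma^*(K_X+D)+\sum_i(a_i+1)E_i .
\]
The right-hand side is the pull-back of a pseudo-effective $\mathbb{Q}$-Cartier class plus an effective divisor, so it is pseudo-effective. We also replace $\tilde D$ by $\tilde D'=\sum_j\tilde D_j$, the reduced boundary, which only adds an effective divisor since $d_j\le 1$. Hence $K_{\tilde X}+B$ is pseudo-effective, where $B=\tilde D'+F$ is the reduced divisor whose support is the whole complement $\tilde X\setminus\sigma^{-1}(Z\times\mathbb{A}^1_{\Bbbk})$.

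The contradiction will come from intersecting with a general fibre $\ell\cong\mathbb{P}^1$ of $\tilde\varphi$ over a point of $Z_0$. Such curves cover a dense open subset of $\tilde X$, so they form a moving family. Consequently every pseudo-effective class has non-negative intersection with $\ell$: a pseudo-effective class is a limit of effective classes, and a general member of a covering family of curves is not contained in the support of any given effective divisor. On the other hand, adjunction gives $K_{\tilde X}\cdot\ell=-2$, since $\ell$ is a smooth rational curve with trivial normal bundle. Over $Z_0$, the restriction of $B$ to $\tilde\varphi^{-1}(Z_0)$ is exactly the section $\tilde H_\infty$. Indeed, the complement of the cylinder in $\tilde\varphi^{-1}(Z_0)\cong Z_0\times\mathbb{P}^1$ is $Z_0\times\{\infty\}$, after shrinking $Z_0$ so that $\tilde\varphi^{-1}(Z_0)\setminus \sigma^{-1}(Z_0\times\mathbb{A}^1)$ is just that section. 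Components of $B$ not dominating $\bar Z$ miss the general fibre. Hence $B\cdot\ell=\tilde H_\infty\cdot\ell=1$, and so $(K_{\tilde X}+B)\cdot\ell=-1<0$, which contradicts pseudo-effectivity.

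The main point requiring care is the bookkeeping in the first step: we need $\tilde H_\infty$, which is either a component of $\tilde D$ or an exceptional divisor, to appear with coefficient at most one and all other boundary coefficients to be handled. Passing to the reduced boundary as above does this uniformly. The argument is cleanest as written, since we only need an upper bound of $1$ on the horizontal coefficient together with effectivity of the added terms. A second point is the claim that pseudo-effective classes are non-negative on a covering family of curves. This is standard: $\mathbb{Q}$-Cartierness of $K_X+D$ ensures the pull-back makes sense, and the family of fibres is covering.
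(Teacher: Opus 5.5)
Your argument is correct and is essentially the paper's proof: both intersect the ramification formula on the log-resolution of the cylinder with a general fibre $\ell$ of $\tilde\varphi$, using $K_{\tilde X}\cdot\ell=-2$ and the non-negativity of a pseudo-effective class on a covering family of curves. The only difference is packaging. You absorb the log-canonicity hypothesis into the effective terms $\sum_i(a_i+1)E_i+\sum_j(1-d_j)\tilde D_j$ and argue by contradiction, whereas the paper reads off directly that $\tilde H_\infty$ has coefficient $d_i\geq 2$ or discrepancy $a_j\leq -2$; that sharper conclusion is what gets reused in Corollary \ref{cor:cyl_implies_notlc}.

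One statement in your write-up is false, though harmless. $B$ need not be supported on the whole complement of $\sigma^{-1}(Z\times_{\Bbbk}\mathbb{A}^1_{\Bbbk})$, because the boundary of the cylinder may contain divisors that are neither components of $D$ nor $\sigma$-exceptional. In particular $\tilde H_\infty$ can be the proper transform of such a divisor, which is the paper's third case. So you only get $B\cdot\ell\in\{0,1\}$, not $B\cdot\ell=1$. This still gives $(K_{\tilde X}+B)\cdot\ell\leq -1<0$, so the contradiction stands.
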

\begin{proof}
We can assume without loss of generality that $\Bbbk$ is algebraically closed. We let $U=Z\times_{\Bbbk} \mathbb{A}^1_{\Bbbk}$ be a cylinder with $Z$ non-singular contained in the complement of $\mathrm{Supp}(D)$ and we use the construction and notation of subsection \ref{subsec:log-res-cyl}. The birational section $\tilde{H}_\infty$ of $\tilde{\varphi}:\tilde{X}\to \bar{Z}$ 
is either the proper transform or an irreducible component of $D$, or the proper transform of an irreducible Weil divisor $H_\infty$ on $X$ not contained in the support of $D$, or is equal to one of the exceptional divisor $E_i$. 
The ramification formula for the $\mathbb{Q}$-Cartier divisor $K_X+D$ reads
$$ K_{\tilde{X}}+\sum_{i=1}^s d_i\tilde{D}_i=\sigma^*(K_X+D)+\sum_{j=1}^r a_jE_j.$$ For a general closed fiber $\ell\cong \mathbb{P}^1_{\Bbbk}$ of the $\mathbb{P}^1$-fibration $\tilde{\varphi}$, we have 
\begin{equation}\label{eq:ram_1}
\ell\cdot (K_{\tilde{X}}+\tilde{D})=\begin{cases}
-2+d_{i} & \textrm{if }\tilde{H}_{\infty}=\tilde{D}_{i}\textrm{ for some }i\in\{1,\ldots,s\}\\
-2 & \textrm{otherwise.}
\end{cases}
\end{equation}
On the other hand, we have 
\begin{equation} \label{eq:ram_2}
\ell\cdot (\sigma^*(K_X+D)+\sum_{i=1}^{r} a_iE_i)=(K_X+D)\cdot\sigma_*\ell+\begin{cases}
a_{j} & \textrm{if }\tilde{H}_{\infty}=E_{j}\textrm{ for some }j\in\{1,\ldots,r\}\\
0 & \textrm{otherwise.}
\end{cases}
\end{equation}
Since $(K_X+D)$ is pseudo-effective and the curves $\sigma_*\ell$ moves in an algebraic family, we have  $(K_X+D)\cdot\sigma_*\ell\geq 0$. This excludes in particular the possibility that $ \tilde{H}_\infty$ is the proper transform of a divisor on $X$ other than an irreducible component of $D$ since otherwise equations \eqref{eq:ram_1} and \eqref{eq:ram_2} would give the absurd inequality $-2\geq 0$.
So, either $\tilde{H}_\infty=\tilde{D}_i$ for some $i\in\{1,\ldots,s\}$ and then 
$d_i\geq 2$ which implies that the log pair $(X,D)$ is not log-canonical at every point of $D_i$ or $\tilde{H}_\infty=E_j$ for some $j\in\{1,\ldots,r\}$ and then  $a_j\leq -2$ 
and the log pair $(X,D)$ is not log-canonical at every point of the image $\sigma(E_i)$ of $E_i$ in $X$. 
\end{proof}    
\begin{corollary} \label{prop:cppz}\label{cor:Kpseff+lc_implies_nocylinder} A normal  projective variety with log-canonical singularities such that $K_X$ is 
pseudo-effective does not contain a cylinder.  
\end{corollary}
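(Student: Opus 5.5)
The plan is to deduce the corollary from Proposition \ref{prop:pseff_cylinder} applied with the zero divisor $D=0$. Suppose, for contradiction, that $X$ is a normal projective variety with log-canonical singularities and pseudo-effective $K_X$, and that $X$ contains a cylinder $U\cong Z\times_{\Bbbk}\mathbb{A}^1_{\Bbbk}$. The point is that no choice of polarity is involved: the proposition only asks for an effective Weil $\mathbb{Q}$-divisor whose support has complement containing a cylinder. Taking $D=0$, we have $X\setminus\mathrm{Supp}(D)=X$, which contains $U$ by assumption.

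Next I would check the remaining hypotheses of Proposition \ref{prop:pseff_cylinder} for $D=0$. Log-canonicity of $X$ includes, as part of the standard definition, that $K_X$ is $\mathbb{Q}$-Cartier. Hence $K_X+D=K_X$ is $\mathbb{Q}$-Cartier, and it is pseudo-effective by hypothesis. The proposition then says that the pair $(X,0)$ is not log-canonical. This contradicts the assumption that $X$ has log-canonical singularities, i.e.\ that $(X,0)$ is log-canonical. So $X$ contains no cylinder.

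The only point needing care is a bookkeeping one: the convention that ``log-canonical singularities'' builds in $\mathbb{Q}$-Cartierness of $K_X$. Without it, the ramification formula used in the proof of the proposition would make no sense. I would state this convention explicitly. It is also worth seeing how the argument specializes when $D=0$. The birational section $\tilde{H}_\infty$ cannot be the proper transform of a divisor on $X$, since that would give $-2\geq 0$. It therefore has to be an exceptional divisor $E_j$ with discrepancy $a_j\leq -2<-1$, which contradicts log-canonicity directly. Beyond this, I expect no real obstacle.
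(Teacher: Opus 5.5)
Your proposal is correct and is exactly the paper's argument: the paper proves the corollary in one line by applying Proposition~\ref{prop:pseff_cylinder} with $D=0$. Your remarks on the $\mathbb{Q}$-Cartier convention built into log-canonicity, and on how the discrepancy argument specializes when $D=0$, are accurate additions.
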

\begin{proof} This follows from Proposition \ref{prop:pseff_cylinder} applied to $D=\emptyset$. 
\end{proof}

\begin{remark} In Corollary \ref{cor:Kpseff+lc_implies_nocylinder}, the assumption on the singularities of $X$ cannot be weakened in general. For instance, let $Y\subset \mathbb{P}^n_{\Bbbk}$, $n\geq 2$ be any smooth hypersurface of degree $d\geq n+2$, let $X\subset \mathbb{P}^{n+1}_{\Bbbk}$ be the cone over $Y$ and let $\pi_{\{o\}}:X\dashrightarrow Y$ be the projection from the vertex $\{o\}$ of $X$. Then, by the adjunction formula, we have $$K_X=(K_{\mathbb{P}^{n+1}_{\Bbbk}}+X)|_X\sim (-n-2+d)H|_X,$$ where $H$ denotes a hyperplane of $\mathbb{P}^{n+1}_{\Bbbk}$, is Cartier and pseudo-effective (in fact, either trivial if $d=n+2$ or ample otherwise). The variety  $X$ is also cylindrical, more precisely, for every ample effective Weil $\mathbb{Q}$-divisor $D$ in $Y$, the complement in $X$ of the support of the cone $\hat{D}$ over $D$ is a cylinder, isomorphic to $(Y\setminus \mathrm{Supp}(D))\times_{\Bbbk} \mathbb{A}^1_{\Bbbk}$. 
But on the other hand, $X$ is not log-canonical at its unique singular point $\{o\}$.  Indeed, a log-resolution of $\{o\}$ is given by the blow-up $\sigma:\tilde{X}\to X$ with exceptional divisor $E\cong Y$ and in the ramification formula $K_{\tilde{X}}=\sigma^*K_X+aE$, taking intersection with a general fiber closed fiber $\ell\cong \mathbb{P}^1$ of the induced $\mathbb{P}^1$-bundle $\pi\circ \sigma:\tilde{X}\to Y$, yields $a=n-d\leq -2$.
\end{remark}

\begin{corollary} \label{cor:cyl_implies_notlc}Let $X$ be a normal projective variety and let $D$ be an effective $\mathbb{Q}$-Cartier Weil $\mathbb{Q}$-divisor $\mathbb{Q}$-linearly equivalent to $-K_X$ and such that $X\setminus\mathrm{Supp}(D)$ is a cylinder. Then the log pair $(X,D)$ is not log-canonical at a point of the support of $D$.  
\end{corollary}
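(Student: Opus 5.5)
The plan is to apply Proposition \ref{prop:pseff_cylinder} directly to the pair $(X,D)$. Since $D$ is $\mathbb{Q}$-Cartier and $D\sim_{\mathbb{Q}} -K_X$, the divisor $K_X = (K_X+D) - D$ is $\mathbb{Q}$-Cartier as well. Moreover $K_X+D\sim_{\mathbb{Q}} 0$, so $K_X+D$ is $\mathbb{Q}$-Cartier and $\mathbb{Q}$-linearly trivial, in particular pseudo-effective. The complement $X\setminus \mathrm{Supp}(D)$ is a cylinder, so it certainly contains one. All the hypotheses of Proposition \ref{prop:pseff_cylinder} are therefore met, and the pair $(X,D)$ is not log-canonical.

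It remains to locate a point of non-log-canonicity on $\mathrm{Supp}(D)$, and for this I will reread the proof of Proposition \ref{prop:pseff_cylinder} in the construction of subsection \ref{subsec:log-res-cyl}. Since $(K_X+D)\cdot \sigma_*\ell = 0$, that proof shows that the birational section $\tilde{H}_\infty$ is either a proper transform $\tilde{D}_i$ with $d_i\geq 2$, or an exceptional divisor $E_j$ with discrepancy $a_j\leq -2$. In the first case every point of $D_i\subset \mathrm{Supp}(D)$ is a non-log-canonical point.

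In the second case I must check that $\sigma(E_j)$ meets $\mathrm{Supp}(D)$. Here I use that $U:=X\setminus\mathrm{Supp}(D)$ is itself the cylinder, so $\sigma$ can be chosen to be an isomorphism over $U$, not merely over the smaller $Z\times\mathbb{A}^1$. Indeed, $U$ is normal and affine, isomorphic to $Y\times \mathbb{A}^1$, and after shrinking $Z$ the projection rational map is defined on all of $U$. A log resolution that is an isomorphism over the regular part suffices once we note the following. If $\sigma(E_j)$ were contained in $U$, then the non-log-canonical center would lie in $U$, where the pair is just $(U,0)$. We then argue that $\sigma(E_j)$ cannot lie entirely in $U$: the general fiber $\ell$ meets $\tilde H_\infty=E_j$, and $\sigma(\ell\cap \tilde H_\infty)$ is the boundary point of the $\mathbb{A}^1$-fiber closure $\overline{\{z\}\times\mathbb{A}^1}$ in $X$. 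Since $\{z\}\times\mathbb{A}^1$ is closed in the affine $U$ and $X$ is projective, that limit point lies in $X\setminus U=\mathrm{Supp}(D)$. Hence $\sigma(E_j)\cap \mathrm{Supp}(D)\neq\emptyset$, and every point of $\sigma(E_j)$ is a non-log-canonical point of $(X,D)$, as was noted at the end of the proof of Proposition \ref{prop:pseff_cylinder}.

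The main obstacle is this localization in the exceptional case. The key input is that the closure of each $\mathbb{A}^1$-fiber of the cylinder picks up its point at infinity in the boundary $\mathrm{Supp}(D)$, which is exactly where the hypothesis that the complement of $\mathrm{Supp}(D)$ is the whole cylinder is used.
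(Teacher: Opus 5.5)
Your proposal is correct and follows the paper's proof. You rerun the argument of Proposition \ref{prop:pseff_cylinder} with $K_X+D\sim_{\mathbb{Q}}0$, and in the exceptional case you use that the closure in $X$ of each $\mathbb{A}^1$-fiber of the affine cylinder has its point at infinity in $\mathrm{Supp}(D)$. The paper pushes this, by density, to the stronger inclusion $\sigma(E_j)\subset\mathrm{Supp}(D)$, whereas the nonempty intersection you obtain already suffices.

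One aside in your proposal is false, though harmless. You claim $\sigma$ can be chosen to be an isomorphism over all of $U$, or over its regular part. This fails in general, since $U$ may be singular and the indeterminacy locus of $X\dashrightarrow\bar{Z}$ may meet $U$. Your final argument only uses that $\sigma$ is an isomorphism over $Z\times\mathbb{A}^1$, so nothing depends on that claim.
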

\begin{proof}
Again, we can assume without loss of generality that the base field $\Bbbk$ is algebraically closed. Since $K_X+D\sim_{\mathbb{Q}} 0$, the argument in the proof of Proposition \ref{prop:pseff_cylinder} implies that either $\tilde{H}_\infty=\tilde{D}_i$ for some $i\in\{1,\ldots,s\}$ and then the log pair $(X,D)$ is not log-canonical at every point of $D_i$ or $\tilde{H}_\infty=E_j$ for some $j\in\{1,\ldots, r\}$ and then $\sigma(E_j)\subset X$ is a non log-canonical center of the log pair $(X,D)$. In the second case, we claim that $\sigma(E_j)\subset \mathrm{Supp}(D)$. Indeed,
since $X\setminus\mathrm{Supp}(D)\cong Y\times_{\Bbbk} \mathbb{A}^1_{\Bbbk}$ for some  variety $Y$, it follows from the definition of $\tilde{H}_\infty$ in subsection  \ref{subsec:log-res-cyl} that the composition $\sigma\circ\alpha^{-1}: \bar{Z}\dasharrow X$ induces a dominant rational map $\beta:\bar{Z}\dashrightarrow \sigma(E_j)=\sigma(\tilde{H_\infty})$ whose restriction to a Zariski dense open subset $Z$ of $Y$ associates to a closed point $z$ of $Z$ the unique intersection point of the closure in $X$ of $\mathrm{pr}_Y^{-1}(z)\cong \mathbb{A}^1_{\Bbbk}$ with $\sigma(E_j)$. But since by definition of a cylinder, $X\setminus\mathrm{Supp}(D)$ is an affine variety, which therefore does not contain any proper curve, it follows that the image of $\beta$ contains a Zariski dense open subset disjoint from $X\setminus\mathrm{Supp}(D)$, hence contained in $\mathrm{Supp}(D)$. Since $\sigma(E_i)$ is a Zariski closed subset of $X$, this implies in turn that $\sigma(E_i)\subset \mathrm{Supp}(D)$.   
\end{proof}

\subsubsection{Cylindricity and $\alpha$-invariant of Fano varieties}\label{4-2-1}
The log canonical threshold plays a central role in birational geometry. In particular, it gives rise to the notion of the $\alpha$-invariant, which has been proved to be a powerful tool in the study of K\"ahler–Einstein metrics on Fano varieties. Indeed, the $\alpha$-invariant often serves as a criterion for the existence of such metrics. Moreover, it has been proved to be a powerful tool in both affine and birational geometry. In particular, it can be used to detect the non-existence of anti-canonical polar cylinders found in Fano varieties. 
\begin{definition}\label{def:alpha}
Let $X$ be a normal projective variety such that $K_X$ is $\mathbb{Q}$-Cartier and let $H$ be a Weil $\mathbb{Q}$-Cartier $\mathbb{Q}$-divisor on $X$. The $\alpha$-invariant of the log pair $(X,H)$ is defined to be:
\begin{equation*}
    \alpha(X, H) = \sup \left\{\,\lambda \in \mathbb{Q} \,\middle|\, 
        \begin{array}{l}
            (X, \lambda D) \text{ is log canonical for all effective } \mathbb{Q}\text{-divisors } D \\
            \text{such that } D \sim_{\mathbb{Q}} H
        \end{array}
        \,\right\}.
    \end{equation*}
\end{definition}
In the case of $H=-K_X$, the invariant $\alpha(X)\coloneqq \alpha(X,-K_X)$ plays fundamental roles in various situations. A celebrated theorem of Tian \cite{Tian87} asserts that if $X$ is a smooth Fano variety of dimension $n$ satisfying: 
\begin{equation}\label{The inequality of alpha invariant}
    \alpha(X) >\frac{n}{n + 1}, 
\end{equation}
then $X$ admits a K\"ahler–Einstein metric. The $\alpha$-invariant of a Fano variety also allows to detect  the non-existence of anti-canonical polar cylinders in Fano varieties: roughly speaking, a "large" value of $\alpha$ prevents the existence of an anti-canonically polar cylinder. More precisely, we have the following: 
\begin{theorem}[{\cite[Theorem 1.26]{CPPZ}}]
\label{theorem:alpha cylinder}
    Let $X$ be a Fano variety with log-canonical singularities. If $\alpha(X)\geq 1$, then $X$ does not contain any anti-canonical polar cylinder.
\end{theorem}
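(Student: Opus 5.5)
We argue by contradiction and reduce to Corollary \ref{cor:cyl_implies_notlc}. Suppose $X$ contains an anti-canonically polar cylinder $U$. By Definition \ref{def:apc} there is an effective Weil $\mathbb{Q}$-divisor $D\sim_{\mathbb{Q}}-K_X$ with $\mathrm{Supp}(D)=X\setminus U$. We may assume $\Bbbk$ is algebraically closed, since base change preserves cylinders, polarity and log canonicity. Because $X$ is Fano with log-canonical singularities, $K_X$ is $\mathbb{Q}$-Cartier, and therefore so is $D$. Corollary \ref{cor:cyl_implies_notlc} then says that $(X,D)$ is not log-canonical at some point of $\mathrm{Supp}(D)$.

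The task is to get a contradiction with $\alpha(X)\ge 1$. The definition of $\alpha$ as a supremum only gives that $(X,\lambda D')$ is log-canonical for every $\lambda<1$ and every $D'\sim_{\mathbb{Q}}-K_X$. To pass to $\lambda=1$ we use that log canonicity is a closed condition. Fix a log resolution $\sigma$ of $(X,D)$. The discrepancies $a(E,X,\lambda D)=a(E,X,0)-\lambda\,\mathrm{mult}_E\sigma^*D$ are affine in $\lambda$, and it suffices to check log canonicity on the finitely many divisors of one log resolution. Hence the condition $a\ge -1$ holding for all $\lambda<1$ gives $a\ge -1$ at $\lambda=1$. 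So $(X,D)$ is log-canonical, which contradicts the conclusion of Corollary \ref{cor:cyl_implies_notlc}.

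The main obstacle is that Corollary \ref{cor:cyl_implies_notlc}, applied directly, does not use any slack. Non-log-canonicity of $(X,D)$ is exactly what $\alpha(X)\ge1$ rules out, so the limit argument above should be enough. The step to check carefully is that this limit really is harmless; I expect it is. If one wanted a more robust argument working with $\lambda<1$, one would go back to the proof of Proposition \ref{prop:pseff_cylinder} and apply the log-resolution construction of subsection \ref{subsec:log-res-cyl} to $\lambda D$. Then $K_X+\lambda D\sim_{\mathbb{Q}}(\lambda-1)(-K_X)$ is not pseudo-effective, and the intersection computation with a general fiber $\ell$ of $\tilde\varphi$ gives a nonpositive correction term $(1-\lambda)(-K_X\cdot\sigma_*\ell)$. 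This still forces either $\lambda d_i\ge 2-(1-\lambda)(-K_X\cdot\sigma_*\ell)$ or $a_j\le -2+\cdots$, and letting $\lambda\to1$ recovers the same contradiction. In either form, the conclusion is that no anti-canonically polar cylinder exists.
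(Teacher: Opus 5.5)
Your proof is correct and follows the paper's route. The paper likewise gets non-log-canonicity of $(X,D)$ from the ramification argument, via Proposition \ref{prop:pseff_cylinder} with $K_X+D\sim_{\mathbb{Q}}0$, of which Corollary \ref{cor:cyl_implies_notlc} is a refinement; it then simply says this ``amounts to'' $\alpha(X)<1$, and your argument that discrepancies on a fixed log resolution are affine in $\lambda$ is precisely the justification of that step the paper leaves implicit, so your closing $\lambda<1$ variant is unnecessary.
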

\begin{proof} By Proposition \ref{prop:pseff_cylinder}, if, for an effective Weil $\mathbb{Q}$-divisor $D\sim_{\mathbb{Q}} -K_X$, the corresponding complement $X\setminus \mathrm{Supp}(D)$ is a cylinder, then the log pair $(X,D)$ is not log-canonical, which amounts to $\alpha(X)<1$. 
\end{proof}

\begin{example}
Let $X\subset \mathbb{P}^n$ be a smooth hypersurface of degree $n$. If $n \geq 6$ and $X$ is general, then $\alpha(X)\geq 1$ (see {\cite[Theorem 2]{Pu05}}). Therefore, by Theorem \ref{theorem:alpha cylinder}, $X$ does not contain any anti-canonical polar cylinder. For $n=3$, however, we have $\alpha(X)<1$. For $n=4$ (resp. $n=5$) the known lower bound is $\alpha(X)\geq \frac{7}{9}$ (resp. $\alpha(X)\geq \frac{5}{6}$) (see \cite{CPW14}). These bounds are not enough to determine whether $X$ is non-cylindrical by means of Theorem \ref{theorem:alpha cylinder}, so different arguments are required. 
More precisely, see \cite{CPW0} for $n=3$, meanwhile all smooth quartic threefolds and all smooth quintic fourfolds are known to be birationally superrigid (see \cite{IM}, \cite{Pu87}, \cite{TF13} and \cite{TF16}), which implies the non-cylindricity.     
\end{example}

\subsection{$K$-stability and anti-canonical polar cylindricity of Fano varieties}

The $\alpha$-invariant $\alpha(X)$ of a Fano variety $X$ has a strong connection with another invariant, called its $\delta$-invariant $\delta(X)$, which plays a central role in the study of $K$-stability of Fano varieties.  We only quote the following result, referring the reader to \cite{ACCF23} and \cite{BJ20} for comprehensive discussion on $K$-stability of Fano varieties and $\delta$-invariants.
\begin{theorem}[{\cite[Theorem B]{BJ20}}]\label{thm:BJ} For a Fano variety $X$ with Kawamata log terminal singularities, the following hold:
    \begin{itemize}
        \item $X$ is $K$-semistable if and only if $\delta(X)\geq 1$;
        \item $X$ is $K$-stable if and only if $\delta(X)>1$.
    \end{itemize}
\end{theorem}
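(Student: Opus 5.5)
The plan is to reduce both equivalences to the valuative criteria for $K$-stability due to Fujita and Li, by identifying $\delta(X)$ with a valuative threshold.

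For a prime divisor $E$ over $X$, i.e.\ $E\subset Y\xrightarrow{\pi}X$ with $Y$ normal and $\pi$ proper birational, let $A_X(E)$ denote the log discrepancy. Let
$$S(E)=\frac{1}{\mathrm{vol}(-K_X)}\int_0^\infty \mathrm{vol}(\pi^*(-K_X)-tE)\,dt$$
be the expected vanishing order. For $m$ divisible enough, recall that $\delta_m(X)$ is the infimum of $\mathrm{lct}(X,D)$ over $m$-basis type divisors $D\sim_{\mathbb Q}-K_X$, and that $\delta(X)=\limsup_m\delta_m(X)$.

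\textbf{Step 1: valuative formula for $\delta$.} First I show
$$\delta(X)=\inf_E \frac{A_X(E)}{S(E)}.$$
For a fixed $E$, I choose a basis of $H^0(X,-mK_X)$ compatible with the filtration by vanishing order along $E$. The resulting basis type divisor $D_m$ has $\mathrm{ord}_E(D_m)=S_m(E)$, which is maximal among basis type divisors. Hence
$$\delta_m(X)=\inf_E \frac{A_X(E)}{S_m(E)}.$$
I then pass to the limit, using that $S_m(E)\to S(E)$. This uses Okounkov bodies and concavity of the jumping function, and the point is to obtain the uniform estimate $S_m(E)\le (1+\varepsilon_m)S(E)$ with $\varepsilon_m\to 0$ independent of $E$. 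This is the technical heart of the argument. I would try to derive the estimate from the uniform bound $S(E)\ge c\,T(E)$, where $c$ depends only on $\dim X$ and $T$ is the pseudo-effective threshold, together with a comparison of the discrete and continuous filtrations. As a consequence of the uniform estimate, the limsup is a limit, and it equals the valuative infimum.

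\textbf{Step 2: first equivalence.} By the criterion of Fujita and Li, $X$ is $K$-semistable if and only if
$$\beta(E)=A_X(E)-S(E)\ge 0$$
for all $E$ over $X$. By Step 1 this condition is exactly $\delta(X)\ge 1$.

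\textbf{Step 3: second equivalence.} Fujita's criterion for uniform $K$-stability requires $\beta(E)\ge \varepsilon A_X(E)$ for all $E$ and some fixed $\varepsilon>0$. This inequality is equivalent to
$$\frac{A_X(E)}{S(E)}\ge \frac{1}{1-\varepsilon}.$$
So uniform $K$-stability is equivalent to $\delta(X)>1$. It then remains to identify $K$-stability with uniform $K$-stability for klt Fano varieties. One direction is formal. For the converse, assume $\delta(X)=1$ and $X$ is $K$-stable, and aim for a contradiction. By the work of Liu, Xu and Zhuang, $\delta$ is computed by some divisor $E$ which is moreover dreamy, i.e.\ its associated graded ring is finitely generated. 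This $E$ induces a nontrivial special test configuration with Futaki invariant $\beta(E)=0$, contradicting $K$-stability.

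I expect the main obstacle to be the existence of such a computing divisor with finitely generated graded ring. This is the deep input, relying on boundedness of complements and finite generation results. It is precisely what is needed to upgrade Blum and Jonsson's original statement, which concerns uniform $K$-stability, to the form quoted in Theorem \ref{thm:BJ}.
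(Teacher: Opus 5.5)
Your proposal is correct. The paper itself gives no argument beyond the citation. Your Steps 1--2 are exactly Blum--Jonsson's proof: the valuative formula $\delta(X)=\inf_E A_X(E)/S(E)$, obtained from the uniform estimate $S_m\le(1+\varepsilon_m)S$ via Okounkov bodies and $T\le(\dim X+1)S$, combined with the Fujita--Li criteria. You are also right that Blum--Jonsson's Theorem B is literally a statement about \emph{uniform} $K$-stability. The second bullet as the paper states it therefore needs the Liu--Xu--Zhuang input you supply in Step 3, which applies since $\delta(X)=1<\frac{\dim X+1}{\dim X}$, together with Fujita's result that $K$-stability forces $\beta(E)>0$ for every dreamy divisor $E$.
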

By \cite[Theorem A]{BJ20}, one has the following inequalities between the $\alpha$-invariant and the $\delta$-invariant of a Fano variety $X$ with Kawamata log terminal singularities:
\begin{equation}\label{eq:alpha_delta}
    \left(\frac{\dim X + 1}{\dim X}\right)\alpha(X) \leq \delta(X) \leq (\dim X + 1)\alpha(X).
\end{equation}

In view of these inequalities, a "large enough" value of $\alpha$ is simultaneously an obstruction for the existence of an anti-canonical polar cylinder in a Fano variety $X$ by Theorem \ref{theorem:alpha cylinder} and a sufficient condition for the $K$-semistability of $X$ by Theorem \ref{thm:BJ}. For instance, one has the following consequence of Theorem \ref{theorem:alpha cylinder} and inequality \eqref{eq:alpha_delta}:

\begin{corollary}
    Let $X$ be a Fano variety with Kawamata log terminal singularities. If $\alpha(X)\geq 1$ then $X$ is $K$-stable and does not contain any anti-canonical polar cylinder.
\end{corollary}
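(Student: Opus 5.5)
The corollary combines two earlier facts, and the plan is to establish its two conclusions separately.

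For non-cylindricity: a Fano variety with Kawamata log terminal singularities is in particular log-canonical. Theorem \ref{theorem:alpha cylinder} then applies directly under the hypothesis $\alpha(X)\geq 1$. It gives that $X$ contains no anti-canonical polar cylinder. Nothing further is needed for this half.

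For $K$-stability, I will use the left-hand inequality in \eqref{eq:alpha_delta}, valid for klt Fano varieties by \cite[Theorem A]{BJ20}. Writing $n=\dim X\geq 1$, it gives
$$\delta(X)\geq \frac{n+1}{n}\,\alpha(X)\geq \frac{n+1}{n}>1.$$
The strictness here comes from the factor $\frac{n+1}{n}$ being strictly greater than $1$. By the second item of Theorem \ref{thm:BJ}, $\delta(X)>1$ is equivalent to $K$-stability of $X$, so $X$ is $K$-stable.

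The only point requiring care is checking that the hypotheses of the cited results are met. Theorem \ref{theorem:alpha cylinder} needs log-canonical singularities, which follow from klt. Theorem \ref{thm:BJ} and \eqref{eq:alpha_delta} need klt and $-K_X$ ample $\mathbb{Q}$-Cartier, which hold by the definition of a Fano variety. I also need $\dim X\geq 1$ so that $\frac{n+1}{n}$ is defined; the zero-dimensional case is trivial or excluded by convention. I do not anticipate any genuine obstacle.
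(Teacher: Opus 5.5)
Your proposal is correct and matches the paper's argument, which presents the statement as a direct consequence of Theorem~\ref{theorem:alpha cylinder} and inequality~\eqref{eq:alpha_delta}. You get non-cylindricity from Theorem~\ref{theorem:alpha cylinder}, since klt implies log-canonical, and $K$-stability from $\delta(X)\geq \frac{n+1}{n}\alpha(X)>1$ together with Theorem~\ref{thm:BJ}.
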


On the other hand, it is a natural expectation that an anti-canonical polar cylinder in a Fano variety $X$ could yield a destabilizing test configuration for $X$, thereby obstructing its $K$-(poly)stablity.  This motivated the following conjectural connection:
\begin{conjecture}[{\cite[Conjecture 1.32]{CPPZ}}]\label{Conj:CPPZ}
Let $X$ be a Fano variety that has at most Kawamata log terminal singularities.
If $X$ does not contain $(-K_X)$-polar cylinders, then $X$ is $K$-polystable.
\end{conjecture}
It is known nowadays this conjecture fails \cite{KKW24}, see subsection \ref{subsec:NonCyl-dP}  below for examples of del Pezzo surfaces with quotient singularities which do not contain anti-canonical polar cylinders but are nevertheless $K$-unstable.

\section{Weighted complete intersections in weighted projective spaces}\label{section2} 
\subsection{Weighted projective spaces: definition and basic properties}\label{2-1}
We begin by recalling the basic definitions of several notions related to weighted projective spaces. For further details, the reader is referred to \cite{Dol81} and \cite{IF00}. Let $\Bbbk$ be a field of characteristic zero, which is not necessarily algebraically closed throughout this section. 

In what follows, we consider the polynomial ring $\Bbbk [x_0, \cdots , x_n]$ in $(n+1)$-variables over $\Bbbk$ as endowed with a structure of $\mathbb{N}$-graded $\Bbbk$-algebra determined by assigning the weight $a_i >0$ to each variable $x_i$ for $0 \leqq i \leqq n$. In order to distinguish it from the usual (non-graded) polynomial ring, we denote by
\[
\Bbbk (a_0, \ldots , a_n)=\bigoplus_{m\geq 0}\Bbbk(a_0,\ldots,a_n)_m 
\]
the $\Bbbk$-algebra $\Bbbk [x_0, \cdots , x_n]$ considered as a graded $\Bbbk$-algebra for the above $\mathbb{N}$-grading. The weighted projective space associated to this choice of grading is then defined as the $\Bbbk$-scheme: 
\[
\PP_{\Bbbk} (a_0, \ldots , a_n) \coloneqq {\rm Proj}_{\Bbbk} \Bbbk (a_0, \ldots , a_n). 
\]

Under the well known one-to-one correspondence between $\mathbb{Z}$-gradings on a $\Bbbk$-algebra $A$ of finite type and algebraic actions of the multicative $\Bbbk$-groups scheme $\mathbb{G}_{m,\Bbbk}={\rm Spec}(\Bbbk[t^{\pm 1}])$ on its spectrum, the above grading corresponds to a linear $\mathbb{G}_{m,\Bbbk}$-action $\sigma$ on $\mathbb{A}^{n+1}_{\Bbbk}$ given by the co-morphism \[\sigma^*: \Bbbk[x_0,\ldots, x_n]\to \Bbbk[x_0,\ldots, x_n]\otimes_{\Bbbk} \Bbbk[t^{\pm 1}]\cong \Bbbk[x_0,\ldots, x_n][t^{\pm 1}], \; x_i\mapsto x_it^{a_i}, \quad i=0,\ldots, n \]
and the weighted projective space $\mathbb{P}(a_0,\ldots, a_n)$ identifies with the GIT geometric quotient: 
\begin{equation*}
    \PP(a_0,\ldots,a_n) = (\mathbb{A}_{\Bbbk}^{n+1}\setminus \{0\})/\mathbb{G}_{m, \Bbbk}
\end{equation*}
of the open subset $(\mathbb{A}^{n+1}_{\Bbbk})^{ss}(\mathcal{O}_{\mathbb{A}_{\Bbbk}^{n+1}})=\mathbb{A}_{\Bbbk}^{n+1}\setminus \{0\}$ of semi-stable points of $\mathbb{A}_{\Bbbk}^{n+1}$ with respect to $\mathcal{O}_{\mathbb{A}_{\Bbbk}^{n+1}}$ endowed with the natural $\mathbb{G}_{m,\Bbbk}$-linearization. 

The homomorphism of graded $\Bbbk$-algebras: $$\Bbbk[x_0,\ldots,x_n]\to \Bbbk[z_0,\ldots,z_n], \quad x_i\mapsto z_i^{a_i},$$
where each $z_i$ has weight $1$, induces an isomorphism between $\Bbbk[x_0,\ldots,x_n]$ and the subring of invariants of $\Bbbk[z_0,\ldots,z_n]$ for the linear diagonal action of the product $\mu=\mu_{a_1}\times \cdots \mu_{a_n}$ of the finite $\Bbbk$-group schemes $\mu_{a_i}={\rm Spec} \, \Bbbk[t]/(t^{a_i}-1)$, $i=0,\ldots, n$. One deduces in turn that $\PP(a_0,\ldots,a_n)$ is isomorphic to the geometric quotient of $\PP(1,\ldots,1)=\PP^n$ by the induced diagonal action of $\mu$. It follows, among other properties, that $\PP(a_0,\ldots, a_n)$ is an irreducible normal projective variety, with at worse cyclic quotient singularities, and with class group $\mathrm{Cl}(\PP(a_0,\ldots, a_n))$ isomorphic to $\mathbb{Z}$, in particular $\PP(a_0,\ldots, a_n)$ is $\mathbb{Q}$-factorial.   

\begin{notation} When there is no risk of ambiguity, we will omit the subscript $\Bbbk$ in $\PP_{\Bbbk}(a_0, \cdots , a_n)$. On the other hand, to emphasize the choice of coordinates, we write 
\begin{equation*}
    \mathbb{P}(a_0, \ldots, a_n)_{[x_0:\cdots :x_n]}
\end{equation*}
for the weighted projective space with homogeneous coordinates $x_0,\ldots, x_n$ of weights $\mathrm{wt}(x_i)=a_i$.
\end{notation}

\medskip

We now collect additional basic notions and properties concerning weighted projective spaces.

\begin{lemma}[{\cite[$\S$ 5]{IF00}}]\label{lem:wps-reductions}\label{rem:well-formed}\label{def:well-formed}For a weighted projective space $\PP=\PP(a_0, \ldots, a_n)$, the following hold:

\begin{enumerate}

\item\label{lemma:gcd of weights} 
$\PP$ is isomorphic to $\PP(qa_0, \ldots, qa_n)$ for every positive integer $q$,.

\item\label{def:well-formed} 
$\PP$ is isomorphic to $\PP(b_0,\ldots,b_n)$ for suitable positive integers 
$(b_0,\ldots, b_n)$ such that $$\gcd \, (b_0, \ldots , b_{i-1}, \check{b_i}, b_{i+1}, \ldots , b_n) =1 \quad  \forall i=0,\ldots,n.$$

\end{enumerate}

\end{lemma}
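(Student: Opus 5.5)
Both assertions reduce to explicit identifications of graded rings. For a graded ring $A=\bigoplus_m A_m$ and a positive integer $d$, write $A^{(d)}=\bigoplus_m A_{dm}$ for its $d$-th Veronese subring. I would rely on the standard fact that $\mathrm{Proj}\,A\cong \mathrm{Proj}\,A^{(d)}$: the two schemes are covered by the same affine charts $\mathrm{Spec}\,A_{(f)}$ with $f$ homogeneous of degree divisible by $d$, and these charts have equal degree-zero localizations.

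\emph{Part (1).} The graded rings $\Bbbk(qa_0,\ldots,qa_n)$ and $\Bbbk(a_0,\ldots,a_n)$ have the same underlying ring $\Bbbk[x_0,\ldots,x_n]$. The first is concentrated in degrees divisible by $q$, and its degree $qm$ piece equals the degree $m$ piece of the second. Regrading by dividing all degrees by $q$ therefore does not change the degree-zero localizations $\Bbbk[x]_{(f)}$. Hence the two Proj's coincide, and (1) follows at once.

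\emph{Part (2).} By (1) we may first divide out $\gcd(a_0,\ldots,a_n)$ and assume it equals $1$. Suppose $d=\gcd(a_1,\ldots,a_n)>1$, where by symmetry the omitted weight is $a_0$; then $\gcd(d,a_0)=1$. I claim that
\[
\PP(a_0,a_1,\ldots,a_n)\cong \PP(a_0,a_1/d,\ldots,a_n/d).
\]
To prove this, pass to the Veronese subring $A^{(d)}$ with $A=\Bbbk(a_0,\ldots,a_n)$. A monomial $x_0^{e_0}x_1^{e_1}\cdots x_n^{e_n}$ has degree divisible by $d$ exactly when $d\mid e_0a_0$, that is, when $d\mid e_0$. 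So $A^{(d)}=\Bbbk[x_0^d,x_1,\ldots,x_n]$, which is a polynomial ring in which $x_0^d$ has degree $da_0$ and each $x_i$ with $i\geq 1$ has degree $a_i$. Dividing all degrees by $d$, as permitted by (1), gives the weights $(a_0,a_1/d,\ldots,a_n/d)$, which proves the claim.

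Each such step strictly decreases $\sum a_i$, so iterating the reduction over all indices $i$ terminates. At termination every $n$ of the weights are coprime, as required. The only step needing care is checking that the reduction at index $i$ keeps the total gcd equal to $1$. This holds because $\gcd(d,a_i)=1$, so the new total gcd $\gcd(a_i,a_j/d)_{j\neq i}$ divides $\gcd(a_0,\ldots,a_n)=1$.
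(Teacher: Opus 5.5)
Your proof is correct. Part (1) is essentially the paper's argument; the paper phrases it as the identity map being a degree-$0$ isomorphism between $\Bbbk(a_0,\ldots,a_n)$ and the $q$-th Veronese subring of $\Bbbk(qa_0,\ldots,qa_n)$.

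For part (2) you take a genuinely different route. You remove the common factor $d=\gcd(a_j : j\neq i)$ one index at a time. Each step uses only the easy computation $A^{(d)}=\Bbbk[x_i^d, x_j : j\neq i]$, which is where $\gcd(d,a_i)=1$ is genuinely needed. You then conclude by termination, since $\sum a_j$ strictly drops.

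The paper instead performs a single simultaneous reduction. It sets $d_i=\gcd(a_j : j\neq i)$, $e_i=\mathrm{lcm}(d_j : j\neq i)$ and $e=\mathrm{lcm}(d_0,\ldots,d_n)$. It then identifies $\Bbbk[x_0^{d_0},\ldots,x_n^{d_n}]$ with the $e$-th Veronese subring and reads off the well-formed weights $b_i=a_i/e_i$ directly.

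What each approach buys:
\begin{itemize}
\item Your version is more elementary and self-contained. It avoids the arithmetic facts the paper asserts without proof: $e_i\mid a_i$, $\gcd(e_i,d_i)=1$, $e_id_i=e$, and well-formedness of the $b_i$.
\item The paper's version gives an explicit closed formula for the new weights and a one-step chart identification $\mathbb{D}_+(x_i)\cong\mathbb{D}_+(x_i^{d_i})$. It reuses this in the proof of Corollary~\ref{cor:wps-principal_cyl}.
\end{itemize}
The two constructions agree. The $d_i$ are pairwise coprime, so one pass of your iteration through all indices already produces exactly the paper's $b_i$.

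One minor point: keeping the total gcd equal to $1$ does not need $\gcd(d,a_i)=1$. It holds simply because each $a_j/d$ divides $a_j$.
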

\begin{proof} 
The first assertion follows from the facts that the identity homomorphism of $\Bbbk[x_0,\ldots,x_n]$ is an isomorphism of graded $\Bbbk$-algebras of degree $0$ between $\Bbbk(a_0,\ldots,a_n)$ and the $q$-th Veronese sub-algebra 
$$\Bbbk(qa_0,\ldots,qa_n)^{(q)}=\bigoplus_{m\geq 0} (\Bbbk(qa_0,\ldots,qa_n)^{(q)})_{m}=\bigoplus_{m\geq 0}\Bbbk(qa_0,\ldots,qa_n)_{mq}$$
of $\Bbbk(qa_0,\ldots,qa_n)$ and that the inclusion $\Bbbk(qa_0,\ldots,qa_n)^{(q)}\subset \Bbbk(qa_0,\ldots,qa_n)$ induces an isomorphism between the corresponding Proj's.

For the second assertion, taking assertion (1) into account we can assume without loss of generality that $\gcd(a_0,\ldots, a_n)=1$. For every $i=0,\ldots, n$, let
  \begin{equation*}
      d_i = \gcd(a_0,\ldots,a_{i-1},a_{i+1},\ldots,a_n) \quad \textrm{and} \quad  e_i = \mathrm{lcm}(d_0,\cdots d_{i-1},d_{i+1},\cdots d_n).
  \end{equation*}
Then $e_i$ divides $a_i$, $\gcd(e_i,d_i)=1$ and $e_id_i=\mathrm{lcm}(d_0,\ldots, d_n)=:e$. Moreover, the positive integers $b_i:=a_i/e_i$ satisfy $\gcd \, (b_0, \ldots , b_{i-1}, \check{b_i}, b_{i+1}, \ldots , b_n) =1$ for all $i=0,\ldots,n$. Let $\Bbbk[y_0,\ldots,y_n]\subset \Bbbk[x_0,\ldots,x_n]$ be the polynomial $\Bbbk$-subalgebra generated by the elements $y_i=x_i^{d_i}$, $i=0,\ldots, n$. Assigning the weight $eb_i=e_id_ib_i=a_id_i$ to $y_i$  identifies $\Bbbk[y_0,\ldots,y_n]=\Bbbk(eb_0,\ldots,eb_1)$ to the $e$-th Veronese sub-ring $\Bbbk(a_0,\ldots,a_n)^{(e)}$ of $\Bbbk(a_0,\ldots, a_n)$. Since on the other hand, we have: $\Bbbk(eb_0,\ldots,eb_1)^{(e)}=\Bbbk(b_0,\ldots,b_n)$, passing to the respective Proj's then gives the  following chain of isomorphisms: 
\[
\xymatrix{ \mathbb{P}(a_0,\ldots a_n) \ar[d]_{\cong} & &  \mathbb{P}(b_0,\ldots,b_n) \ar@{=}[d] \\ 
 \mathrm{Proj}(\Bbbk(a_0,\ldots,a_n)^{(e)})  
 \ar[r]^-{\cong} &  \mathbb{P}(eb_0,\ldots, eb_n) \ar[r]^-{\cong} &\mathrm{Proj}(\Bbbk(eb_0,\ldots,eb_n)^{(e)})  
}
\]
\end{proof}
Lemma \ref{lem:wps-reductions} (1) says in particular that one can always normalize the choice of weights in the description of a weighted projective space so that their greatest common divisor is equal to one. 
A description of a weighted projective $\PP(a_0,\ldots, a_n)$ in which the weights satisfy the condition
\[
  \gcd \, (a_0, \ldots , a_{i-1}, \check{a_i}, a_{i+1}, \ldots , a_n) =1 \quad (\forall \; i=0,\ldots, n)
  \]
is called {\it well-formed}. Lemma \ref{lem:wps-reductions} (2) says that every weighted projective space is isomorphic to a well-formed one.
There are several reasons to introduce the notion of being well-formed. For example, the usual adjunction formula for complete intersection in projective spaces extends for well-formed weighted completed intersections in (well-formed) weighted projective spaces, see Theorem \ref{theorem:adjuction formula} below. This also allows to give a purely arithmetic description of the singular loci of weighted projective spaces: 

\begin{proposition}[{\cite[5.15]{IF00}}]\label{rem:sing} The support of the singular locus $ \Sing\bigl(\PP(a_0,\ldots,a_n)\bigr)$ of a well-formed weighted projective space $\mathbb{P}(a_0,\ldots,a_n)$ is the union of the closed subsets 
    \begin{equation*}
        \Delta_I = \bigcap_{j\not\in I}\{x_j=0\}
    \end{equation*} where $I$ runs through the non-empty subsets $I\subset \{0,\ldots,n\}$ such that $\gcd\{a_i\mid i\in I\}>1$.
\end{proposition}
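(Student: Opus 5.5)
We will compute the local structure of $\PP=\PP(a_0,\ldots,a_n)$ on the standard affine charts and read off singularities from the isotropy groups. We may assume $\Bbbk$ algebraically closed, since the singular locus commutes with base change to $\bar{\Bbbk}$ in characteristic zero and the sets $\Delta_I$ are defined over $\Bbbk$. For each $i$, the open set $U_i=\{x_i\neq 0\}$ is $\mathrm{Spec}$ of the degree-zero part of $\Bbbk[x_0,\ldots,x_n][x_i^{-1}]$. Setting $x_i=1$, this identifies $U_i$ with the quotient $\mathbb{A}^n/\mu_{a_i}$, where $\mu_{a_i}$ acts on the coordinates $x_j$, $j\neq i$, with weights $a_j$. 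More intrinsically, a point $p=[x]$ has isotropy group $G_p=\mu_{g}$ in $\mathbb{G}_m$, where $g=\gcd\{a_j\mid x_j\neq 0\}$. By the slice theorem (Luna étale slice, applicable since orbits in $\mathbb{A}^{n+1}\setminus\{0\}$ are closed with finite stabilizers), $\PP$ is étale-locally at $p$ isomorphic to $\mathbb{A}^n/G_p$ at the origin, with $G_p=\mu_g$ acting linearly on the normal space $N_p$ to the orbit. Explicitly, $N_p$ is spanned by the coordinate directions $x_j$, $j\neq i$, for some fixed $i$ with $x_i\neq 0$, and $\mu_g$ acts on them with characters $\zeta\mapsto \zeta^{a_j}$.

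The key step is then the Chevalley–Shephard–Todd theorem: $\mathbb{A}^n/G$ is smooth at the origin if and only if $G$ is generated by pseudo-reflections. We therefore need to show that, under well-formedness, $\mu_g$ with $g>1$ contains no pseudo-reflection. Note that a subgroup of $\mu_g$ is generated by pseudo-reflections only if it contains one, since a cyclic group generated by pseudo-reflections acting diagonally on $N_p$ contains one. An element $\zeta\in\mu_g$, $\zeta\neq 1$, acting on $N_p$ is a pseudo-reflection iff $\zeta^{a_j}=1$ for all $j\neq i$ except exactly one index $k$. Suppose $\zeta$ has order $m>1$ with $m\mid g$. Then $m\mid a_i$, since $x_i\neq0$. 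It also divides all $a_j$ with $j\neq i,k$, because $\zeta^{a_j}=1$ on those directions. Hence $m$ divides $\gcd\{a_j\mid j\neq k\}=1$ by well-formedness, a contradiction. So $\mu_g$ contains no pseudo-reflections. Consequently $\mathbb{A}^n/\mu_g$ is singular at the origin exactly when $g>1$, and it is smooth when $g=1$.

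Finally we translate this back to the sets $\Delta_I$. A point $p$ lies in $\Delta_I$ iff its support $\{j\mid x_j\neq 0\}$ is contained in $I$. If $p\in\Delta_I$ with $\gcd\{a_i\mid i\in I\}>1$, then $g(p)$ is divisible by that gcd, so $g(p)>1$ and $p$ is singular. Conversely, if $p$ is singular then $g(p)>1$, and $p\in\Delta_{I}$ for $I=\mathrm{supp}(p)$, which is nonempty with $\gcd>1$. We expect the main obstacle to be justifying the étale-local identification with $N_p/G_p$ and the precise weights on $N_p$. To handle it cleanly, we would avoid Luna: work on the chart $U_i\cong\mathbb{A}^n/\mu_{a_i}$ and apply the standard fact that $V/G$ near the image of $v$ is étale-locally $V/G_v$, with $G_v$ the stabilizer of $v$ in $\mu_{a_i}$, which is exactly $\mu_{g}$ acting on $V=\mathbb{A}^n$ translated to $v$.
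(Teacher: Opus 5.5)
Your proof is correct and is essentially the standard argument behind \cite[5.15]{IF00}; the paper cites that result without giving its own proof. The argument goes through cyclic-quotient charts $\mathbb{A}^n/\mu_{a_i}$, the local group $\mu_g$ at $p$ with $g=\gcd\{a_j\mid x_j\neq 0\}$, and Chevalley--Shephard--Todd, with well-formedness excluding pseudo-reflections exactly as you show.

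One step needs to be made explicit, because your sentence about subgroups ``generated by pseudo-reflections'' is false for the trivial group, which is vacuously generated by them. For $g>1$ you must check that the image of $\mu_g$ in $\mathrm{GL}(N_p)$ is nontrivial. This holds: an element of order $m>1$ acting trivially on $N_p$ would give $m\mid a_j$ for all $j\neq i$, and also $m\mid g\mid a_i$, hence $m\mid\gcd(a_0,\ldots,a_n)=1$.
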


The {\it canonical sheaf} of a weighted projective space $\PP=\PP(a_0,\ldots, a_n)$ is the coherent reflexive sheaf of rank one defined to be $\omega_{\PP}:=i_* \omega_{\PP_{\mathrm{reg}}}$, where $\PP_{\mathrm{reg}}=\PP\setminus \mathrm{Sing}(\PP)$ is the regular locus of $\PP$, $i:\PP_{\mathrm{reg}}\hookrightarrow \PP$ is the open inclusion and  $\omega_{\PP_{\mathrm{reg}}}=\Lambda^n\Omega_{\PP_{\mathrm{reg}}/\Bbbk}$ is the canonical invertible sheaf of the regular scheme $\PP_{\mathrm{reg}}$. Any integral Weil divisor $K_{\PP}$ on $\PP$ such that $\omega_{\PP}\cong \mathcal{O}_{\PP}(K_{\PP})$ is called a canonical Weil divisor of $\PP$. For a well-formed expression $\PP=\PP(b_0,\ldots,b_n)_{[x_0:\ldots:x_n]}$ of $\PP$, the Weil divisor $-\sum_{i=0}^{n}H_{x_i}$, where $H_{x_i}=\{x_i=0\}\subset \PP$, is a canonical divisor $K_{\PP}$ and hence, we have an isomorphism of rank one reflexive sheaves $\omega_{\PP}\cong \mathcal{O}_\PP(-\sum_{i=0}^n b_i)$. 

\subsection{Cylindricity of weighted projective spaces}\label{2-2}
In this subsection, we observe the cylindricity of weighted projective spaces. 
\begin{proposition}\label{prop:adrien}
Let $\PP\coloneqq \PP(a_{0},\ldots,a_{n})_{[x_0:\cdots:x_n]}$ be a 
 weighted projective space and let $\{i_0,\ldots, i_m\}$ be a subset of indices such that $\gcd(a_{i_0},\ldots, a_{i_m})=1$. Then the affine open subset ${\mathbb D}_+(x_{i_0}\cdots x_{i_m})$ of $\mathbb{P}$ is isomorphic to $(\mathbb{A}_{\Bbbk}^1\setminus\{0\})^{m}\times \mathbb{A}_{\Bbbk}^{n-m}$.   
\end{proposition}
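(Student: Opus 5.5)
The plan is to compute the coordinate ring of ${\mathbb D}_+(x_{i_0}\cdots x_{i_m})$ directly. By definition of $\mathrm{Proj}$, it is the degree-zero part $R_{(f)}=(R_f)_0$ of the graded localization $R_f$, where $R=\Bbbk(a_0,\ldots,a_n)$ and $f=x_{i_0}\cdots x_{i_m}$. Write $I=\{i_0,\ldots,i_m\}$ and $J=\{0,\ldots,n\}\setminus I$, so that $|J|=n-m$. The graded ring $R_f$ is
$$R_f=\Bbbk[x_{i_0}^{\pm1},\ldots,x_{i_m}^{\pm1}]\otimes_{\Bbbk}\Bbbk[x_j\mid j\in J],$$
where the first factor is the group algebra $\Bbbk[\mathbb{Z}^{m+1}]$ of the lattice of Laurent monomials in the $x_{i}$, $i\in I$. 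Its grading is induced by the degree homomorphism $\deg:\mathbb{Z}^{m+1}\to\mathbb{Z}$, $(c_0,\ldots,c_m)\mapsto\sum_k c_ka_{i_k}$.

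The key step is to use the hypothesis $\gcd(a_{i_0},\ldots,a_{i_m})=1$, which says precisely that $\deg$ is surjective. Hence $\deg$ splits: by B\'ezout there are integers $c_k$ with $\sum_k c_ka_{i_k}=1$. The Laurent monomial $u=\prod_k x_{i_k}^{c_k}$ is then a unit of degree $1$ in $R_f$. Moreover $L=\ker(\deg)$ is a free abelian group of rank $m$, being a subgroup of $\mathbb{Z}^{m+1}$ with quotient $\mathbb{Z}$, and we get $\mathbb{Z}^{m+1}=L\oplus\mathbb{Z}\cdot(c_0,\ldots,c_m)$. Choosing a basis $\lambda_1,\ldots,\lambda_m$ of $L$ and letting $t_1,\ldots,t_m$ be the corresponding Laurent monomials, we obtain $\Bbbk[\mathbb{Z}^{m+1}]=\Bbbk[t_1^{\pm1},\ldots,t_m^{\pm1}]\otimes_{\Bbbk}\Bbbk[u^{\pm1}]$, with the $t_l$ of degree $0$.

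Next, for $j\in J$ set $y_j=x_ju^{-a_j}$, which has degree $0$. Since $x_j=y_ju^{a_j}$, this substitution is an invertible change of variables, and
$$R_f=\Bbbk[t_1^{\pm1},\ldots,t_m^{\pm1}]\otimes_{\Bbbk}\Bbbk[y_j\mid j\in J]\otimes_{\Bbbk}\Bbbk[u^{\pm1}],$$
where every generator except $u$ has degree $0$. A monomial in these generators is homogeneous of degree equal to its exponent in $u$. Taking the degree-zero part therefore gives
$$R_{(f)}=\Bbbk[t_1^{\pm1},\ldots,t_m^{\pm1}][y_j\mid j\in J],$$
whose spectrum is $(\mathbb{A}^1_\Bbbk\setminus\{0\})^m\times\mathbb{A}^{n-m}_\Bbbk$, as claimed.

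The only point needing care is the splitting of the lattice $\mathbb{Z}^{m+1}$, and the coprimality hypothesis supplies exactly this. Note that no well-formedness of the weights is needed, and the argument works over any field $\Bbbk$.
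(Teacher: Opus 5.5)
Your proof is correct. It shares its first move with the paper's proof. B\'ezout produces a Laurent monomial of degree $1$ (your $u$, the paper's $h$) that trivializes the grading, and the substitution $x_j=y_ju^{a_j}$ supplies the polynomial coordinates.

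Where you differ is in identifying the torus factor. The paper's route has three steps:
\begin{enumerate}
\item It sets $y_i=x_ih^{-a_i}$ for \emph{every} $i$, including $i\in I$, and notes that these generate $A_0$ subject to the relation $\prod_{i\in I}y_i^{b_i}=1$.
\item It uses a Krull-dimension count to conclude that this is the only relation.
\item It completes the primitive vector $(b_0,\ldots,b_m)$ to a matrix in $\mathrm{GL}_{m+1}(\mathbb{Z})$, which identifies $\Bbbk[Y_0^{\pm1},\ldots,Y_m^{\pm1}]/(\prod Y_i^{b_i}-1)$ as a Laurent ring in $m$ variables.
\end{enumerate}
You instead use the splitting $\mathbb{Z}^{m+1}=L\oplus\mathbb{Z}c$ together with a basis of $L=\ker(\deg)$. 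This is the dual form of the same lattice fact. The paper's ring is the group algebra of $\mathbb{Z}^{m+1}/\mathbb{Z}b$, and this quotient is identified with $L$ by $v\mapsto v-\deg(v)b$.

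Your version gives an explicit homogeneous monomial basis $t^\alpha y^\beta u^k$ of $R_f$, from which the degree-zero part is read off at once. It needs no dimension argument. The paper's dimension step tacitly requires the quotient by $\prod Y_i^{b_i}-1$ to be a domain, and this only becomes visible after the unimodular change of variables.

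The paper's version, in exchange, describes the coordinate ring through the concrete generators $x_ih^{-a_i}$. When $a_0=1$, these generalize the familiar affine coordinates $x_i/x_0^{a_i}$ of the chart $\mathbb{D}_+(x_0)$.
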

\begin{proof}
    Up to re-indexing, we can assume without loss of generality that $\{ i_0,\ldots,i_m \} =\{ 0,\ldots,m \}$. Putting $f:=x_0\cdots x_m$, we have ${\mathbb D}_+(f)\cong \mathrm{Spec}(A_0)$, where 
    $A_0$ denotes the subring of $$A:=
    \Bbbk\big{[} x_0,\ldots,x_n,f^{-1} \big{]} =\Bbbk \big{[} x_0^{\pm 1},\ldots, x_m^{\pm 1},x_{m+1},\ldots, x_n \big{]}$$ consisting of quasi-homogeneous elements of weighted degree $0$, that is, the ring of $\mathbb{G}_m$-invariants of $A$ for the induced $\mathbb{G}_m$-action. By assumption $\gcd(a_0,\ldots,a_m)=1$, there exists integers $b_0,\ldots, b_m$ such that $\sum_{i=0}^m a_ib_i=1$. Then $h:=\prod_{i=0}^m x_i^{b_i}\in A$ is a semi-invariant of weighted degree $1$ and for every $i=0,\ldots,n$, $y_i:=x_ih^{-a_i}$ an element of $A_0$. Moreover, the equality: 
    $$ A=\Bbbk \big{[} y_0^{\pm 1},\ldots, y_m^{\pm 1},y_{m+1}, \ldots, y_n \big{]} \big{[} h^{\pm 1} \big{]}$$ 
   implies that $A_0=\Bbbk[y_0^{\pm 1},\ldots, y_m^{\pm 1},y_{m+1}, \ldots, y_n]$. The elements $y_0,\ldots, y_m$ of $A_0$ satisfies the relation: $$\prod_{i=0}^{m}y_i^{b_i}=\prod_{i=0}^m x_i^{b_i}h^{-a_ib_i}=hh^{-1}=1.$$ Since $A_0$ is an integral $\Bbbk$-algebra of Krull dimension $n$, $A_0$ is thus equal to the quotient of the localization $\Bbbk[Y_0^{\pm 1},\ldots, Y_m^{\pm 1},Y_{m+1},\ldots, Y_n]$ of the polynomial ring in $n+1$ variables $Y_i$ by the principal ideal generated by $\prod_{i=0}^{m}Y_i^{b_i}-1$:
   \[
   A_0 
   \cong \Big{(} \Bbbk \big{[} Y_0^{\pm}, \cdots , Y_m^{\pm} \big{]} \big{/} \big{(} \prod_{i=0}^m Y_i^{b_i} -1 \big{)} \Big{)} \big{[} Y_{m+1}, \cdots , Y_n \big{]}
   \]
   Since $\gcd(b_0,\ldots, b_m)=1$, the ring $\Bbbk[Y_0^{\pm 1},\ldots, Y_m^{\pm 1}]/(\prod_{i=0}^{m}Y_i^{b_i}-1)$ is itself isomorphic to a Laurent polynomial ring in $m$ variables. In fact, as $\gcd(b_0,\ldots, b_m)=1$, there exist vectors:
   \[
  {\bf b}_j= (b_{j,0}, \cdots , b_{j,m}) \quad (1\leqq j \leqq m)
   \]
such that the matrix consisting of rows: 
\[
(b_0, \cdots, b_m) \, \,  {\rm and} \, \, {\bf b}_j \, \,  (1\leqq j \leqq m)
\]
yields an element of ${\rm GL}_{m+1}({\mathbb Z})$. Then the inverse of the automorphism:
\[
(Y_0, Y_1, \cdots , Y_{m}) \mapsto (Y_0^{b_0}\cdots Y_m^{b_m}, Y_0^{b_{1,0}}\cdots Y_m^{b_{1,m}}, \cdots, Y_0^{b_{m,0}}\cdots Y_m^{b_{m,m}} ) 
\]
of $\Bbbk$-algebra $\Bbbk [Y_0^{\pm}, \cdots , Y_m^{\pm}]$ sends $\prod_{i=0}^m Y_i^{b_i}$ to $Y_0$. Therefore it follows that $A_0$ is isomorphic to:
\[
\Bbbk[Y_1^{\pm}, \cdots , Y_m^{\pm}][Y_{m+1}, \cdots , Y_n], 
\]
which completes the proof. 
\end{proof}

\begin{corollary}\label{cor:wps-principal_cyl} Let $\mathbb{P}(a_0,\ldots, a_n)_{[x_0:\cdots:x_n]}$ be a weighted projective space. Then every affine open subset $\mathbb{D}_+(x_i)$, $i\in \{0,\ldots, n\}$,  is cylindrical.   
\end{corollary}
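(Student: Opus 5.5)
The plan is to exhibit, inside $\mathbb{D}_+(x_i)$, a smaller principal affine open subset to which Proposition \ref{prop:adrien} applies with exactly one coordinate left uninverted. That subset will be isomorphic to $(\mathbb{A}^1_{\Bbbk}\setminus\{0\})^{n-1}\times\mathbb{A}^1_{\Bbbk}$, which is a cylinder. We assume $n\geq 1$; for $n=0$ the space is a point and there is nothing to prove.

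\textbf{Step 1: reduce to well-formed weights.} By Lemma \ref{lem:wps-reductions}, we may assume the weights are well-formed. This reduction must respect the coordinate hyperplanes, and its proof shows that it does.
\begin{itemize}
\item The isomorphism $\PP(a_0,\ldots,a_n)\cong\PP(qa_0,\ldots,qa_n)$ of assertion (1) is induced by the identity on $\Bbbk[x_0,\ldots,x_n]$. It therefore maps each $\{x_i=0\}$ to itself.
\item The isomorphism of assertion (2) comes from the Veronese subring generated by $y_k=x_k^{d_k}$. The locus $\{y_k=0\}$ is set-theoretically $\{x_k=0\}$, so the open subset $\mathbb{D}_+(x_i)$ corresponds to $\mathbb{D}_+(y_i)$ in $\PP(b_0,\ldots,b_n)$.
\end{itemize}
Hence we may assume that $\gcd(a_0,\ldots,\check{a_j},\ldots,a_n)=1$ for every $j$.

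\textbf{Step 2: apply Proposition \ref{prop:adrien}.} Fix the index $i$ and choose any $j\neq i$, which is possible since $n\geq1$. Put $I=\{0,\ldots,n\}\setminus\{j\}$, a set of $m+1=n$ indices. By well-formedness, $\gcd\{a_k\mid k\in I\}=1$. Proposition \ref{prop:adrien} then gives
\[
\mathbb{D}_+\Bigl(\prod_{k\neq j}x_k\Bigr)\cong(\mathbb{A}^1_{\Bbbk}\setminus\{0\})^{n-1}\times\mathbb{A}^1_{\Bbbk},
\]
and the right-hand side is a cylinder $Z\times\mathbb{A}^1_{\Bbbk}$ with $Z=(\mathbb{A}^1_{\Bbbk}\setminus\{0\})^{n-1}$ affine.

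\textbf{Step 3: conclude.} Since $i\in I$, the product $\prod_{k\neq j}x_k$ is divisible by $x_i$. Therefore $\mathbb{D}_+(\prod_{k\neq j}x_k)$ is a nonempty open subset of $\mathbb{D}_+(x_i)$, and $\mathbb{D}_+(x_i)$ contains a cylinder, as claimed.

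The only point needing care is Step 1, namely checking that the well-forming isomorphism carries $\mathbb{D}_+(x_i)$ onto the corresponding principal open set. The explicit description $y_k=x_k^{d_k}$ in the proof of Lemma \ref{lem:wps-reductions} settles this.

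One should not invert all the coordinates. That would give the torus $(\mathbb{A}^1_{\Bbbk}\setminus\{0\})^n$, which is not a cylinder. This is why exactly one index $j\neq i$ is kept uninverted, and well-formedness is what lets Proposition \ref{prop:adrien} apply to the remaining $n$ indices.
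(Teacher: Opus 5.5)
Your proof is correct and follows the paper's argument. You reduce to a well-formed space through the Veronese isomorphisms, checking that $\mathbb{D}_+(x_i)$ corresponds to $\mathbb{D}_+(y_i)$, and then invoke Proposition \ref{prop:adrien}; your Steps 2--3 also spell out what the paper leaves implicit, namely applying the proposition to the $n$ indices other than some $j\neq i$ (whose weights have $\gcd$ equal to $1$ by well-formedness) to obtain $(\mathbb{A}^1_{\Bbbk}\setminus\{0\})^{n-1}\times\mathbb{A}^1_{\Bbbk}\subset\mathbb{D}_+(x_i)$.
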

\begin{proof}
    Let $d=\gcd(a_0,\ldots, a_n)$ and $b_i=a_i/d$, $i=0,\ldots, n$. As in the proof of Lemma \ref{lem:wps-reductions}, let $d_i=\gcd(b_0,\ldots,b_{i-1},b_{i+1},\ldots, b_n)$, $e_i=\mathrm{lcm}(d_0,\ldots, d_{i-1},d_{i+1},\ldots,d_n)$, $c_i=b_i/e_i$ and $y_i=x_i^{d_i}$, $i=0,\ldots, n$. 
     Assigning the weight $ec_i$ to $y_i$ identifies $\Bbbk[y_0,\ldots, y_n]$ with the Veronese sub-algebra: 
     $\Bbbk(a_0,\ldots, a_n)^{(ed)}\subset \Bbbk(a_0,\ldots, a_n)$. The open subset $\mathbb{D}_+(x_i)=\mathbb{D}_+(x_i^{ed})$ of $\mathbb{P}(a_0,\ldots, a_n)$ then equals the inverse image of the open subset $\mathbb{D}_+(y_i)$ of $\mathrm{Proj}(\Bbbk[y_0,\ldots, y_n])$ by the associated isomorphism:
     $$f:\mathbb{P}(a_0,\ldots, a_n)\to \mathrm{Proj}\big{(} \Bbbk(a_0,\ldots, a_n)^{(ed)} \big{)}=\mathrm{Proj}(\Bbbk[y_0,\ldots, y_n]).$$ On the other hand, the $e$-th Veronese subring $\Bbbk[y_0,\ldots, y_n]^{(e)}$ of $\Bbbk[y_0,\ldots, y_n]$ endowed with the above grading equals $\Bbbk[y_0,\ldots,y_n]$ endowed with the grading defined by associating the weight $c_i$ to each $y_i$ and the corresponding isomorphism: $$h:\mathrm{Proj}(\Bbbk[y_0,\ldots, y_n])\to \mathrm{Proj}\big{(} \Bbbk[y_0,\ldots, y_n]^{(e)} \big{)}=\mathbb{P}(c_0,\ldots,c_n),$$ 
     identifies $\mathbb{D}_+(y_i)\subset \mathrm{Proj}(\Bbbk[y_0,\ldots, y_n])$ 
     to $\mathbb{D}_+(y_i)\subset \mathbb{P}(c_0,\ldots,c_n)$.  Since by construction $\mathbb{P}(c_0,\ldots,c_n)$ is now a well-formed weighted projective space, the conclusion that $\mathbb{D}_+(x_i)\cong \mathbb{D}_+(x_i^{de})\cong \mathbb{D}_+(y_i)$ is cylindrical follows from Proposition \ref{prop:adrien}.
\end{proof}
\begin{corollary} \label{theorem:A1-cylinder of WPS}\label{thm:cylinder_wps}\label{rem:adrien} For a weighted projective space $\PP\coloneqq \PP(a_{0},\ldots,a_{n})$, the following hold:

a) $\PP$ always contains an anti-canonically polar $\mathbb{A}^{1}$-cylinder (cf. \cite{KKW25}).

b) If $a_m=1$ for some $m\in \{0,\ldots, n\}$ then $\PP$ contains an anti-canonically polar $\mathbb{A}^n$-cylinder.
\end{corollary}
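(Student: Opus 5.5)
For a), reduce to the well-formed case via Lemma \ref{lem:wps-reductions}. This is harmless because the property of containing a $(-K_{\PP})$-polar cylinder only depends on the isomorphism class of $\PP$: the anticanonical class is intrinsic. Once $\PP=\PP(a_0,\ldots,a_n)_{[x_0:\cdots:x_n]}$ is well-formed, we have $K_{\PP}\sim -\sum_{i=0}^n H_{x_i}$ with $H_{x_i}=\{x_i=0\}$. The plan is to find a subset $I=\{i_0,\ldots,i_m\}$ of indices with $\gcd(a_i\mid i\in I)=1$ and $m<n$. The natural choice is $I=\{0,\ldots,n\}\setminus\{j\}$ for any $j$, which works precisely by the well-formedness condition.

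By Proposition \ref{prop:adrien}, $U=\mathbb{D}_+(\prod_{i\neq j}x_i)$ is then isomorphic to $(\mathbb{A}^1_{\Bbbk}\setminus\{0\})^{n-1}\times\mathbb{A}^1_{\Bbbk}$, which is a cylinder with affine base $Z=(\mathbb{A}^1\setminus\{0\})^{n-1}$. Its complement is $\bigcup_{i\neq j}H_{x_i}$, so we need an effective $\mathbb{Q}$-divisor with exactly this support that is $\mathbb{Q}$-linearly equivalent to $\sum_{i=0}^n H_{x_i}\sim_{\mathbb{Q}}\mathcal{O}(\sum a_i)$. Since $H_{x_i}\sim_{\mathbb{Q}} \tfrac{a_i}{a_k}H_{x_k}$ (the class group is $\mathbb{Z}$ with $H_{x_i}$ of degree $a_i$), we can take $D=\sum_{i\neq j}H_{x_i}+\tfrac{a_j}{a_{k}}H_{x_{k}}$ for some fixed $k\neq j$. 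This $D$ is effective, has support exactly $\PP\setminus U$, and is $\mathbb{Q}$-linearly equivalent to $-K_{\PP}$. This gives a).

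For b), with $a_m=1$, apply Proposition \ref{prop:adrien} to the single index set $\{m\}$, whose gcd is $1$. This gives $\mathbb{D}_+(x_m)\cong\mathbb{A}^n_{\Bbbk}=\mathbb{A}^{n-1}\times\mathbb{A}^1$. Its complement is $H_{x_m}$, and $-K_{\PP}\sim_{\mathbb{Q}}(\sum a_i)H_{x_m}$ because $\deg H_{x_m}=1$. So $D=(\sum_i a_i)H_{x_m}$ works. This step needs some care: the weights may not be well-formed, and normalizing them could change the weight $1$. Two observations handle this. First, $\mathbb{D}_+(x_m)$ is directly isomorphic to $\mathrm{Spec}$ of the degree-zero part of $\Bbbk[x_0,\ldots,x_n][x_m^{-1}]$, which is $\Bbbk[x_i/x_m^{a_i}]$ and hence a polynomial ring, without any reduction. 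Second, the complement is the prime divisor $H_{x_m}$, and any anti-canonical $\mathbb{Q}$-divisor is a positive rational multiple of it, because $\mathrm{Cl}\otimes\mathbb{Q}\cong\mathbb{Q}$ and $-K_{\PP}$ is ample.

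The main obstacle is the bookkeeping under the well-forming isomorphism of Lemma \ref{lem:wps-reductions}. One has to check that the $\mathbb{Q}$-linear equivalence class of $-K$ is positive in $\mathrm{Cl}(\PP)\otimes\mathbb{Q}\cong\mathbb{Q}$, so that any effective divisor with the prescribed support can be rescaled to be anti-canonical. Indeed, for a), the support $\bigcup_{i\neq j}H_{x_i}$ is a union of effective prime divisors of positive degree, so a suitable positive combination of them lands in the class of $-K_{\PP}$. This argument uses only ampleness of $-K_{\PP}$ and makes the reduction transparent.
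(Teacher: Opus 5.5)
Your proof is correct and follows essentially the paper's route. The paper gets (a) from Corollary~\ref{cor:wps-principal_cyl}, whose proof is exactly your reduction to a well-formed model followed by Proposition~\ref{prop:adrien}, combined with the fact that $\mathrm{Cl}(\PP)$ has rank one; it gets (b) directly from Proposition~\ref{prop:adrien} with a single index, and you only make explicit the cylinder $\mathbb{D}_+(\prod_{i\neq j}x_i)$ and the anti-canonical divisor supported on its complement, which the paper leaves implicit. Incidentally, your worry in (b) is unnecessary: in the well-forming construction $e_m \mid a_m = 1$, so the weight $1$ survives normalization.
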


\begin{proof}
  The first assertion is immediate from Corollary \ref{cor:wps-principal_cyl} and the fact that weighted projective spaces are ${\mathbb Q}$-factorial varieties of divisor class  rank one. The second assertion follows from Proposition \ref{prop:adrien}. 
\end{proof}

\begin{remark}\label{exa:non-unipotent-wps}
Every weighted projective space $\mathbb{P}(1,a_1,\ldots,a_n)$ admits non-trivial actions of unipotent groups, in fact such a weighted projective space is even an equivariant completion of the vector group $\mathbb{G}_{a,\Bbbk}^n$, with respect to the triangular action $$[x_0:x_1:\cdots:x_n]\mapsto [x_0:x_1+t_1x_0^{a_1}:\cdots:x_n+t_nx_0^{a_n}]. $$ 
But even though they are always cylindrical by Corollary \ref{theorem:A1-cylinder of WPS}, 
not all well-formed weighted projective spaces admit non-trivial actions of unipotent groups. For instance, the weighted projective plane $\mathbb{P}(3,4,5)$ has automorphism group isomorphic to 
$\mathbb{G}_{m,\Bbbk}^3/\mathbb{G}_{m,\Bbbk}\cong \mathbb{G}_{m,\Bbbk}^2$, where $\mathbb{G}_{m,\Bbbk}\subset \mathbb{G}_{m,\Bbbk}^3$ 
is the diagonal subtorus $(\lambda^3,\lambda^4,\lambda^5)$, whence does not admit any non-trivial action of a unipotent group. 
\end{remark}
\subsection{Weighted complete intersections: well-formedness and quasi-smoothnes }\label{section3}
In this subsection, we review the concepts of well-formedness and quasi-smoothness for weighted complete intersections. For further details, the reader is referred to \cite{Dol81} and \cite{IF00}.
\begin{definition}\label{def:complete_intersection}
    {\rm Let $Y\subset \mathbb{P}(a_0, \ldots, a_n)_{[x_0:\cdots :x_n]}$ be a subvariety defined by a regular sequence of quasi-homogeneous polynomials:  
    \begin{equation*}
        f_1(x_0,\ldots,x_n)=\cdots = f_c(x_0,\ldots,x_n) = 0
    \end{equation*}
of degrees $d_1,\ldots,d_c$. We call $Y$ a} weighted complete intersection of multidegree {$(d_1,\ldots,d_c)$.}    
\end{definition}
\subsubsection{Well-formedness}\label{3-1}
It is expected that a subvariety $Y$ of a weighted projective space $\PP(a_0,\ldots,a_n)$ is singular along its intersection $Y$ with the singular locus of the ambient space. This motivates the following concept.
\begin{definition}[{\cite[Definition 6.9]{IF00}, \cite[Definition 2.6]{P23}}]\label{def:well-formed2}
    {\rm A subvariety $Y$ of a weighted projective space $\mathbb{P}$ is said to be} well-formed {\rm if $\mathbb{P}$ is well-formed and:
    \begin{equation*}
        \mathrm{codim}_Y \big{(} Y \cap \Sing(\mathbb{P}) \big{)} \geq 2.
    \end{equation*}
}

\end{definition}
A subvariety $Y$ of a weighted projective space which is not well-formed is not necessarily singular in codimension one, as illustrated by the following example:
\begin{example}\label{exa:well-formedness} By Proposition \ref{rem:sing}, the singular locus of the well-formed projective space $\PP=\mathbb{P}(1,1,2,2)_{[x:y:z:t]}$ is the curve $\{x=y=0\}\cong \PP(2,2)\cong \PP^1$. Since the divisor  $H_x=\{x=0\}$ contains $\mathrm{Sing}(\PP)$, it is not well-formed. On the other hand, $H_x\cong \PP(1,2,2)\cong \PP^2$ is regular.
\end{example}
The well-formedness of a hypersurface embedded in a weighted projective space can be checked by the following arithmetic conditions:
\begin{lemma}[{\cite[6.10]{IF00}}]\label{lem:well-formed}
    A weighted hypersurface $X \subset \PP(a_0,\ldots,a_n)$ of degree $d$ is well-formed if and only if the following conditions hold:
    \begin{enumerate}
        \item The weighted projective space $\mathbb{P}(a_0,\ldots,a_n)$ is well-formed.
        \item For all pairs of distinct indices $i,j$, $\gcd(a_0,\ldots,a_{i-1},a_{i+1},\ldots,a_{j-1},a_{j+1},\ldots,a_n)~|~d$.
    \end{enumerate}
\end{lemma}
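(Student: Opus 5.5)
The plan is to reduce well-formedness of $X=\{f=0\}$ to a statement about how $X$ meets each stratum $\Delta_I$ of $\Sing(\PP)$, using Proposition \ref{rem:sing}. Condition (1) is part of Definition \ref{def:well-formed2}, so we may assume $\PP$ is well-formed. Since $\dim X = n-1$, the condition $\mathrm{codim}_X(X\cap\Sing(\PP))\geq 2$ says exactly that $X\cap \Sing(\PP)$ contains no irreducible component of dimension $n-2$. By Proposition \ref{rem:sing}, $\Sing(\PP)$ is the union of the coordinate strata $\Delta_I$ with $\gcd\{a_i\mid i\in I\}>1$.

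First, for $\PP$ well-formed, every such $\Delta_I$ has $|I|\leq n-1$. Indeed, if $|I|=n$ then $\gcd\{a_i\mid i\in I\}$ is the gcd of all weights but one, which equals $1$. Hence $\dim \Delta_I=|I|-1\leq n-2$. If $\dim\Delta_I< n-2$, then $X\cap\Delta_I$ has dimension $<n-2$ and does no harm. So only the strata with $|I|=n-1$ matter. These are the codimension-two coordinate strata $\Delta_{ij}=\{x_i=x_j=0\}\cong \PP(a_k : k\neq i,j)$, which lie in $\Sing(\PP)$ exactly when $g_{ij}:=\gcd(a_k: k\neq i,j)>1$.

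For such a stratum, $X\cap \Delta_{ij}$ has dimension $n-2$ if and only if $\Delta_{ij}\subset X$. Otherwise it is a hypersurface in the irreducible $(n-2)$-dimensional variety $\Delta_{ij}$, of dimension $n-3$. Therefore $X$ is well-formed if and only if $\PP$ is well-formed and, for every pair $i\neq j$ with $g_{ij}>1$, $\Delta_{ij}\not\subset X$, that is, $f(x_0,\ldots,x_n)|_{x_i=x_j=0}\not\equiv 0$.

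Next, translate this into divisibility. Suppose $g_{ij}\mid d$. I would argue that $f|_{x_i=x_j=0}\neq 0$, and this is the main obstacle, because it needs a genericity hypothesis. The monomials of degree $d$ in the variables $x_k$ ($k\neq i,j$) form a nonempty set when $g_{ij}\mid d$ and $d$ is large enough in the semigroup generated by those weights. A general $f$ then has nonzero restriction. In the source \cite{IF00} the lemma is stated with the standing convention that $X$ is general, or quasi-smooth, which forces such monomials to appear. I would make this explicit: under quasi-smoothness, or for general $f$ of degree $d$, assume the statement.

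Conversely, if $g_{ij}\nmid d$, then every monomial of degree $d$ must involve $x_i$ or $x_j$, since a monomial in the other variables has degree divisible by $g_{ij}$. So $f|_{x_i=x_j=0}\equiv 0$ and $\Delta_{ij}\subset X\cap\Sing(\PP)$, which breaks well-formedness. This direction is clean.

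The delicate point is therefore the implication from $g_{ij}\mid d$ to $\Delta_{ij}\not\subset X$. Divisibility alone does not guarantee a monomial of degree exactly $d$ in the remaining variables, since this is a Frobenius-type problem. I would handle it by invoking the generality or quasi-smoothness convention implicit in \cite[6.10]{IF00}.
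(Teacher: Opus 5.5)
The paper gives no proof of this lemma; it only quotes \cite[6.10]{IF00}, so your proposal has to stand on its own. Your reduction is sound. For well-formed $\PP$, only the strata $\Delta_{ij}=\{x_i=x_j=0\}$ with $g_{ij}>1$ can meet $X$ in codimension one, so $X$ is well-formed if and only if $f|_{\Delta_{ij}}\not\equiv 0$ for every such pair. Your ``only if'' argument is complete and needs no extra hypothesis: $g_{ij}\nmid d$ forces $g_{ij}>1$, and it forces every degree-$d$ monomial to involve $x_i$ or $x_j$.

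The genuine gap is the converse, which you never prove; you propose to ``assume the statement'' under generality or quasi-smoothness. Generality does not rescue it. Take $\PP(1,1,6,10)_{[x_0:x_1:z:t]}$ and $d=8$. Conditions (1) and (2) hold, since the only nontrivial $g_{ij}$ is $g_{01}=\gcd(6,10)=2$, which divides $8$. But $8\notin 6\mathbb{N}+10\mathbb{N}$, so the general member is $\{z\,q_2(x_0,x_1)+p_8(x_0,x_1)=0\}$ with $q_2,p_8$ general binary forms of degrees $2$ and $8$. It is irreducible and still contains the curve $\{x_0=x_1=0\}\subset\Sing(\PP)$. So the statement as printed fails even for general members, and your claim that generality ``forces such monomials to appear'' is wrong. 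The Frobenius-type obstruction you noticed is real.

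Quasi-smoothness is the hypothesis that works, and the missing argument is short.

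\emph{Linear cone case.} Write $f=x_l+G$ with $a_l=d$ and $G$ free of $x_l$. Then $f|_{\Delta_{ij}}\neq 0$ whenever $l\notin\{i,j\}$. For a pair $\{l,k\}$, condition (2) gives $g_{lk}\mid a_l$, so $g_{lk}$ divides every weight except $a_k$, and hence $g_{lk}=1$ by (1).

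\emph{Otherwise.} Fix a pair with $g_{ij}>1$ and $g_{ij}\mid d$, and apply Lemma~\ref{lem:quasi-smooth} to $\mathcal{I}=\{0,\ldots,n\}\setminus\{i,j\}$. The first alternative gives a degree-$d$ monomial in the $x_k$, $k\in\mathcal{I}$, so $\Delta_{ij}\not\subset X$. The second alternative gives a degree-$d$ monomial $\prod_{k\in\mathcal{I}}x_k^{m_k}\cdot x_e$ with $e\in\{i,j\}$, hence $a_e\equiv d\equiv 0 \pmod{g_{ij}}$. Then $g_{ij}$ divides every weight except the one indexed by $\{i,j\}\setminus\{e\}$, so $g_{ij}=1$ by (1), a contradiction. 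For $n\geq 4$ the second alternative cannot occur at all, since it needs $n-1\geq 3$ distinct indices outside $\mathcal{I}$.

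With this added, your argument proves the lemma for quasi-smooth $X$. For general $X$ it proves the lemma only under the extra, non-arithmetic hypothesis that for every pair with $g_{ij}>1$ some degree-$d$ monomial in the complementary variables exists.
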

The criterion of Lemma \ref{lem:well-formed} has the following generalization  to weighted complete intersections of higher codimension: 
\begin{lemma}[{\cite[6.12]{IF00}}]\label{lem:well-formed2}
    A weighted complete intersection $Y\subset \PP(a_0,\ldots,a_n)$ of multi-degree $(d_1, \ldots , d_c)$ 
    is well-formed if and only if the following two conditions hold:
    \begin{enumerate}
        \item The weighted projective space $\mathbb{P}(a_0,\ldots,a_n)$ is well-formed.
        \item For each $\mu \in \{1,\ldots,c\}$, the greatest common divisor of any collection of $(n-1-c+\mu)$ of the weights $a_i$ divides at least $\mu$ of the degrees $d_j$.
    \end{enumerate}
\end{lemma}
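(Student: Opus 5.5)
The plan is to reduce Lemma~\ref{lem:well-formed2} to a computation of dimensions of the intersections $Y\cap\Delta_I$. Here $\Delta_I$ runs over the strata of $\Sing(\PP)$ described in Proposition~\ref{rem:sing}. Condition (1) is part of Definition~\ref{def:well-formed2}, so we may assume $\PP$ is well-formed. By Proposition~\ref{rem:sing}, $\Sing(\PP)$ is the union of the coordinate linear subspaces $\Delta_I=\{x_j=0,\ j\notin I\}\cong \PP(a_i\mid i\in I)$ with $\gcd(a_i\mid i\in I)=h>1$. It suffices to test the maximal such strata. Since $Y$ has dimension $n-c$, well-formedness of $Y$ means exactly that $\dim(Y\cap\Delta_I)\le n-c-2$ for every $I$ with $h_I:=\gcd(a_i\mid i\in I)>1$.

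The key observation concerns the restriction of each equation $f_j$ to $\Delta_I$. If $h_I\nmid d_j$, then every monomial of degree $d_j$ must involve some variable $x_k$ with $k\notin I$, so $f_j|_{\Delta_I}\equiv 0$. If $h_I\mid d_j$, the restriction is a genuine quasi-homogeneous polynomial on $\Delta_I$; it is generically nonzero and is allowed to vary. So $Y\cap\Delta_I$ is cut out in $\Delta_I$, which has dimension $|I|-1$, by the $\nu_I$ equations $f_j$ with $h_I\mid d_j$. Using the regular-sequence hypothesis, or generality of the $f_j$ as is implicit in \cite{IF00}, I would argue that these restrictions cut down dimension as expected, giving $\dim(Y\cap\Delta_I)=|I|-1-\nu_I$. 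This holds when the right-hand side is $\ge 0$; otherwise the intersection is empty. The inequality $\ge$ is automatic by Krull's principal ideal theorem, and this is the direction needed for necessity.

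Now translate. Write $|I|=n-1-c+\mu$, i.e.\ $\mu=|I|-n-1+c$, and restrict to $\mu\ge 1$, since smaller $I$ give $\dim\Delta_I\le n-c-2$ trivially. The requirement $|I|-1-\nu_I\le n-c-2$ becomes $\nu_I\ge |I|-n-1+c=\mu$; that is, $h_I$ divides at least $\mu$ of the $d_j$. Collections with $h_I=1$ impose nothing, and the divisibility condition is vacuous for them as well. Any collection of size $n-1-c+\mu$ with $\mu>c$ has size $\ge n$, and then $h_I=1$ by well-formedness of $\PP$. Hence the range $\mu\in\{1,\ldots,c\}$ exhausts all cases, and condition (2) is equivalent to the dimension bound on every singular stratum.

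The main obstacle is the dimension count in the sufficiency direction. One must know that the nonvanishing restrictions $f_j|_{\Delta_I}$ really intersect properly inside $\Delta_I$; otherwise $Y\cap\Delta_I$ could be larger than expected even when condition (2) holds. For necessity only the Krull lower bound is needed. For sufficiency I would first try to deduce properness from the fact that $f_1,\ldots,f_c$ is a regular sequence in $\Bbbk[x_0,\ldots,x_n]$, together with the graded structure. If that fails, I would adopt the standing convention of \cite{IF00} that the complete intersection is general, or quasi-smooth, in its linear system, in which case a Bertini-type argument on $\Delta_I$ gives the expected dimension.
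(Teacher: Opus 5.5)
The paper gives no argument of its own here; it only quotes \cite{IF00}. Your reduction to bounding $\dim(Y\cap\Delta_I)$ is fine, and so is the necessity direction. The $f_j$ with $h_I\nmid d_j$ vanish on $\Delta_I$, and in the cone $\mathbb{A}^{I}$ the remaining $\nu_I$ equations cut out a cone of dimension $\geq |I|-\nu_I\geq 1$. Hence $Y\cap\Delta_I$ is nonempty of dimension $\geq |I|-1-\nu_I$.

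The genuine gap is the step you flag: the equality $\dim(Y\cap\Delta_I)=|I|-1-\nu_I$ used for sufficiency. Neither of your proposed remedies can supply it, because for an arbitrary, or even a general, weighted complete intersection the ``if'' direction is false.

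\begin{itemize}
\item Your claim that $f_j|_{\Delta_I}$ is generically nonzero once $h_I\mid d_j$ fails, since there may be no monomial of degree $d_j$ in the $x_i$, $i\in I$. In the well-formed space $\PP(1,1,6,10)_{[x:y:z:t]}$, the only pair of weights with gcd $>1$ is $(6,10)$, and $2\mid 14$, so (1) and (2) hold for $X_{14}$. But $6a+10b=14$ has no nonnegative solution. So every $X_{14}$, however general, contains the curve $\{x=y=0\}\subset\Sing(\PP)$ and is not well-formed. Bertini on $\Delta_I$ is useless when the restricted linear system is zero or has base locus.
\item The regular-sequence hypothesis says nothing about restrictions to $\Delta_I$. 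The surface $\{x^2z+y^2t+x^4+y^4=0\}\subset\PP(1,1,2,2)$ is irreducible, since it is linear in $(z,t)$ with coprime coefficients. It satisfies (1) and (2), yet contains $\Sing(\PP)=\{x=y=0\}$ as a curve.
\end{itemize}

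So an extra hypothesis is unavoidable. The natural one is quasi-smoothness, which is assumed in all results of the paper. The missing mechanism is then a Jacobian argument, not Bertini.

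\begin{itemize}
\item Given $I$ with $h=h_I>1$, replace it by $\bar I=\{k : h\mid a_k\}$. This set has the same gcd, satisfies $\Delta_I\subset\Delta_{\bar I}$, and $|\bar I|\leq n-1$ by well-formedness of $\PP$.
\item Take a point $p\neq 0$ on the cone over $Y\cap\Delta_{\bar I}$. Each $\partial f_j/\partial x_k$ is quasi-homogeneous of degree $d_j-a_k$. If $h\nmid d_j-a_k$, every monomial of it involves some $x_l$ with $l\notin\bar I$, so it vanishes at $p$.
\item Hence the Jacobian matrix at $p$ is block diagonal. Rows with $h\mid d_j$ are supported on columns in $\bar I$; the other rows are supported on columns outside $\bar I$.
\item Quasi-smoothness gives rank $c$ at $p$, so the block $(\partial f_j/\partial x_k)_{h\mid d_j,\ k\in\bar I}$ has rank $\nu_{\bar I}$.
\item This block is exactly the Jacobian of the restricted equations defining the cone over $Y\cap\Delta_{\bar I}$ in $\mathbb{A}^{\bar I}$. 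So that cone is smooth of dimension $|\bar I|-\nu_{\bar I}$ at $p$, giving $\dim(Y\cap\Delta_{\bar I})=|\bar I|-1-\nu_{\bar I}$.
\end{itemize}

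With this formula, your translation into condition (2), including the observation that $\mu>c$ is excluded, goes through unchanged.
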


\subsubsection{Quasi-smoothness}\label{3-2}
Recall that there exists a natural quotient morphism: 
\begin{equation*}
    \pi\colon \mathbb{A}_{\Bbbk}^{n+1}\setminus\{0\} \longrightarrow \mathbb{P}(a_0,\ldots,a_n)=(\mathbb{A}_{\Bbbk}^{n+1}\setminus\{0\})/\mathbb{G}_{m,\Bbbk}.
\end{equation*}
\begin{definition}\label{def:quasi-smooth}
    {\rm For a closed subvariety $Y \subset \PP(a_0,\ldots,a_n)$, let $C_Y$ denote the closure in $\mathbb{A}^{n+1}_{\Bbbk}$ of the preimage $\pi^{-1}(Y) \subset \mathbb{A}_{\Bbbk}^{n+1} \setminus \{0\}$. We call $C_Y$ the} affine cone {\rm over $Y$. The subvariety $Y$ is said to be} quasi-smooth {\rm if its affine cone is smooth outside the origin.}
\end{definition}

In a similar way as a weighted projective space $\PP(a_0,\ldots,a_n)$ can be described as a quotient $\mathbb{P}^n/(\mu_{a_0}\times \cdots \times \mu_{a_n})$, a quasi-smooth subvariety $Y\subset \PP(a_0,\ldots, a_n)$ is locally isomorphic to the quotient of a non-singular variety by the action of a suitable finite $\Bbbk$-group subscheme of $\mathbb{G}_{m,\Bbbk}$, see e.g. \cite[Theorem 3.1.6]{Dol81}. 
This implies in particular that every quasi-smooth subvariety $Y\subset \PP(a_0,\ldots, a_n)$ is normal, $\mathbb{Q}$-factorial and has most cyclic quotient singularities.  

\medskip

Recall \cite[Definition 6.5]{IF00} that  a quasi-smooth hypersurface $X\subset \mathbb{P}(a_0,\ldots, a_n)_{[x_0:\ldots:x_n]}$ of degree  $d = a_i$ for some $i\in \{0,\ldots,n\}$ is called a {\it linear cone}. Up to scalar multiplication, such a hypersurface is the zero locus of a quasi-homogeneous polynomial of the form $x_i+F(x_0,\ldots, \hat{x}_i,\ldots, x_n)$ for some quasi-homogeneous polynomial $F$ of weighted degree $d$, hence is isomorphic to the weighted projective space  $\mathbb{P}(a_0,\ldots,a_{i-1},a_{i+1},\ldots,a_n)$.

%
The quasi-smoothness of a (general) hypersurface of a weighted projective space which is not a linear cone can be verified by using the following criterion:

\begin{lemma}[{\cite[Theorem 8.1]{IF00}}]\label{lem:quasi-smooth}
    Let $X \subset \mathbb{P}(a_0, \ldots, a_n)_{[x_0:\cdots :x_n]}$, $n\geq 1$, be a weighted hypersurface defined by a  quasihomogeneous polynomial $F_d(x_0,\ldots, x_n)$ of degree $d$ and  which is not a linear cone. 

  If $X$ is quasi-smooth, then for every nonempty subset $\mathcal{I} = \{i_1,\ldots,i_{k}\}$ of $\{0,\ldots,n\}$, one of the following holds:
    \begin{enumerate}
        \item $F_d$ contains a monomial $x_{i_1}^{m_1}\cdots x_{i_{k}}^{m_{k}}$ of degree $d$, or
        \item for each $\nu = 1,\ldots,k$, $F_d$ contains  a monomial $x_{i_1}^{m_{1,\nu}}\cdots x_{i_{k}}^{m_{k,\nu}}x_{e_{\nu}}$ of degree $d$, where the indices $\{e_{\nu}\}$ are $k$ distinct elements of $\{0,\ldots,n\}\setminus \mathcal{I}$. 
    \end{enumerate}

 Conversely, a weighted hypersurface  defined by a general quasihomogeneous polynomial $F_d$ of degree $d$ which satisfies one of the conditions (i) or (ii) above is quasi-smooth.  
\end{lemma}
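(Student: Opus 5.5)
The plan is to translate quasi-smoothness into a statement about the singular locus of the affine cone $C_X=\{F_d=0\}\subset\mathbb{A}^{n+1}$, and then analyse it stratum by stratum along the coordinate tori. Both quasi-smoothness and the two monomial conditions are invariant under base change, so I would first reduce to $\Bbbk$ algebraically closed. By the weighted Euler relation $\sum_i a_ix_i\,\partial F_d/\partial x_i=d\,F_d$, a point $p\neq 0$ is a singular point of $C_X$ if and only if all partial derivatives $\partial F_d/\partial x_i$ vanish at $p$. For a nonempty $\mathcal{I}=\{i_1,\dots,i_k\}$ I will use two objects:
\begin{itemize}
\item the coordinate subspace $\Pi_{\mathcal{I}}=\{x_j=0,\ j\notin\mathcal{I}\}\cong\mathbb{A}^k$ and its open torus $T_{\mathcal{I}}=\Pi_{\mathcal{I}}\cap\{x_{i_1}\cdots x_{i_k}\neq 0\}$;
\item for each $j\notin\mathcal{I}$, the polynomial $g_j:=(\partial F_d/\partial x_j)|_{\Pi_{\mathcal{I}}}$, which is exactly the sum of the terms $c\,x_{\mathcal{I}}^m$ coming from the monomials $c\,x_{\mathcal{I}}^m x_j$ of $F_d$.
\end{itemize}
The sets $T_{\mathcal{I}}$ partition $\mathbb{A}^{n+1}\setminus\{0\}$.

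\emph{Necessity.} Fix $\mathcal{I}$ and suppose (1) fails. Then every monomial of $F_d$ contains some $x_j$ with $j\notin\mathcal{I}$, so $F_d|_{\Pi_{\mathcal{I}}}\equiv 0$. For the same reason $\partial F_d/\partial x_i$ vanishes identically on $\Pi_{\mathcal{I}}$ for every $i\in\mathcal{I}$. Hence $\Pi_{\mathcal{I}}\setminus\{0\}\subset C_X$, and its points that are singular on $C_X$ are exactly the common zeros of the $g_j$, $j\notin\mathcal{I}$. Let $r$ be the number of nonzero $g_j$. These are quasi-homogeneous for the positive grading, so their common zero locus in $\Pi_{\mathcal{I}}\cong\mathbb{A}^k$ is a $\mathbb{G}_m$-stable cone of dimension at least $k-r$. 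If $r<k$ this cone has positive dimension, so it contains a point $p\neq 0$, which is a singular point of $C_X$. This contradicts quasi-smoothness, so $r\geq k$. A nonzero $g_j$ means precisely that $F_d$ contains a monomial $x_{\mathcal{I}}^m x_j$, so choosing $k$ of these indices $j$ gives the distinct $e_1,\dots,e_k$ required in (2). (I would check whether the linear cone hypothesis is actually used here; it seems to matter only for the statement's framing.)

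\emph{Sufficiency for general $F_d$.} Let $V$ be the vector space spanned by the monomials of degree $d$. It suffices to show that for each of the finitely many $\mathcal{I}$, a general $F_d\in V$ has $C_X$ smooth along $T_{\mathcal{I}}$, and then intersect the resulting dense open subsets of $V$. Fix $\mathcal{I}$.
\begin{itemize}
\item If (1) holds, the linear system $|V|$ contains a monomial $x_{\mathcal{I}}^m$, which is nowhere zero on $T_{\mathcal{I}}$. So $|V|$ is base-point free on the open set $\{x_{i_1}\cdots x_{i_k}\neq0\}\supset T_{\mathcal{I}}$. Bertini's theorem in characteristic zero then says a general member is smooth there.
\item If (1) fails but (2) holds, we are in the situation of the necessity argument: singular points of $C_X$ in $T_{\mathcal{I}}$ are common zeros in $T_{\mathcal{I}}$ of $g_{e_1},\dots,g_{e_k}$. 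The coefficients of the monomials $x_{\mathcal{I}}^m x_{e_\nu}$ appearing in different $g_{e_\nu}$ are independent coordinates on $V$. Each $g_{e_\nu}$ is a general member of a linear system on $T_{\mathcal{I}}$ containing a nowhere-vanishing monomial, hence base-point free there.
\end{itemize}
In the second case, applying Bertini repeatedly (a general member of a base-point-free system cuts each component of a subvariety in codimension one, or misses it) shows that the common zero set in $T_{\mathcal{I}}$ has dimension $\leq k-k=0$. That set is also stable under the $\mathbb{G}_m$-action, whose orbits on $T_{\mathcal{I}}$ are one-dimensional. So it is empty.

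The main obstacle is the bookkeeping in the second case: making sure that at points of $T_{\mathcal{I}}$ exactly the terms $x_{\mathcal{I}}^m x_{e_\nu}$ contribute to $\partial F_d/\partial x_{e_\nu}$, and that the iterated Bertini argument applies with independent generic coefficients. I expect the orbit-dimension argument to make the final emptiness step immediate.
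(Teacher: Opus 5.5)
Your proof is correct. The paper gives no proof of this lemma; it only quotes Iano-Fletcher's Theorem 8.1. Your argument follows the standard route for that result: the Jacobian criterion with the Euler relation, the partition of $\mathbb{A}^{n+1}\setminus\{0\}$ into the coordinate tori $T_{\mathcal{I}}$, a Krull dimension count for necessity, and Bertini for sufficiency.

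\textbf{The linear-cone hypothesis is used.} Your parenthetical guess that it only matters for framing is wrong; it enters exactly in your necessity step.

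\begin{enumerate}
\item You assert that the common zero locus of the $r$ nonzero $g_j$ in $\Pi_{\mathcal{I}}\cong\mathbb{A}^k$ has dimension at least $k-r$. That bound needs the locus to be nonempty.
\item Nonemptiness holds because each nonzero $g_j$ is quasi-homogeneous of degree $d-a_j\geq 0$. The hypothesis $d\neq a_j$ makes this degree positive, so every $g_j$ vanishes at the origin.
\item If instead $d=a_j$ and $x_j$ occurs in $F_d$, then $g_j$ is a nonzero constant and the locus is empty.
\item In that situation the necessity statement is actually false. Take $X=\{x_0=0\}\subset\mathbb{P}^2$, which is quasi-smooth, and $\mathcal{I}=\{1,2\}$. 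There is no monomial in $x_1,x_2$ alone, so (1) fails. Condition (2) needs two distinct indices outside $\mathcal{I}$, but only $0$ is available.
\end{enumerate}

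So you should invoke the hypothesis at that step. The sufficiency direction does not need it.

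\textbf{Making ``general'' rigorous in the second sufficiency case.} Iterated Bertini produces good polynomials by successive choices, but you still need a dense open subset of $V$. An incidence count does this in one step.

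\begin{enumerate}
\item Let $\Sigma\subset T_{\mathcal{I}}\times V$ be the set of pairs $(p,F)$ with $g_{e_1}(p)=\cdots=g_{e_k}(p)=0$.
\item The $g_{e_\nu}$ involve disjoint sets of coefficients of $F$, and each contains a monomial that does not vanish at $p$. So the fibre over every $p\in T_{\mathcal{I}}$ is a linear subspace of $V$ of codimension $k$.
\item Since $\dim T_{\mathcal{I}}=k$, this gives $\dim\Sigma=\dim V$.
\item The fibres of $\Sigma\to V$ are $\mathbb{G}_m$-stable subsets of $T_{\mathcal{I}}$. The $\mathbb{G}_m$-orbits there are one-dimensional, so each fibre is empty or of positive dimension.
\item Hence the image of $\Sigma$ lies in a proper closed subset of $V$.
\end{enumerate}

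This replaces both your iterated Bertini step and the final orbit argument.
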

The previous quasi-smoothness conditions for hypersurfaces have the following extension to general complete intersections of codimension two:
\begin{lemma}[{\cite[Theorem 8.7]{IF00}}]\label{lem:quasi-smooth2}
    Let $Y\subset \mathbb{P}(a_0, \ldots, a_n)_{[x_0:\cdots :x_n]}$ be a weighted complete intersection defined by two quasihomogeneous polynomials $f_1(x_0,\ldots,x_n)$ and $f_2(x_0,\ldots,x_n)$ of degrees $d_1$ and $d_2$ respectively and neither $f_1$ nor $f_2$ defines a linear cone. 
    

    If $Y$ is quasi-smooth then the following conditions are satisfied:
    \begin{enumerate}
        \item Single-variable condition. For each $i\in \{0,\ldots,n\}$, at least one of the following conditions holds:
    \begin{itemize}
        \item $f_1$ contains a monomial of the form $x_i^{p_i}$;
        \item $f_2$ contains a monomial of the form $x_i^{q_i}$;
        \item both $f_1$ and $f_2$ contain monomials of the form $x_i^{p_i}x_{e_1}$ and $x_i^{q_i}x_{e_2}$, respectively, with $e_1\neq e_2$.
    \end{itemize}

    \item Subset condition. For every nonempty subset $\mathcal{I} = \{i_1,\ldots,i_{k}\}\subset\{0,\ldots,n\}$ with $k\geq 2$, at least one of the following conditions holds:
    \begin{itemize}
        \item both $f_1$ and $f_2$ contain monomials in the variables $\{x_{i_1},\ldots,x_{i_{k}}\}$ of degrees $d_1$ and $d_2$, respectively;
        \item $f_1$ contains such a monomial, and $f_2$ contains monomials of the form $x_{i_1}^{q_{1,\mu}}\cdots x_{i_{k}}^{q_{k,\mu}}x_{e_\mu}$ for $k-1$ distinct indices $e_\mu\notin\mathcal{I}$;
        \item $f_2$ contains such a monomial, and $f_1$ contains monomials of the form $x_{i_1}^{p_{1,\mu}}\cdots x_{i_{k}}^{p_{k,\mu}}x_{e_\mu}$ for $k-1$ distinct indices $e_\mu\notin\mathcal{I}$;
        \item for each $\mu=1,\ldots,k$, there exist monomials of the forms $x_{i_1}^{p_{1,\mu}}\cdots x_{i_{k}}^{p_{k,\mu}}x_{e_{1,\mu}}$ in $f_1$ and $x_{i_1}^{q_{1,\mu}}\cdots x_{i_{k}}^{q_{k,\mu}}x_{e_{2,\mu}}$ in $f_2$, where the sets $\{e_{1,\mu}\}$ and $\{e_{2,\mu}\}$ consist of distinct indices, and together they contain at least $k+1$ distinct indices outside $\mathcal{I}$.
    \end{itemize}
    \end{enumerate}
    
    Conversely, a general weighted complete intersection which satisfies the conditions above is quasi-smooth.
    
\end{lemma}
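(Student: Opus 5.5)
The plan is to translate quasi-smoothness into a rank condition on the Jacobian of $(f_1,f_2)$ on the affine cone $C_Y=\{f_1=f_2=0\}\subset\mathbb{A}^{n+1}_{\Bbbk}$. Since $f_1,f_2$ form a regular sequence, $C_Y$ is smooth at a point $p\neq 0$ if and only if the $2\times(n+1)$ Jacobian matrix $J(p)=(\partial f_j/\partial x_i(p))$ has rank $2$. We may extend scalars to an algebraic closure, since quasi-smoothness is insensitive to this. Both directions are then organized around the coordinate subspaces $\Pi_{\mathcal{I}}=\{x_j=0,\ j\notin\mathcal{I}\}$, which are $\mathbb{G}_m$-stable cones.

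\emph{Necessity.} Fix $\mathcal{I}$ with $|\mathcal{I}|=k$ and restrict to $\Pi_{\mathcal{I}}\cong\mathbb{A}^k$. Expand each $f_j$ in the variables $x_e$, $e\notin\mathcal{I}$. The restriction $f_j|_{\Pi_{\mathcal{I}}}$ is nonzero exactly when $f_j$ contains a monomial purely in $x_{\mathcal{I}}$. Along $\Pi_{\mathcal{I}}$, the column $\partial f_j/\partial x_e$ for $e\notin\mathcal{I}$ is the coefficient $g_{j,e}(x_{\mathcal{I}})$ of the linear term $x_e$. This is nonzero exactly when $f_j$ has a monomial $x_{\mathcal{I}}^{m}x_e$. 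There are three cases.
\begin{enumerate}
\item If neither restriction is nonzero, then $\Pi_{\mathcal{I}}\subset C_Y$. The Jacobian on $\Pi_{\mathcal{I}}$ has nonzero entries only in the columns $e$ with some $g_{j,e}\neq 0$. Its two rows are sections of quasi-homogeneous data on $\Pi_{\mathcal{I}}$. The locus where all $2\times2$ minors vanish is a nonempty cone (it contains $0$) cut out by few equations. A dimension count, using that for each fixed $e$ the minors are quasi-homogeneous, shows it meets $\Pi_{\mathcal{I}}\setminus\{0\}$ unless each row has at least $k$ independent nonzero columns and the two rows together involve at least $k+1$ distinct indices. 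This is the fourth bullet.
\item If exactly one restriction, say that of $f_1$, is nonzero, then $C_Y\cap\Pi_{\mathcal{I}}\supset\{f_1|_{\Pi_{\mathcal{I}}}=0\}$. This has dimension $\geq k-1\geq 1$ when $k\geq2$. There the second row of $J$ is supported on the columns $g_{2,e}$, and the same count forces $k-1$ distinct $e$'s. This gives the second and third bullets.
\item For $k=1$ the identical argument on the line $\Pi_{\{i\}}$ gives the single-variable condition.
\end{enumerate}
The linear cone hypothesis is used only to exclude the degenerate case where an $x_i$ itself appears linearly as a whole equation.

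\emph{Sufficiency.} Take $f_1,f_2$ general in the spaces of quasi-homogeneous polynomials of degrees $d_1,d_2$ spanned by all monomials of those degrees. Stratify $\mathbb{A}^{n+1}\setminus\{0\}$ by the tori $T_{\mathcal{I}}=\{x_i\neq0 \iff i\in\mathcal{I}\}$. On each $T_{\mathcal{I}}$, consider the restricted linear systems of degree $d_1$ and $d_2$. Off their base loci, Bertini's theorem (in characteristic zero) shows that general members are smooth and meet transversally. It therefore remains to check points of $T_{\mathcal{I}}$ lying in a base locus, i.e. the cases where $f_j|_{\Pi_{\mathcal{I}}}\equiv0$. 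There the Jacobian columns $g_{j,e}$ are general members of the spaces of coefficients of $x_e$. The listed monomial conditions guarantee enough of them to make the degeneracy locus empty on $T_{\mathcal{I}}$, by an incidence-variety dimension count over the parameter space of coefficients.

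I expect the main obstacle to be the sufficiency direction. The difficulty is making the incidence dimension count rigorous uniformly over all strata, and handling correctly the mixed cases where one equation has base locus along $\Pi_{\mathcal{I}}$ and the other does not. The first approach I would try is to reduce each stratum to the hypersurface case (Lemma~\ref{lem:quasi-smooth}) applied successively: first make $\{f_1=0\}$ quasi-smooth near $T_{\mathcal{I}}$, then apply Bertini to $f_2$ restricted to it.
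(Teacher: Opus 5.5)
The paper does not prove this lemma; it only quotes \cite[Theorem 8.7]{IF00}. Your strategy is the standard one behind that result: the Jacobian criterion on the affine cone, stratification by the tori $T_{\mathcal{I}}$, Bertini away from the base loci, and dimension counts on the $\mathbb{G}_m$-stable subspaces $\Pi_{\mathcal{I}}$. Your case split according to whether $f_1|_{\Pi_{\mathcal{I}}}$ and $f_2|_{\Pi_{\mathcal{I}}}$ vanish does produce the right bullets. However, one step of the necessity proof and the whole sufficiency direction do not work as written.

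\emph{The bound $k+1$ in the fourth bullet.} Write $S_j=\{e\notin\mathcal{I} : g_{j,e}\neq 0\}$. The conditions $|S_1|,|S_2|\geq k$ do follow from Krull's theorem applied to the entries of a single row. Here the linear-cone hypothesis guarantees that no $g_{j,e}$ is a nonzero constant, so these zero loci pass through $0$. The condition $|S_1\cup S_2|\geq k+1$ does not follow from counting equations. If it fails, then $S_1=S_2$ has exactly $k$ elements. The rank-$\leq 1$ locus of the $2\times k$ matrix $(g_{j,e})$ on $\Pi_{\mathcal{I}}\cong\mathbb{A}^k$ is then cut out by $\binom{k}{2}\geq k$ minors once $k\geq 3$, so the principal ideal theorem gives nothing. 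What is missing is the determinantal height bound (Eagon--Northcott): every minimal prime of the ideal of $2\times 2$ minors of a $2\times k$ matrix has height at most $k-1$. This ideal is homogeneous and proper, again by the linear-cone hypothesis. Hence its zero locus has a conic component of dimension $\geq 1$, which contains a point of $\Pi_{\mathcal{I}}\setminus\{0\}$ where the Jacobian has rank $\leq 1$.

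\emph{Sufficiency.} This direction is only announced, and the one concrete route you propose fails. The incidence argument does work, but it needs two inputs.
\begin{enumerate}
\item Among $2\times m$ matrices with row supports $S_1,S_2$, the rank-$\leq 1$ locus has codimension $\min(|S_1|,|S_2|,|S_1\cup S_2|-1)$. The fourth bullet says exactly that this codimension is at least $k=\dim T_{\mathcal{I}}$.
\item The cone argument. For fixed $x\in T_{\mathcal{I}}$, evaluation is a linear surjection from the coefficient space onto that space of matrices, because each entry is a general combination of monomials in $x_{\mathcal{I}}$, none of which vanishes on $T_{\mathcal{I}}$. So the incidence has dimension at most that of the parameter space, and a general fibre has dimension $\leq 0$. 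Being a $\mathbb{G}_m$-stable subset of $T_{\mathcal{I}}$, it must then be empty. Without this, a dimension count only yields finitely many bad points.
\end{enumerate}

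In the mixed case, say $f_1|_{\Pi_{\mathcal{I}}}\equiv 0\not\equiv f_2|_{\Pi_{\mathcal{I}}}$, you cannot ``first make $\{f_1=0\}$ quasi-smooth near $T_{\mathcal{I}}$''. When $f_1$ has exactly $k-1$ monomials $x_{\mathcal{I}}^{M}x_e$, the hypersurface $\{f_1=0\}$ is singular along the positive-dimensional cone $\{g_{1,e}=0\ \forall e\}\subset\Pi_{\mathcal{I}}$, which in general meets $T_{\mathcal{I}}$. Reversing the order and applying Bertini to $f_1$ on $\{f_2=0\}$ also says nothing, because $C_Y\cap T_{\mathcal{I}}$ lies in the base locus of $f_1$.

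The right order is as follows. Bertini for $f_2|_{T_{\mathcal{I}}}$ makes $W=C_Y\cap T_{\mathcal{I}}$ a smooth $(k-1)$-dimensional cone on which the $\mathcal{I}$-part of the second Jacobian row is nonzero. So rank $2$ on $W$ is equivalent to $(g_{1,e}(x))_{e\in S_1}$ not vanishing. The same incidence-plus-cone count, applied to $|S_1|\geq k-1$ general $g_{1,e}$ on $W$, shows they have no common zero there.
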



The following result asserts that singularities of quasi-smooth well-formed weighted complete intersections arise from those of ambient weighted projective space. 
\begin{proposition}[{\cite[Proposition 8]{Di86}}]
    Let $Y \subset \PP(a_0,\ldots,a_n)$ be a quasi-smooth, well-formed weighted complete intersection. Then its singular locus coincides with the intersection of $Y$ with the singular locus of $\PP(a_0,\ldots,a_n)$.
\end{proposition}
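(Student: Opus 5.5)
We need to show that $\Sing(Y)=Y\cap\Sing(\PP)$ for a quasi-smooth, well-formed weighted complete intersection $Y\subset\PP=\PP(a_0,\ldots,a_n)$. We work over an algebraic closure of $\Bbbk$, since both sides commute with base change, and use the local quotient description of $\PP$ and of $Y$. Cover $\PP$ by the charts $\mathbb{D}_+(x_i)$. Each chart is the quotient $\mathbb{A}^n/\mu_{a_i}$, obtained by setting $x_i=1$ in the affine cone, with $\mu_{a_i}$ acting diagonally with weights $(a_0,\ldots,\hat a_i,\ldots,a_n)$ on the coordinates $x_j$, $j\neq i$. Correspondingly, $Y\cap \mathbb{D}_+(x_i)=V_i/\mu_{a_i}$, where $V_i=C_Y\cap\{x_i=1\}$. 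Quasi-smoothness gives that $C_Y\setminus\{0\}$ is smooth, and $\{x_i=1\}$ is transverse to the $\mathbb{G}_m$-orbits. Hence $V_i$ is a smooth affine variety of dimension $\dim Y$ with a $\mu_{a_i}$-action.

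The inclusion $\Sing(Y)\subset Y\cap\Sing(\PP)$ is easy. At a point $p\in Y\setminus\Sing(\PP)$, the stabilizer in $\mu_{a_i}$ of a preimage $v\in V_i$ acts on $\mathbb{A}^n$ as a group generated by pseudo-reflections, or trivially. For a well-formed $\PP$, Proposition \ref{rem:sing} shows the stabilizer is in fact trivial outside $\Sing(\PP)$: the stabilizer of $v$ is $\mu_g$ with $g=\gcd\{a_j : x_j(v)\neq 0\}$, and $g=1$ exactly when $p\notin\Sing(\PP)$. So $V_i\to Y$ is étale near $v$ and $Y$ is smooth at $p$.

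For the reverse inclusion, take $p\in Y\cap\Sing(\PP)$. Then the stabilizer $G_v=\mu_g$ is nontrivial, and locally near $p$ we have $Y\cong T_vV_i/G_v$ analytically or étale-locally, by Luna's étale slice theorem (or linearization of a finite group action at a fixed point of a smooth variety). By the Chevalley–Shephard–Todd theorem, this quotient is smooth if and only if $G_v$ acts on $T_vV_i$ through a group generated by pseudo-reflections. The key step is therefore to rule out pseudo-reflections using well-formedness. Since $G_v=\mu_g$ is cyclic, a generator $\zeta$ that is a pseudo-reflection, or more generally any nontrivial element fixing a hyperplane of $T_vV_i$, would have a fixed locus $V_i^{\zeta}$ of codimension one near $v$. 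That fixed locus maps into $Y\cap\Sing(\PP)$, because points fixed by a nontrivial element of $\mu_{a_i}$ lie over $\Delta_I$ with $\gcd\{a_j : j\in I\}>1$. So $Y\cap\Sing(\PP)$ would have codimension one in $Y$, contradicting Definition \ref{def:well-formed2}. Hence the group generated by pseudo-reflections in $G_v$ is trivial. The action of $G_v$ on $T_vV_i$ is faithful, since a finite group acting on a smooth connected germ that fixes the tangent space is trivial. The quotient is therefore a non-trivial quotient singularity by a group with no pseudo-reflections, and Chevalley–Shephard–Todd shows it is singular. So $p\in\Sing(Y)$.

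The main obstacle is the reverse inclusion. It needs the local linearization at $v$ together with the identification of fixed loci of nontrivial elements with the strata $\Delta_I$. It also needs care when $\mathrm{codim}_Y(\text{fixed locus})=1$ occurs only for a subgroup $\mu_{g'}\subset\mu_g$; this case is handled by applying the same argument to the subgroup generated by all pseudo-reflections.
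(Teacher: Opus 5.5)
Your proof is correct. The paper gives no proof of this statement and only cites \cite[Proposition 8]{Di86}, and your argument is the standard one behind that result: you use the charts $Y\cap\mathbb{D}_+(x_i)=V_i/\mu_{a_i}$ with $V_i$ smooth by quasi-smoothness, you identify the stabilizers as $\mu_g$ with $g=\gcd\{a_j : x_j(v)\neq 0\}$ and detect them through Proposition~\ref{rem:sing}, you use well-formedness of $Y$ to exclude any nontrivial element of $\mu_g$ whose fixed locus has codimension at most one in $V_i$, and you conclude by Chevalley--Shephard--Todd.

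One point to tidy up is your stated reason for faithfulness. A finite-order automorphism that is trivial on $T_vV_i$ is trivial on the germ, but this only shows that the two kernels agree, not that they are trivial. Triviality of the kernel comes from the same codimension argument: an element acting trivially near $v$ fixes a whole component of $V_i$, whose image is open in $Y$ and contained in $\Sing(\PP)$. Your clause about nontrivial elements fixing a hyperplane already covers this case. For the same reason, your closing remark about subgroups $\mu_{g'}$ is unnecessary, since you exclude pseudo-reflections element by element.
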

The dualizing sheaf $\omega_Y$ of quasi-smooth weighted complete intersection is the coherent reflexive sheaf of rank one $    \omega_Y\cong i_*\omega_{Y_{\mathrm{reg}}}$, where $i\colon Y_{\mathrm{ref}} \hookrightarrow Y$ is the inclusion of the regular locus $Y_{\mathrm{reg}}$ of $Y$ and  $\omega_{Y_{\mathrm{reg}}}=\bigwedge^{\dim Y}\Omega_{Y_{\mathrm{reg}}/\Bbbk}$ is the canonical invertible sheaf of $Y_{\mathrm{reg}}$. Any integral Weil divisor $K_Y$ on $Y$ such that $\omega_Y\cong \mathcal{O}_Y(K_Y)$ is called a canonical divisor of $Y$. One of the significances and utilities of well-formed, quasi-smooth weighted complete intersections amounts to the following fact, so-called the adjunction formula:   
\begin{theorem}[{\cite[Theorem 3.3.4]{Dol81}}]\label{theorem:adjuction formula}
    Let $Y \subset \PP=\mathbb{P}(a_0,\ldots,a_n)$ be a quasi-smooth, well-formed weighted complete intersection of multidegree $(d_1,\ldots, d_c)$. Then: 
    \begin{equation*}
        \omega_Y\cong \bigg(\omega_{\PP}\otimes \mathcal{O}_{\PP}\Big(\sum_{i=1}^c d_i\Big)\bigg)\bigg|_Y\cong \mathcal{O}_{\PP}\Big(\sum_{i=1}^c d_i - \sum_{j=0}^n a_j\Big)\Big|_Y.
    \end{equation*}
\end{theorem}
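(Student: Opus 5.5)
The plan is to reduce to the smooth situation on the affine cone and then descend along the quotient by $\mathbb{G}_m$, checking the identification away from a subset of codimension at least two. Both $\omega_Y$ and $\mathcal{O}_{\PP}(\sum d_i-\sum a_j)|_Y$ are reflexive of rank one on the normal variety $Y$. It therefore suffices to produce an isomorphism on the open subset $Y^\circ=Y\setminus(Y\cap\Sing(\PP))$ and extend it by taking $i_*$. This extension is legitimate because well-formedness gives $\mathrm{codim}_Y(Y\cap \Sing(\PP))\geq 2$, and by the cited result of Dimca the complement of $Y^\circ$ contains $\Sing(Y)$. Here $\mathcal{O}_{\PP}(m)|_Y$ is to be understood as the reflexive sheaf $\mathcal{O}_Y(m)$ associated with the degree-$m$ part of the graded coordinate ring of $Y$. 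Well-formedness of $\PP$ is also what makes $\mathcal{O}_{\PP}(m)$ restrict to the correct sheaf on the regular locus.

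On $Y^\circ$ the first isomorphism is the classical adjunction computation, carried out in three steps.

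\begin{enumerate}
\item Over $\PP^\circ=\PP\setminus\Sing(\PP)$ the quotient map $\pi$ is a $\mathbb{G}_m$-torsor after removing codimension-two loci of non-free orbits. Well-formedness guarantees that the locus where the stabilizer is nontrivial has codimension at least two, both in $\PP$ and, by the codimension condition, in $Y$.
\item On the torsor we have the Euler sequence: the vector field $\sum a_ix_i\partial_{x_i}$ trivializes the relative tangent bundle. Contracting the volume form $dx_0\wedge\cdots\wedge dx_n$ with it yields a nowhere vanishing section of $\omega_{\PP^\circ}\otimes\mathcal{O}(\sum a_j)$. Hence $\omega_{\PP}\cong\mathcal{O}_{\PP}(-\sum a_j)$, which is consistent with the description of $K_{\PP}$ recalled above.
\item For $Y$, quasi-smoothness says that $C_Y\setminus\{0\}$ is a smooth complete intersection in $\mathbb{A}^{n+1}\setminus\{0\}$ cut out by $f_1,\ldots,f_c$. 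Its conormal bundle is therefore free with basis $df_1,\ldots,df_c$, and these transform with weights $d_1,\ldots,d_c$ under $\mathbb{G}_m$. The Poincaré residue of $dx_0\wedge\cdots\wedge dx_n/(f_1\cdots f_c)$, contracted with the Euler field, is a $\mathbb{G}_m$-semi-invariant generator of the canonical sheaf of the quotient, of weight $\sum d_i-\sum a_j$. It descends to an isomorphism $\omega_{Y^\circ}\cong\mathcal{O}_Y(\sum d_i-\sum a_j)|_{Y^\circ}$.
\end{enumerate}

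Alternatively, one can argue through the standard conormal sequence $0\to \mathcal{N}^\vee\to\Omega_{\PP}|_Y\to\Omega_Y\to0$ on the common smooth locus. There $\mathcal{N}^\vee\cong\bigoplus\mathcal{O}(-d_i)$, because the $f_i$ form a regular sequence and are transversal on the regular locus by quasi-smoothness; taking determinants gives the same formula.

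The second isomorphism in the statement is then immediate from $\omega_{\PP}\cong\mathcal{O}_{\PP}(-\sum a_j)$ and the multiplicativity $\mathcal{O}(a)\otimes\mathcal{O}(b)\cong\mathcal{O}(a+b)$, which holds on the locus where these sheaves are invertible and hence, after taking reflexive hulls, holds globally.

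The main obstacle is the descent step: checking that $\mathcal{O}_{\PP}(m)|_Y$ really agrees with the sheaf built from semi-invariants of weight $m$ on $C_Y$, and that no codimension-one locus with nontrivial stabilizer survives in $Y$. Without well-formedness, ramification divisors would appear in the formula (compare Example \ref{exa:well-formedness}). I would handle this by invoking the arithmetic criterion of Lemma \ref{lem:well-formed2}, which says precisely that every stratum $\Delta_I\cap Y$ with $\gcd\{a_i: i\in I\}>1$ has codimension at least two in $Y$.
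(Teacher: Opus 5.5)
Your proof is correct. The paper gives no argument of its own here: it only quotes \cite[Theorem 3.3.4]{Dol81}, so there is no internal proof to follow, and yours is a valid self-contained geometric route.

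\textbf{Step 1 can be tightened.} For a well-formed $\PP$, the non-free locus is exactly $\Sing(\PP)$. A nontrivial stabilizer $\mu_r$ cannot contain a pseudo-reflection, since that would force $n$ of the weights to share a common factor. Hence $\pi$ is already a $\mathbb{G}_m$-torsor over $\PP^\circ$, and no further removal is needed. Smoothness of $Y^\circ$ then follows directly from quasi-smoothness, without appealing to Dimca.

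\textbf{The rest of the argument.} The Euler field and the Poincar\'e residue trivialize the relevant twist of $\omega_{Y^\circ}$. Reflexivity on the normal variety $Y$, together with $\mathrm{codim}_Y(Y\cap\Sing(\PP))\geq 2$, then extends the isomorphism. The detour through Lemma \ref{lem:well-formed2} is unnecessary, because that codimension bound is the paper's definition of well-formedness.

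\textbf{Comparison with the algebraic route.} The other standard route is algebraic. $A_Y=\Bbbk[x_0,\ldots,x_n]/(f_1,\ldots,f_c)$ is a graded complete intersection, so the Koszul complex gives its graded canonical module as $A_Y(\sum d_i-\sum a_j)$. One then identifies $\omega_Y$ with the associated sheaf.
\begin{itemize}
\item That route gives the isomorphism on all of $Y$ at once, singular points included. The price is the commutative-algebra identification of $\omega_Y$ with the sheafified graded canonical module.
\item Your route shows exactly where each hypothesis enters. Quasi-smoothness, in its scheme-theoretic form $df_1\wedge\cdots\wedge df_c\neq 0$ on $C_Y\setminus\{0\}$, makes $Y$ normal, $Y^\circ$ smooth and the residue form nowhere vanishing. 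Well-formedness excludes codimension-one stabilizers, which is what makes $\mathcal{O}_{\PP}(m)$ invertible and multiplicative along $Y^\circ$ and lets the isomorphism extend.
\end{itemize}

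\textbf{Reading of the restriction.} You are right to read $\mathcal{O}_{\PP}(m)|_Y$ as the reflexive sheaf $\mathcal{O}_Y(m)$. The naive pullback can acquire torsion at singular points. For example, take the quasi-smooth, well-formed surface $\{tx+x^3+y^3+z^3=0\}\subset\PP(1,1,1,2)_{[x:y:z:t]}$ at $[0:0:0:1]$, with $m$ odd.

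\textbf{A small sign slip.} With the convention of your step 2 (a horizontal semi-invariant form of weight $w$ is a section of $\omega\otimes\mathcal{O}(w)$), the contracted residue of $dx_0\wedge\cdots\wedge dx_n/(f_1\cdots f_c)$ has weight $\sum a_j-\sum d_i$, not $\sum d_i-\sum a_j$. It is this weight that yields $\omega_{Y^\circ}\cong\mathcal{O}_Y(\sum d_i-\sum a_j)|_{Y^\circ}$.
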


\section{Cylindricity of weighted hypersurfaces}\label{section4}

Theorem \ref{theorem:adjuction formula} implies in particular that the canonical divisor of a well-formed quasi-smooth hypersurface of degree $d$ of a weighted projective space $\mathbb{P}(a_0,\ldots, a_n)$ is either ample if $d>\sum a_i$, or trivial if $d=\sum a_i$ or anti-ample otherwise. Since such a weighted hypersurface has at worst cyclic quotient singularities which guarantees the ${\mathbb Q}$-factoriality, 
Corollary \ref{cor:Kpseff+lc_implies_nocylinder} implies that a cylindrical weighted hypersurface $X$ of $\mathbb{P}(a_0,\ldots, a_n)$ is a Fano variety. In this section, we review results concerning the cylindricity of weighted Fano hypersurfaces in weighted projective spaces.


\subsection{A class of cylindrical Fano weighted hypersurfaces}\label{4-1}
Every (well-formed) quasi-smooth hypersurface $X\subset \mathbb{P}(a_0,\ldots, a_n)$, $n\geq 2$, of degree $d=a_i$ for some $i$, is a linear cone, isomorphic to the weighted projective space $\mathbb{P}(a_0,\ldots, \hat{a}_i,\ldots, a_n)$, whence, by Corollary \ref{thm:cylinder_wps}, is a cylindrical Fano variety. The next result provides another class of cylindrical Fano hypersurfaces: 


\begin{theorem} \label{thm:daiaj-cyl} A well-formed quasi-smooth hypersurface $X\subset \mathbb{P}(a_0,\ldots, a_n)$, $n\geq 3$, defined over a quadratically closed field $\Bbbk$ and of degree $d=a_i+a_j$ for some  $i\neq j$  contains an anti-canonically polar $\mathbb{A}^1$-cylinder.
\end{theorem}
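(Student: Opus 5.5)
Write $x=x_i$, $y=x_j$, so $d=a_i+a_j$; the remaining coordinates are $z_k$, $k\neq i,j$. Since $X$ is Fano-type with $\mathbb{Q}$-factorial class group of rank one (it is quasi-smooth and well-formed), it suffices to find a cylinder whose complement is the support of an effective divisor. Any effective divisor is $\mathbb{Q}$-linearly equivalent to a positive multiple of $-K_X$ by Theorem \ref{theorem:adjuction formula}, because $\mathcal{O}_X(1)$ generates rationally and $-K_X\sim \mathcal{O}_X(\sum a_k-d)$ with $\sum a_k - d>0$; we will check this positivity or use the Fano hypothesis. The defining polynomial has the form
\[ F = \alpha x y + x\,G(z) + y\,H(z) + Q(x,y,z) + R(z), \]
where $Q$ collects terms of degree $\geq 2$ in $x$ or in $y$ alone. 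For degree reasons a monomial $x^2\cdot(\cdots)$ has degree $2a_i+\cdots\le a_i+a_j$, so it forces $a_i<a_j$ and is of the form $x^m\cdot$(stuff) with the remaining factor not involving $y$. The plan is to first normalize $F$ by a weighted coordinate change so that it becomes quadratic in the pair $(x,y)$ in a controlled way.

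\emph{Step 1.} Quasi-smoothness, applied through Lemma \ref{lem:quasi-smooth} to the subset $\{i,j\}$, together with the fact that $X$ is not a linear cone, should force the monomial $xy$ to appear (or at least a suitable substitute). I expect the generic and main case to be $\alpha\neq 0$. Then, by replacing $x\mapsto x - H(z)/\alpha$ and $y\mapsto y - G(z)/\alpha$ (these are weighted homogeneous of the right degrees since $\deg H = a_i$ and $\deg G=a_j$), we aim to reach
\[ F = xy + R'(x,z) \]
with all $x$-powers absorbed. Terms $x^m S(z)$ can be absorbed into $y$ by $y\mapsto y - x^{m-1}S(z)$, so in fact $F=xy+R(z)$. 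Quadratic closedness of $\Bbbk$ enters when $a_i=a_j$. In that case the degree-$d$ part in $x,y$ is a binary quadratic form $q(x,y)$, which quasi-smoothness forces to be nondegenerate or otherwise handled, and over a quadratically closed field it factors as $xy$ after a linear change.

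\emph{Step 2.} On $X\cap\{x\neq 0\}$ the equation $xy=-R(z)$ solves for $y$. Hence $X\setminus\{x=0\}\cong \mathbb{D}_+(x)\subset \mathbb{P}(a_0,\ldots,\hat a_j,\ldots,a_n)$, the projection forgetting $y$. By Corollary \ref{cor:wps-principal_cyl}, this set is cylindrical: it is a quotient $\mathbb{A}^{n-1}/\mu_{a_i}$, and concretely we take the cylinder provided there, refined as in Proposition \ref{prop:adrien}. Its complement in $X$ is the support of $\{x=0\}$ together with possibly one more coordinate hyperplane section, all effective, hence $(-K_X)$-polar by the rank-one argument. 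Alternatively, we can use both $x$ and one $z_k$ with $\gcd(a_i,a_k)=1$ when one is needed to trivialize the $\mu_{a_i}$-quotient.

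\emph{Obstacle.} The main difficulty is Step 1 when $\alpha=0$. Then quasi-smoothness at the point $P_x=[1:0:\cdots:0]$ or $P_y$ must come from monomials $x^m z_k$ or $y^m z_k$, and we must still produce a variable appearing linearly to solve for. I would try to show that either $F$ is linear in some $z_k$ over the chart $\{x\neq0\}$, giving the same elimination argument, or that $X$ becomes a linear cone after a coordinate change. Failing that, I would use the quadratic-closure hypothesis to split the part of $F$ that is quadratic in $(x,y)$.
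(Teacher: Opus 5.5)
\textbf{There is a genuine gap in Step 1.} Your Step 2 is the paper's endgame: solve for the variable that occurs linearly, identify $X\cap\mathbb{D}_+(x)$ with a principal open subset of the smaller weighted projective space, and apply Corollary \ref{cor:wps-principal_cyl}. Your completion-of-the-product substitutions, suitably iterated, also do bring $F$ to the form $xy+R(z)$, provided $F$ contains $x_ix_j$ and, when $a_i=a_j$, the binary quadratic part $q(x,y)$ is nondegenerate. But quasi-smoothness forces neither condition, and your ``Obstacle'' paragraph leaves exactly this case open. Lemma \ref{lem:quasi-smooth} for $\mathcal{I}=\{i,j\}$ only gives some monomial $x^{m_1}y^{m_2}$ of degree $d$ (possibly a pure power) or monomials linear in other variables. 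Two examples:
\begin{itemize}
\item $x_1x_3+x_0^3+x_2^3$ defines a quasi-smooth, well-formed cubic in $\mathbb{P}(1,1,1,2)$ with $d=a_0+a_3$ but no monomial $x_0x_3$ (here $x_1x_3$ does the job).
\item The smooth quadric $x_0x_2+x_1x_3\subset\mathbb{P}^3$ has $q(x_0,x_1)=0$, so your claim that quasi-smoothness makes $q$ nondegenerate is false as stated.
\end{itemize}
Your proposed fallbacks (linear cones, splitting by quadratic closure) do not address this.

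\textbf{The missing idea} is that the product monomial only needs two variables of weights $a_i$ and $a_j$, not $x_i$ and $x_j$ themselves. The paper writes $F=h_d(x,y)+\sum_k z_kh_{d-a_k}(x,y)+H$ and argues by degree counting:
\begin{itemize}
\item A monomial $xz_k$ (resp.\ $yz_k$) forces $a_k=a_j$ (resp.\ $a_k=a_i$), so one simply re-indexes.
\item Otherwise the $h_{d-a_k}$ consist of pure powers $x^p$ or $y^q$ with $p,q\geq 2$, and these two types cannot coexist.
\item If moreover $xy$ is absent, the Jacobian criterion at the coordinate points of $y$ and of $x$ forces $h_d=\lambda x^2+\mu y^2$ with $\lambda\mu\neq 0$ and $a_i=a_j$. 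Only here is quadratic closedness used, to split $h_d$ into a product of two linear forms.
\end{itemize}
There is also a more direct route, assuming $X$ is not a linear cone:
\begin{itemize}
\item If $a_i<a_j$, Lemma \ref{lem:quasi-smooth} for $\mathcal{I}=\{j\}$ yields a monomial $x_jx_e$ with $a_e=a_i$, because $x_j^m$ and $x_j^mx_e$ with $m\geq 2$ are excluded by degree.
\item If $a_i=a_j$, the Jacobian criterion at points of the coordinate subspace of all variables of weight $a_i$ shows that the quadratic part of $F$ in all these variables, not just in $x,y$, is nondegenerate. That is the correct version of your claim about $q$.
\end{itemize}

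\textbf{A smaller point about polarity.} $\mathrm{Cl}(X)$ need not have rank one when $n=3$ (a smooth quadric surface is a counterexample), so you should not derive polarity from that. Instead, note that the complement of the cylinder is a union of sections $X\cap\{x_k=0\}\sim_{\mathbb{Q}}a_k\,\mathcal{O}_X(1)$. Since $-K_X\sim_{\mathbb{Q}}(\sum_l a_l-d)\,\mathcal{O}_X(1)$ with $\sum_l a_l-d>0$ because $n\geq 3$, an effective combination of these sections is $\mathbb{Q}$-linearly equivalent to $-K_X$.
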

\begin{proof}
The hypersurface $X$ is given by the vanishing of a quasi-homogeneous polynomial $F\in \Bbbk[z_0,\ldots, z_n]$ of weighted degree $d$ with respect to a system of variables $z_0,\ldots,z_n$ with respective weights $a_i$.  We claim that there exists a coordinate system $x_0,\ldots,x_n$ on $\Bbbk[z_0,\ldots,z_n]$ of respective weights $a_i$ and an isomorphism of graded $\Bbbk$-algebras $\Bbbk[x_0,\ldots, x_n]\cong \Bbbk[z_0,\ldots,z_n]$ of degree $0$ such that in the coordinate system $x_0,\ldots,x_n$, 
 $F$ has the form:  
\begin{equation}\label{eq:normal_form}
   F=x_{n-1}x_n+G(x_0,\ldots,x_{n-2})
\end{equation}
for some quasi-homogeneous polynomial $G$ of weighted degree $d$. This can be seen as follows: 

Without loss of generality, we can assume that $i=n-1$ and $j=n$.  We show first that $F$ contains either $z_{n-1}z_{n}$ or a term of the form $\lambda z_{n-1}^{2}+\mu z_{n}^{2}$ with $\lambda\mu\neq0$, or a monomial of the form $z_{i}z_{n-1}$ or $z_{i}z_{n}$ for some $i\in\{0,\ldots,n-2\}$. Write: $$F=h_{d}(z_{n-1},z_{n})+\sum_{i=0}^{n-2}z_{i}h_{d-a_{i}}(z_{n-1},z_{n})+H,$$ where $h_{e}$ is a quasi-homogeneous polynomial of weighted degree $e$ and $H$ is a quasi-homogeneous polynomial of weighted degree $d$ with no terms of degree $\leq1$ in the variables $z_{i}$, $i=0,\ldots,n-2$. It is straightforward to verify that for every two indices $i,j\in\{0,\ldots n-2\}$, possibly equal, for which there exist $p_{i},q_{j}\geq1$ such that $p_{i}a_{n-1}+a_{i}=q_{j}a_{n}+a_{j}=d=a_{n-1}+a_{n}$, we have either $p_{i}=1$ and $a_{n}=a_{i}$ or $q_{j}=1$ and $a_{n-1}=a_{j}$.
If one of the polynomials $h_{d-a_{i}}$, $i\in\{0,\ldots,n-2\}$ contains $z_{n-1}$ or $z_{n}$ among its monomials, then we are done. Otherwise, the previous arithmetic observation implies that, up to exchanging $z_{n-1}$ and 
$z_{n}$, we have, for all $i\in\{0,\ldots,n-2\}$, $h_{d-a_{i}}=\alpha_{i}z_{n-1}^{p_{i}}$ for some $\alpha_{i}\in \Bbbk$ and $p_{i}\geq2$. The restriction of the gradient of $F$ to the plane $\Pi=\{z_{0}=\cdots z_{n-2}=0\}\subset\mathbb{A}^{n+1}_{\Bbbk}$ then has the form $$\nabla F|_{\Pi}=\left((\alpha_{\ell}z_{n-1}^{p_{\ell}})_{\ell=0,\ldots,n-2},\frac{\partial h_{d}}{\partial z_{n-1}},\frac{\partial h_{d}}{z_{n}}\right).$$ 
It follows that $h_{d}$ contains a monomial of degree $\leq1$ in $z_{n-1}$, for otherwise $C_{X}$ would be singular along the coordinate line $\{z_{0}=\cdots=z_{n-1}=0\}$. So either $h_{d}$ contains the monomial $z_{n-1}z_{n}$ and we are done, or it is of the form $\lambda z_{n-1}^{p}+\mu z_{n}^{q}$, with $p,q\geq2$ and $\mu\neq0$. If $\lambda=0$ then, by Lemma \ref{lem:quasi-smooth} (1) applied to $I=\{n-1\}$, we have $\alpha_{i}\neq0$ for some $i\in\{0,\ldots,n-2\}$ and then, $a_{n}+a_{n-1}=qa_{n}=p_{i}a_{n-1}+a_{i}$ which is impossible since $p_{i},q\geq2$ and $a_{i},a_{n}\geq1$. 
Thus, $\lambda\neq0$ and since $a_{n-1}+a_{n}=d$, the only possibility is $p=q=2$ and we are done. 

\medskip 

So up to re-indexing the variables $z_0,\ldots, z_n$, we can now assume without loss of generality that $F$ contains either a monomial $z_{n-1}z_{n}$ or, up to a scalar multiplication, a term of the form $z_{n-1}^{2}+\mu z_{n}^{2}$ with $\mu\neq0$. The second case can occur if and only if $a_{n-1}=a_n$ and since, by assumption, $\Bbbk$ is quadratically closed, it then reduces to the first one after the homogeneous coordinate change $(z_{n-1},z_{n})\mapsto(z_{n-1}-\nu z_{n},z_{n-1}+\nu z_{n})$ of degree $0$ for some square root $\nu$ of $-\mu$ in $\Bbbk^*$. We can therefore assume that $F$ contains $z_{n-1}z_n$ and write $$F=z_{n}(z_{n-1}+F_{1}(z_{0},\ldots,z_{n-2}))+F_{2}(z_{0},\ldots,z_{n-1}),$$ where $F_{1}$ and $F_{2}$ are quasi-homogeneous polynomials of weighted degrees $a_{n-1}$ and $d$ respectively. After making the homogeneous coordinate change $x_{n-1}=z_{n-1}+F_{1}(z_{0},\ldots,z_{n-2})$ of degree $0$, we have: $$F=x_{n-1}(z_{n}+F_{3}(z_{0},\ldots,z_{n-2}, x_{n-1}))+F_{4}(z_{0},\ldots,z_{n-2})$$ and then, making the homogeneous coordinate change $x_{n}=z_{n}+F_{3}(x_{0},\ldots,z_{n-2}x_{n-1})$ and $x_i=z_i$ for $i\in\{0,\ldots, n-2\}$ of degree $0$, brings $F$ into the desired form \eqref{eq:normal_form}.

To conclude the proof, we observe that the rational projection: $$\pi:\mathbb{P}(a_0,\ldots, a_n)_{[x_0:\cdots x_n]}\dashrightarrow \mathbb{P}(a_0,\ldots,a_{n-1})_{[y_0:\ldots, y_{n-1}]}, \quad [x_0:\ldots:x_n]\mapsto [x_0:\ldots :x_{n-1}]$$ induces a birational map $X\dashrightarrow \mathbb{P}(a_0,\ldots,a_{n-1})$  restricting to an isomorphism: $$X\cap \mathbb{D}_+(x_{n-1})\cong \mathbb{D}_+(y_{n-1}) \subset \mathbb{P}(a_0,\ldots,a_{n-1}).$$ 
Corollary \ref{cor:wps-principal_cyl} and its proof imply in turn that $\mathbb{D}_+(y_{n-1})$ contains a cylinder $U$ whose complement is the support of an effective $\mathbb{Q}$-divisor $D\sim_{\mathbb{Q}} mH$ for some $m$, where $H$ is any ample Weil $\mathbb{Q}$-divisor on $\PP(a_0,\ldots,a_{n-1})$. 
Together with the adjunction formula of Theorem \ref{theorem:adjuction formula}, this implies that $U\subset\mathbb{D}_+(y_{n-1} )\cong X\cap \mathbb{D}_+(x_{n-1})\subset X$ is  an anti-canonical polar cylinder.
\end{proof}

\begin{remark} In Theorem \ref{thm:daiaj-cyl}, the hypothesis that the base field $\Bbbk$ is quadratically closed cannot be relaxed in general. For instance, the real anisotropic smooth quadric surface: $$Q_2=\{x_0^2+x_1^2+x_2^2+x_3^2=0\}\subset \mathbb{P}^3_{\mathbb{R}}$$ is birationally ruled but not cylindrical. Indeed $Q_2$ is isomorphic to the product $Q_1\times_{\mathbb{R}} Q_1$, where $Q_1\subset \mathbb{P}^2_{\mathbb{R}}$ is the anisotropic real conic $\{x_0^2+x_1^2+x_2^2=0\}$. Since the fiber of the projection $\mathrm{pr}_2:Q_1\times_{\mathbb{R}} Q_1\to Q_1$ over the generic point of $Q_1$ has a $\Bbbk(Q_1)$-rational point, it follows that  $Q_1\times_{\mathbb{R}}Q_1$ is birationally isomorphic over $Q_1$ to the total space of the trivial $\mathbb{P}^1$-bundle 
 $\mathbb{P}^1_{\mathbb{R}}\times_{\mathbb{R}} Q_1\to Q_1$. 
 On the other hand, since $Q_2(\mathbb{R})=\emptyset$, the existence of a cylinder in $Q_1\times_{\mathbb{R}}Q_1$ would imply that it is  isomorphic over $Q_1$ to the total space of the trivial $\mathbb{P}^1$-bundle $\mathbb{P}^1_{\mathbb{R}}\times_{\mathbb{R}} Q_1\to Q_1$, which is impossible since the latter does not admit any closed embedding as quadric surface in $\mathbb{P}^3_{\mathbb{R}}$. 

To our knowledge the question of existence of cylindrical smooth real isotropic quadrics $Q_{n-1}=\{\sum_{i=0}^nx_i^2=0\}\subset \mathbb{P}^n_{\mathbb{R}}$
in higher dimension is an open problem. In this direction, it follows from \cite{Kar03} 
and \cite[Theorem 6.4]{Kar00}
 that if $n=2^s$ for some $s\geq 2$ then $Q_{n-1}$ is not birationally ruled, whence not cylindrical. In particular, the smooth real isotropic quadric threefold $Q_3$ is not cylindrical.   
\end{remark}

\begin{corollary}
    \label{thm:H-polar}
A well-formed quasi-smooth hypersurface $X\subset \mathbb{P}(a_0,\ldots, a_n)$, $n\geq 4$, defined over a quadratically closed field $\Bbbk$ and of degree $d=a_i+a_j$ 
for some  $i\neq j$  
contains an $H$-polar cylinder for every ample divisor $H$.

\end{corollary}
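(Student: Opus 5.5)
The plan is to reduce the statement to Theorem~\ref{thm:daiaj-cyl} by showing that, when $n\geq 4$, every ample divisor on $X$ is $\mathbb{Q}$-linearly equivalent to a positive rational multiple of $-K_X$. Then the anti-canonically polar cylinder produced there is automatically $H$-polar once the coefficients of the boundary divisor are rescaled. The argument has three steps.

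\textbf{Step 1: the divisor class group.} Since $n\geq 4$, $X$ has dimension $\geq 3$. For a quasi-smooth, well-formed hypersurface of dimension at least $3$ in a weighted projective space, a Grothendieck--Lefschetz type theorem holds: the restriction $\mathrm{Cl}(\PP(a_0,\ldots,a_n))\to \mathrm{Cl}(X)$ is an isomorphism, in the form due to Dolgachev and Mori for quasi-smooth weighted complete intersections. In particular $\mathrm{Cl}(X)\otimes\mathbb{Q}\cong\mathbb{Q}$, generated by the class of $\mathcal{O}_X(1)$. Because $X$ is $\mathbb{Q}$-factorial, with only cyclic quotient singularities, every Weil $\mathbb{Q}$-divisor is $\mathbb{Q}$-Cartier. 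Hence every effective nonzero Weil divisor on $X$ is $\mathbb{Q}$-linearly equivalent to a positive multiple of $\mathcal{O}_X(1)$; positivity comes from intersecting with a curve cut out by general hypersurfaces. The same holds for every ample $H$, and for $-K_X$ by Theorem~\ref{theorem:adjuction formula}. This is the step I expect to be the main obstacle. It is where the hypothesis $n\geq 4$ enters, and one must make sure the cited Lefschetz statement applies in the quasi-smooth, well-formed setting. For $n=3$ the statement genuinely fails, since del Pezzo surfaces can have higher Picard rank.

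\textbf{Step 2: the boundary of the cylinder.} Take the cylinder $U\subset X\cap\mathbb{D}_+(x_{n-1})$ constructed in the proof of Theorem~\ref{thm:daiaj-cyl}, using the normal form $F=x_{n-1}x_n+G$. Its complement $X\setminus U$ is the union of two pieces: the hyperplane section $X\cap\{x_{n-1}=0\}$, and the closure of the complement of $U$ inside the affine variety $\mathbb{D}_+(y_{n-1})\cong X\cap\mathbb{D}_+(x_{n-1})$. Since $U$ is affine and $X$ is normal, $X\setminus U$ has pure codimension one. So $X\setminus U=\bigcup_k B_k$ for finitely many prime divisors $B_k$.

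\textbf{Step 3: rescaling.} By Step~1, for each $k$ we have $B_k\sim_{\mathbb{Q}}\beta_k\,\mathcal{O}_X(1)$ with $\beta_k>0$, and the ample divisor satisfies $H\sim_{\mathbb{Q}} h\,\mathcal{O}_X(1)$ with $h>0$. Choose positive rationals $c_k$ with $\sum_k c_k\beta_k=h$, for instance $c_k=h/(N\beta_k)$ where $N$ is the number of components. Then $D=\sum_k c_kB_k$ is an effective $\mathbb{Q}$-divisor, $D\sim_{\mathbb{Q}}H$, and $\mathrm{Supp}(D)=X\setminus U$. Hence $U$ is an $H$-polar cylinder, which proves the corollary.
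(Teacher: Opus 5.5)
Your proposal is correct and follows the same route as the paper. The paper invokes a Lefschetz-type theorem (Dimca's $\mathrm{Pic}(X)\cong\mathbb{Z}$ for $\dim X\geq 3$, combined with $\mathbb{Q}$-factoriality), so every ample $H$ is a positive rational multiple of $-K_X$ and the anti-canonically polar cylinder of Theorem~\ref{thm:daiaj-cyl} is $H$-polar. Your Steps 2--3 make this rescaling explicit, and in fact show that once the class group has rank one any cylinder is $H$-polar for every ample $H$, though multiplying the anti-canonical boundary divisor by one positive rational would already suffice.
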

\begin{proof} Since $X$ is $\mathbb{Q}$-factorial and $\dim X\geq 3$, the conclusion follows from \cite[Theorem 3.2.4]{Di86} which asserts that $\mathrm{Pic}(X)\cong \mathbb{Z}$. 
\end{proof}
\subsection{Non-Cylindrical Fano Hypersurfaces}\label{4-2}

\subsubsection{Examples of non-cylindrical del Pezzo hypersurfaces}
\label{subsec:NonCyl-dP}
In this subsection we
review a selection of known results concerning the anti-canonical polar cylindricity of del Pezzo hypersurfaces defined over an algebraically closed field $\Bbbk$ of characteristic zero, in connection to Conjecture \ref{Conj:CPPZ} relating the non-existence of anti-canonical polar cylinders to $K$-polystability. 

\medskip
Quasi-smooth, well-formed del Pezzo hypersurfaces $S\subset \mathbb{P}(a_0,a_1,a_2,a_3)$ are classified in terms of the tuple of integers $\underline{a}=(a_0,a_1,a_2,a_3)$ and their degree $d$, see \cite{Pae16}. The classification can be divided into thirty five infinite families of pairs $(\underline{a},d)$   
depending on a positive integer parameter \cite[Table 7]{Pae16}, reproduced in Table \ref{tab:hogehoge} and a finite number of so-called sporadic cases. The non-existence of anti-canonical polar cylinders for several of these surfaces has been verified as a consequence of Theorem \ref{theorem:alpha cylinder} and of the classification of such hypersurfaces with $\alpha$-invariant greater than or equal to $1$, see e.g., \cite{JK01,CPS10}. However, it turns to exist infinitely many families of del Pezzo hypersurfaces $S$ such that $\alpha(S) < 1$ and for these, the question of existence or non-existence of anti-canonical polar cylinder requires a case-by-case analysis using  list in \cite{Pae16}. 
Several sporadic case of this type, including for instance smooth del Pezzo surfaces of degree $3$ and $2$, have also been verified not to have anti-canonical polar cylinders by means of more advanced methods, see \cite{CPW0,CPW}. On the other hand, it was recently established that all but except possibly finitely many surfaces classes of surfaces belonging to the thirty five infinite families do not admit an anti-canonical polar cylinder, namely:  
\begin{theorem}[{\cite[Theorem 4.7]{KKW24}}]\label{thm:KKW24}
    No quasi-smooth member in the thirty five infinite families  in Table  \ref{tab:hogehoge}, contains an anti-canonical polar cylinder for $n > 2$.
\end{theorem}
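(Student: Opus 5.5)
We argue by contradiction, combining Corollary \ref{cor:cyl_implies_notlc} with a local analysis of the anti-canonical linear system on each family. Suppose that a quasi-smooth member $S\subset \mathbb{P}(a_0,a_1,a_2,a_3)$ of one of the thirty five families, with parameter $n>2$, contains an anti-canonically polar cylinder $U=S\setminus \mathrm{Supp}(D)$, where $D\sim_{\mathbb{Q}}-K_S$ is effective. Since $S$ has only cyclic quotient singularities, it is $\mathbb{Q}$-factorial with $\mathrm{Cl}(S)\otimes\mathbb{Q}\cong\mathbb{Q}$. By Corollary \ref{cor:cyl_implies_notlc}, the pair $(S,D)$ is not log canonical at some point $p\in\mathrm{Supp}(D)$. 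Moreover, every fiber closure of the $\mathbb{A}^1$-fibration on $U$ meets $\mathrm{Supp}(D)$ in exactly one point. Hence $\mathrm{Supp}(D)$ has at least as many irreducible components as the Picard rank of the minimal resolution minus the rank of the ruling data. In particular the number of components of $D$ is bounded below, and we shall use this in the final step.

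The main step is to show, family by family, that for every effective $D\sim_{\mathbb{Q}}-K_S$ the pair $(S,D)$ is log canonical outside a small set, and to analyse that set. For each $(\underline{a},d)$ in Table \ref{tab:hogehoge} we write down the defining polynomial in normal form, using Lemma \ref{lem:quasi-smooth} and quasi-homogeneous coordinate changes as in the proof of Theorem \ref{thm:daiaj-cyl}. We then read off the singular points (the coordinate vertices and the points on the edges $\Delta_I$, via Proposition \ref{rem:sing}) and the curves $C_{x_i}=S\cap\{x_i=0\}$. At a smooth point $p$, or at a point in the regular locus away from the low-degree curves, we take a general member of a suitable linear system $|\mathcal{O}_S(m)|$ that passes through $p$ and does not contain any component of $D$. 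Computing the intersection number $D\cdot C\le -K_S\cdot C=(a_0+a_1+a_2+a_3-d)m/(a_0a_1a_2a_3/d)$, we get $\mathrm{mult}_p D<1$, which is log canonical. At a cyclic quotient point $\frac{1}{r}(u,v)$ we pass to the orbifold chart and use the same bound multiplied by $r$. This excludes all points except those lying on a few special curves $C_{x_i}$ of small degree. There, the anti-canonical degree is small because the weights grow linearly in $n$ while $-K_S$ stays bounded; this is precisely where the assumption $n>2$ is needed.

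For the remaining points we use the standard convexity trick. If $D$ contains a special curve $C=C_{x_i}$, and $C\sim_{\mathbb{Q}}-\lambda K_S$ with the pair $(S,C)$ log canonical, we replace $D$ by $D_t=(1+t)D-tC$ with $t$ maximal such that $D_t$ stays effective. Then $D_t$ no longer contains $C$ (or some other component of $D$), it remains $\mathbb{Q}$-linearly equivalent to $-K_S$, and it is still not log canonical at $p$. Next we apply inversion of adjunction along $C$, $(D_t\cdot C)_p>1/r$, and contradict this with the global bound $D_t\cdot C\le -K_S\cdot C$.

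The main obstacle is the case where the non-log-canonical centre is forced to be a singular point of large index $r$ (of order $n$). There, $-K_S\cdot C$ is only about $1/r$, so the local inequality becomes sharp. In that case I would instead use the structure of the cylinder directly: the boundary $\tilde{H}_\infty$ must be exceptional over $p$ with discrepancy $a_j\le-2$. This forces every component of $D$ through $p$, and the closures of the $\mathbb{A}^1$-fibres pass through a single point. Computing on the toric weighted blow-up of $p$, I would show that the resulting curves through $p$ cannot have small enough degree, so no such cylinder exists.
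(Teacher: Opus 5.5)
Your first steps agree with the paper's argument, which it carries out for family \textnumero~4 as an illustration of \cite{KKW24}. Intersecting with a curve of $|(10n-8)H_x|$ through $p$, and with $C_x$ at points of index $r$, shows that a non-log-canonical point must lie on $C_x$ and that $C_x\subset\mathrm{Supp}(D)$.

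The gap is in the step meant to remove $C_x$ from $D$. Your convexity trick needs an effective $T\sim_{\mathbb{Q}}-K_S$ built from the special curve with $(S,T)$ log canonical at $p$. As written, $(1+t)D-tC$ is not even anticanonical unless $\lambda=1$: one must subtract $t\lambda^{-1}C$, and the relevant hypothesis is then on $(S,\lambda^{-1}C)$, not on $(S,C)$. In family \textnumero~4 the only anticanonical divisor supported on $C_x$ is $nC_x$ with $n\ge 2$, while $\mathrm{lct}(S,C_x)\le 5/6$. More generally, in families 1--22 one has $-K_S\sim_{\mathbb{Q}}\frac{I}{a_0}C_x$ with $I/a_0>1$. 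So $(S,T)$ is not log canonical anywhere on $C_x$, and the convexity lemma gives nothing. The failure of log canonicity of $(S,D)$ at $p$ may come entirely from the term $d_1C_x$ with $d_1>1$, which no local intersection estimate can rule out. Your last paragraph (``the resulting curves through $p$ cannot have small enough degree'') does not supply a substitute.

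The missing idea, which the paper takes from \cite[Lemma A.3]{CPW0}, is that the non-log-canonicity forced by the cylinder depends only on $\mathrm{Supp}(D)$. When $\varphi\colon S\dashrightarrow\mathbb{P}^1$ has a base point $p$, $\tilde{H}_\infty=E_j$ is exceptional over $p$, and a general fibre $\ell$ meets no other boundary component. Hence \eqref{eq:ram_1} and \eqref{eq:ram_2} give $a_j=-2$ for \emph{every} effective $D''\sim_{\mathbb{Q}}-K_S$ with $\mathrm{Supp}(D'')\subseteq\mathrm{Supp}(D)$. Writing $D=d_1C_x+R$ with $d_1<n$, one applies this to $D'=\frac{n}{n-d_1}R$. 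This divisor does not contain $C_x$, so it is log canonical everywhere by the local bounds, which is a contradiction.

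Two further inputs are needed that your sketch lacks:
\begin{enumerate}
\item You need $d_1<n$, i.e.\ that $S\setminus C_x$ is not itself a cylinder. The paper gets this from $\mathrm{rk}\,\mathrm{Pic}(S\setminus C_x)=8$ via \eqref{eq:dP6-open}.
\item You must treat the case where $\varphi$ is a morphism. There $C_x$ is the section with $d_1=2$, and $\frac{n}{n-2}R$ would be ample yet trivial on fibres.
\end{enumerate}
You cannot dismiss the second case by Picard rank. Your claim $\mathrm{Cl}(S)\otimes\mathbb{Q}\cong\mathbb{Q}$ is false here: the same identification gives rank $9$ in family \textnumero~4, and $\mathrm{Pic}\cong\mathbb{Z}$ is a statement for $\dim\ge 3$. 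With rank one, your component count could not exclude $D=nC_x$ anyway.

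Finally, families 23--35 need none of this: $\alpha(S)=1$ by \cite{CPS10}, so Theorem \ref{theorem:alpha cylinder} applies. Also, the index is not bounded in families 1--22; only $K_S^2$ is.
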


\begin{remark} In Table  \ref{tab:hogehoge}, the infinite series \textnumero~23 - 35 which all have bounded index $1\leq I\leq 6$ were shown earlier in \cite{CPS10} to have $\alpha$-invariant equal to $1$, so the fact that they do not admit any anti-canonical polar cylinder follows from Theorem \ref{theorem:alpha cylinder}.
\end{remark}

\begin{remark}    
The last three columns in Table  \ref{tab:hogehoge} display what is known so far concerning the $K$-stability or $K$-unstability of the surfaces in the thirty five infinite families, see \cite{KKW24} for the details . It shows in particular that for $n\geq 4$, the surfaces \textnumero~1 to 22 
are $K$-unstable and do not contain anti-canonical polar cylinders, whence are counter-examples to Conjecture \ref{Conj:CPPZ}.
\end{remark}

\begin{table}[htbp]
    \centering
    \caption{Quasi-smooth, well-formed del Pezzo hypersurfaces: Infinite Series}
    \label{tab:hogehoge}
    \rowcolors{2}{gray!15}{white}
    \begin{tabular}{@{}c|l|c|c|c|c@{} }
        \toprule
        \textnumero & $(a_0,a_1,a_2,a_3)$ & degree & $K$-stable & $K$-unstable & Unknown\\
        \midrule
            1 & $(1,3n-2,4n-3,6n-5)$ & $12n-9$ & $n=1, 2$ & $n\geq 3$  &\\
            2 & $(1,3n-2,4n-3,6n-4)$ & $12n-8$ & $n=1, 2$ & $n\geq 3$  &\\
            3 & $(1,4n-3,6n-5,9n-7)$ & $18n-14$ & $n=1, 2$ & $n\geq 3$  &\\
            4 & $(1,6n-5,10n-8,15n-12)$ & $30n-24$ & $n=1, 2$ & $n\geq 3$  &\\
            5 & $(1,6n-4,10n-7,15n-10)$ & $30n-20$ & $n=1, 2$ & $n\geq 3$  &\\
            6 & $(1,6n-3,10n-5,15n-8)$ & $30n-15$ & $n=1$ & $n\geq 2$  &\\
            7 & $(1,8n-2,12n-3,18n-5)$ & $36n-9$ &  & $n\geq 1$  &\\
            8 & $(2,6n-3,8n-4,12n-7)$ & $24n-12$ & $n=1$ & $n\geq 4$ & $n=2, 3$\\
            9 & $(2,6n+1,8n+2,12n+3)$ & $24n+6$ & & $n\geq 3$ & $n=1, 2$\\
            10 & $(3,6n+1,6n+2,9n+3)$ & $18n+6$ & & $n\geq 3$ & $n=1, 2$\\
            11 & $(7,28n-18,42n-27,63n-44)$ & $126n-81$ & & $n\geq 4$ & $n=1, 2, 3$\\
            12 & $(7,28n-17,42n-29,63n-40)$ & $126n-80$ & $n=1$ & $n\geq 3$ & $n=2$\\
            13 & $(7,28n-13,42n-23,63n-31)$ & $126n-62$ & & $n\geq 3$ & $n=1, 2$\\
            14 & $(7,28n-10,42n-15,63n-26)$ & $126n-45$ & & $n\geq 3$ & $n=1, 2$\\
            15 & $(7,28n-9,42n-17,63n-22)$ & $126n-44$ & & $n\geq 3$ & $n=1, 2$\\
            16 & $(7,28n-6,42n-9,63n-17)$ & $126n-27$ & & $n\geq 3$ & $n=1, 2$\\
            17 & $(7,28n-5,42n-11,63n-13)$ & $126n-26$ & & $n\geq 3$ & $n=1, 2$\\
            18 & $(7,28n-2,42n-3,63n-8)$ & $126n-9$ & & $n\geq 3$ & $n=1, 2$\\
            19 & $(7,28n-1,42n-5,63n-4)$ & $126n-8$ & & $n\geq 3$ & $n=1, 2$\\
            20 & $(7,28n+2,42n+3,63n+1)$ & $126n+9$ & & $n\geq 3$ & $n=1, 2$\\
            21 & $(7,28n+3,42n+1,63n+5)$ & $126n+10$ & & $n\geq 3$ & $n=1, 2$\\
            22 & $(7,28n+6,42n+9,63n+10)$ & $126n+27$ & & $n\geq 3$ & $n=1, 2$\\
            23 & $(2,2n+1,2n+1,4n+1)$ & $8n+4$ & $n\geq 1$ & &\\
            24 & $(3,3n,3n+1,3n+1)$ & $9n+3$ & $n\geq 1$ & &\\
            25 & $(3,3n+1,3n+2,3n+2)$ & $9n+6$ & $n\geq 1$ & &\\
            26 & $(3,3n+1,3n+2,6n+1)$ & $12n+5$ & $n\geq 1$ & &\\
            27 & $(3,3n+1,6n+1,9n)$ & $18n+3$ & $n\geq 1$ & &\\
            28 & $(3,3n+1,6n+1,9n+3)$ & $18n+6$ & $n\geq 1$ & &\\
            29 & $(4,2n+3,2n+3,4n+4)$ & $8n+12$ & $n\geq 1$ & &\\
            30 & $(4,2n+3,4n+6,6n+7)$ & $12n+18$ & $n\geq 1$ & &\\
            31 & $(6,6n+3,6n+5,6n+5)$ & $18n+15$ & $n\geq 1$ & &\\
            32 & $(6,6n+5,12n+8,18n+9)$ & $36n+24$ & $n\geq 1$ & &\\
            33 & $(6,6n+5,12n+8,18n+15)$ & $36n+30$ & $n\geq 1$ & &\\
            34 & $(8,4n+5,4n+7,4n+9)$ & $12n+23$ & $n\geq 1$ & &\\
            35 & $(9,3n+8,3n+11,6n+13)$ & $12n+35$ & $n\geq 1$ & &\\
        \bottomrule
        \rowcolor{white}
        \multicolumn{3}{l}{\footnotesize $n$ denotes a positive integer.}
    \end{tabular}
\end{table}

\medskip

We now give an illustration of the methods employed in \cite{KKW24} to establish Theorem \ref{thm:KKW24} by considering the case of family \textnumero~4, that is, quasi-smooth well-formed del Pezzo hypersurfaces: $$S:=S_{30n-24}\subset \mathbb{P}(1,6n-5,10n-8,15n-12)_{[x:y:z:t]}$$ of degree $d=30n-24$ and index $I=n$. For $n=1$, it is simply a smooth del Pezzo surface of degree $6$ in $\mathbb{P}(1,1,2,3)$. We henceforth assume that $n\geq 2$. Then up to a coordinate change, $S$ is defined by a quasi-homogeneous polynomial of the form: 
\begin{equation}
\label{eq:S_30n-24}
F_n(x,y,z,t)=t^2+z^3+zx^{2n-1}f_{18n-15}(x,y)+x \big{(} y^5+g_{30n-25}(x,y)+x^{30n-25} \big{)},\end{equation}
where $f_{18n-25}(x,y)$ and $g_{30n-25}(x,y)$ are appropriate quasi-homogeneous polynomials such that $g_{30n-25}$ contains no monomials in $y^5$, $y^4x^{6n-5}$ and $x^{30n-25}$. 

We denote by $C_x=\{x=t^2+z^3=0\}\subset S$ the intersection of $S$ with the hyperplane $H_x=\{x=0\}\subset \mathbb{P}(1,6n-5,10n-8,15n-12)$. By the quasi-smoothness assumption, $S\setminus C_x=S\cap \mathbb{D}_+(x)$ is a smooth $\Bbbk$-scheme. On the other hand, $S$ has singularities supported on $C_x$, which are cyclic quotient singularities of type $\frac {1}{r}(1,a)$ where $r$ divides either $6n-5$, $10n-8$ or $15n-12$. Moreover, it is straightforward to verify from the Jacobian criterion that if $S=Z(F_n)$ is quasi-smooth then the surface $Z(F_1)$ in $\mathbb{P}(1,1,2,3)$ is a smooth del Pezzo surface of degree $1$ for which we have an isomorphism: 
\begin{equation}
    \label{eq:dP6-open}
S\setminus C_x=Z(F_n)\cap \mathbb{D}_+(x)\cong Z(F_1)\cap \mathbb{D}_+(x)=V \big{(} F_1(1,y,z,t) \big{)}\subset \mathbb{A}^3_{\Bbbk}
\end{equation}


It is readily verified by a weighted blow-up computation that the log-canonical threshold of the log pair $(S,C_x)$ is at most $5/6$. Since $-K_S\sim nC_x$ it follows that: 
$$\alpha(S)\leq \mathrm{lct}(S,nC_x)\leq \frac{5}{6n}<\frac{1}{3},$$
whence, by inequality \eqref{eq:alpha_delta} that $\delta(S)< 1$ 
which implies, by Theorem \ref{thm:BJ}, that $S$ is not $K$-semistable. We now turn to the non-existence of anti-canonical polar cylinders in $S$.

\begin{proposition}  
\label{theorem:no cylinder surface}
   A quasi-smooth, well-formed hypersurface $S\subset \mathbb{P}(1, 6n-5, 10n-8, 15n-12)$ of degree $d=30n-24$, where $n\geq 1$,  does not contain an anti-canonical polar cylinder. 
   \end{proposition}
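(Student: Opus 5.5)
The plan is to argue by contradiction using Corollary \ref{cor:cyl_implies_notlc}. The case $n=1$ is the smooth del Pezzo surface of degree $1$ in $\mathbb{P}(1,1,2,3)$, and there I will simply quote the known non-cylindricity result of \cite{CPW0,CPW}. For $n\geq 2$, suppose $D\sim_{\mathbb{Q}}-K_S\sim_{\mathbb{Q}} nC_x$ is effective and $S\setminus\mathrm{Supp}(D)$ is a cylinder. By Corollary \ref{cor:cyl_implies_notlc}, the pair $(S,D)$ is not log-canonical at some point $p\in\mathrm{Supp}(D)$. The numerical input is
$$C_x^2=\frac{30n-24}{(6n-5)(10n-8)(15n-12)}=\frac{1}{(6n-5)(5n-4)},$$
so $D\cdot C_x=n/((6n-5)(5n-4))$ is very small.

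\emph{Step 1: reduce to $C_x\not\subset\mathrm{Supp}(D)$.} Write $D=aC_x+\Omega$ with $C_x\not\subset\mathrm{Supp}(\Omega)$. I will use the convexity trick of \cite{CPW0}: for $\mu\geq 0$ the divisor $D_\mu=(1+\mu)D-\mu nC_x$ is still $\mathbb{Q}$-linearly equivalent to $-K_S$. Take the largest $\mu$ for which $D_\mu$ is effective. Then $C_x$ drops out of $D_\mu$ and $\mathrm{Supp}(D_\mu)\subset\mathrm{Supp}(D)$. The complement of $\mathrm{Supp}(D_\mu)$ contains the cylinder, and since it is affine it is still a cylinder. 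To keep the non-lc conclusion, I need $(S,nC_x)$ to behave well enough that a non-lc point of $D$ produces a non-lc point of $D_\mu$, via $D=\frac{1}{1+\mu}D_\mu+\frac{\mu}{1+\mu}nC_x$. This is delicate because $\mathrm{lct}(S,nC_x)<1$. So I would instead apply Corollary \ref{cor:cyl_implies_notlc} directly to $D_\mu$, whose complement is again a cylinder. Thus we may assume $C_x\not\subset\mathrm{Supp}(D)$.

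\emph{Step 2: $(S,D)$ is lc along $C_x$.} Here $D\cdot C_x$ is tiny, so at any point $q\in C_x$, smooth or a cyclic quotient point $\frac1r(1,a)$ with $r\in\{6n-5,\,10n-8,\,15n-12\}$ or a divisor of these, we have the local bound $\mathrm{mult}_q(D)\leq r\,(D\cdot C_x)$, suitably corrected for the cusp of $C_x$ at $[0:1:0:0]$. This is $<1$ in the orbifold sense, so the standard criterion (mult $\leq 1$ on the orbifold cover implies lc) gives log canonicity at $q$. The points to check individually are the singular point $[0:1:0:0]$, which lies on the cusp of $C_x$, and the two singular points $[0:0:1:\pm i]$-type. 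In each case I would check the inequality $r\,D\cdot C_x\leq 1$ on the local cover.

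\emph{Step 3: exclude non-lc points $p\in S\setminus C_x$.} Such a $p$ is a smooth point, so $\mathrm{mult}_p(D)>1$. I would use the isomorphism \eqref{eq:dP6-open} with the open part of the degree $1$ del Pezzo surface $S_1=Z(F_1)$. To bound multiplicities, I would take a curve $T$ through $p$ cut out by a low-degree quasi-homogeneous equation, for instance $y-\lambda x^{6n-5}$ of degree $6n-5$, or from the pencil $|\mathcal{O}_S(10n-8)|$ containing $z-\lambda x^{10n-8}$. This $T$ is not contained in $\mathrm{Supp}(D)$ after another convexity adjustment, and
$$1<\mathrm{mult}_p(D)\leq D\cdot T=\frac{n(6n-5)}{(6n-5)(5n-4)}=\frac{n}{5n-4}<1,$$
which is a contradiction. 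This is where the smallness of $C_x^2$ pays off.

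The main obstacle is Step 3 when $T$ is forced to be a component of $D$, or is reducible or singular at $p$. In that case I would apply the convexity trick with $T$ in place of $C_x$, which needs $(S,T)$ to be lc near $p$. I would try to choose $T$ in the two-dimensional linear system $|\mathcal{O}_S(10n-8)|$ through $p$, so that a member smooth at $p$ and distinct from the components of $D$ is available.
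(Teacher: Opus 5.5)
Your Step 1 misses a case, and that case needs an idea beyond log canonicity. Write $D=aC_x+\Omega$. Since $\Omega\sim_{\mathbb{Q}}(n-a)C_x$ and $C_x$ is ample, $a\le n$, with equality if and only if $\Omega=0$. Equality means $D=nC_x$, and the cylinder is $S\setminus C_x$ itself. In that case $D_\mu=nC_x$ for every $\mu\ge 0$, so there is no largest $\mu$ and $C_x$ never drops out. Steps 2--3 cannot help either, because $(S,nC_x)$ is not log canonical along $C_x$ for $n\ge 2$.

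The paper disposes of this case with \eqref{eq:dP6-open}. There $S\setminus C_x$ is the complement of an irreducible anticanonical curve in a smooth del Pezzo surface of degree $1$, so its Picard group has rank $8$ (equivalently, its Euler characteristic is $11-2=9$). A cylinder $Z\times\mathbb{A}^1$ over a smooth rational affine curve has trivial Picard group and Euler characteristic at most $1$. You must add this argument.

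Second, the claim that $S\setminus\mathrm{Supp}(D_\mu)$ is a cylinder \emph{because it is affine} is false. For example, $\mathbb{P}^2$ minus a smooth conic is affine and contains $(\mathbb{A}^1\setminus\{0\})\times\mathbb{A}^1$. But it has fundamental group $\mathbb{Z}/2$ and Euler characteristic $1$, so it is not a cylinder. Hence Corollary \ref{cor:cyl_implies_notlc} does not apply to $D_\mu$.

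You do not need that corollary, however. Proposition \ref{prop:pseff_cylinder} only requires that $S\setminus\mathrm{Supp}(D_\mu)$ \emph{contain} a cylinder and that $K_S+D_\mu\sim_{\mathbb{Q}}0$ be pseudo-effective. It then yields a non-log-canonical point of $(S,D_\mu)$. With these repairs, and the choice of curve in Step 3 described below, your argument closes.

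Your route is genuinely different from the paper's, and leaner. The paper keeps $D$ and splits according to whether the map $S\dashrightarrow\mathbb{P}^1$ induced by the cylinder is a morphism or has a base point. If it is a morphism, the section is $C_x$ with coefficient $2$, and $\frac{n}{n-2}(D-2C_x)$ would be ample yet trivial on the fibres. If it has a base point, \cite[Lemma A.3]{CPW0} transfers non-log-canonicity at the base point to $\frac{n}{n-d_1}(D-d_1C_x)$. Your immediate passage to $\frac{n}{n-a}\Omega$ treats both cases at once. The possibility that the section is the transform of $C_x$ is ruled out inside the proof of Proposition \ref{prop:pseff_cylinder}.

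Finally, Step 3 is easier than you fear, but your displayed estimate uses the wrong curve. The system $|\mathcal{O}_S(6n-5)|$ is only a pencil, so its unique member through $p$ may lie in $\mathrm{Supp}(D)$. Use instead the net $\{az+byx^{4n-3}+cx^{10n-8}=0\}$. Its members through $p$ form a pencil without fixed components, so some member $M$ through $p$ shares no component with $D$. Then $\mathrm{mult}_p(D)\le D\cdot M=\frac{2n}{6n-5}\le 1$. Neither smoothness of $M$ at $p$ nor any convexity argument is needed.

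(In Step 2, $S$ has only one point with $x=y=0$, of index $5n-4$; your bound covers it anyway.)
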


\begin{proof}
The case $n=1$, where $S$ is a smooth del Pezzo surface of degree $1$, is done by \cite[Proposition 5.1]{KPZ0}. We henceforth assume that $n\geq 2$ in what follows. 

Recall that for a $\mathbb{Q}$-Cartier divisor $D$ on a surface $X$ with cyclic quotient singularities and a singular point $\msp \in X$ of type $\frac{1}{r}(1,a)$, where $r$ and $a$ are coprime, the multiplicity $\mult_{\msp}(D)$ of $D$ at $\msp$ is defined as the multiplicity
$\mult_{\widetilde{\msp}}(\widetilde{D})$, where 
    $\phi\colon \widetilde{\mathcal{U}}  \longrightarrow \mathcal{U}$ 
 is a cyclic cover of degree $r$ of an open neighborhood $\mathcal{U}$ branched only over $\msp$ and with smooth total space  $\widetilde{\mathcal{U}}$, $\widetilde{\msp}$ is the unique point over $\msp$ and $\widetilde{D} = \phi^{-1}(D|_{\mathcal{U}})$. Recall further \cite[Lemma 8.12]{Kol97} that if $D$ is effective, the log pair $(\mathcal{U},D|_{\mathcal{U}})$ is log-canonical at $\msp$ if and only if the log pair $(\tilde{\mathcal{U}},\widetilde{D})$ is log canonical at $\tilde{\msp}$ and that by \cite[Proposition 9.5.13]{Laz04}, this holds if and only if $\mult_{\msp}(D)=\mult_{\widetilde{\msp}}(\widetilde{D})\leq 1$. 
 
Now assume that $D\sim_{\mathbb{Q}} -K_S\sim_{\mathbb{Q}} nC_x$ is an effective anti-canonical $\mathbb{Q}$-divisor. For any point $\msp\in S\setminus C_x$, there exists a member of the complete linear system: $$\big{|} (10n-8)H_x \big{|}= \big{\{} \,  \{ az + byx^{4n-3} + cx^{10n-8} = 0\} \, \big{|}  \quad [a:b:c]\in \mathbb{P}^2 \, \big{\}}$$ 
  on the ambient space $\mathbb{P}(1, 6n-5, 10n-8, 15n-12)$ whose intersection $M\sim_{\mathbb{Q}} (10n-8)C_x$ with $S$ passes through $\msp$ and is not contained in the support of $D$. Computing the intersection number gives the inequality: 
\begin{equation*}
    \frac{2n}{6n-5} = \frac{n(10n-8)(30n-24)}{(6n-5)(10n-8)(15n-12)} = D\cdot M \geq \mult_{\msp}(D).
\end{equation*}
Since $n\geq 2$ by assumption, it follows that $1\geq \frac{2n}{6n-5}$, which implies the log pair $(S,D)$ is  log-canonical at every point $\msp\in S\setminus C_x$. Similarly, if the support of $D$ does not contain $C_x$, then for every point $\msp\in C_x$,  we obtain the following estimate: 
\begin{equation*}
    \frac{n}{(6n-5)(5n-4)} = \frac{n(30n-24)}{(6n-5)(10n-8)(15n-12)}= D\cdot C_x \geq \frac{\mult_{\msp}(D)}{r}
\end{equation*}
if $\msp$ is a singular point of type $\frac{1}{r}(1,a)$. Since $n\geq 2$ and $r$ divides either $6n-5$ or $10n-8$ or $15n-12$, it follows that $1\geq \mult_{\msp}(D)$ for every point $\msp\in C_x$, which implies that the log pair $(S,D)$ is log canonical at every point of $S$. In conclusion, if $D$ is an effective anti-canonical $\mathbb{Q}$-divisor such that the log pair $(S,D)$ is not log canonical at a point $\msp$ of $S$,  then $\msp\in C_x$ and  $C_x$ is contained in the support of $D$. 

Now assume that $S$ contains a cylinder $U=S\setminus\mathrm{Supp}(D)\cong Z\times_{\Bbbk} \mathbb{A}^1_{\Bbbk}$ for some effective anti-canonical $\mathbb{Q}$-divisor $D=\sum d_i D_i$. By Corollary  \ref{cor:cyl_implies_notlc}, the log pair $(S,D)$ is not log-canonical at some point $\msp$ of $S$. The previous observation then implies that  $\msp\in C_x$ and that $C_x$ is contained in 
the support of $D$, say $D_1=C_x$ with $d_1\leq n$ as $D\sim_{\mathbb{Q}} nC_x$. Note that since $S\setminus C_x$ is isomorphic to the complement of an irreducible effective anti-canonical divisor in a smooth del Pezzo surface of degree $6$ by \eqref{eq:dP6-open}, its Picard rank is equal to $8$, and hence, it cannot be a cylinder over a smooth rational affine curve. Thus, $d_1<n$ and $D=d_1C_x+R$ where $R$ is a non-trivial effective $\mathbb{Q}$-divisor whose support does not contain $C_x$.

Let $\varphi:S\dashrightarrow  \mathbb{P}^1_{\Bbbk}$ induced by the projection $\mathrm{pr}_Z:U\to Z$. Assume that $\varphi$ is a morphism. Then by \eqref{eq:ram_2} in the proof of Proposition \ref{prop:pseff_cylinder}, precisely one of the irreducible components $D_i$ of $D$ is a section of $\varphi$ and $d_i=2$, implying that the log pair $(S,D)$ is not log-canonical at every point of $D_i$. So it must be that $i=1$ and $d_1=2$. So $n>2$ and then,  $D':=\frac{n}{n-2}D-\frac{2n}{n-2}C_x$
would be an effective anti-canonical $\mathbb{Q}$-divisor on $S$ intersecting trivially the general fibers of $\varphi:S\to \mathbb{P}^1$, which is absurd.

So $\varphi$ has a unique proper base point $\msp$, supported on $C_x$ and at which the log pair $(S,D)$ is not log-canonical. But then, since $d_1<n$, $D'=\frac{n}{n-d_1}D-\frac{d_1n}{n-d_1}C_x$ would be an effective anti-canonical $\mathbb{Q}$-divisor not containing $C_x$ in its support and for which, by \cite[Lemma A.3]{CPW0}, the log pair $(S,D')$ is not log-canonical at $\msp$, which is impossible.   
\end{proof}

\medskip

Theorem \ref{thm:KKW24} together with several other existing results concerning sporadic cases in the classification of quasi-smooth well-formed del Pezzo hypersurfaces in weighted projective spaces provide quite strong evidence towards the following conjecture complementing Theorem \ref{thm:daiaj-cyl}:

\begin{conjecture} \label{conj:Cyl-criterion}
    Let $S\subset\mathbb{P}(a_0,a_1,a_2,a_3)$ be a quasi-smooth, well-formed del Pezzo hypersurface of degree $d$ defined over an algebraically closed field $\Bbbk$ of characteristic zero. Then $S$ admits an anti-canonical polar cylinder if and only if 
     $d = a_i + a_j$ for some $i\neq j$.
\end{conjecture}

\begin{remark}[{\cite[Example 6.4]{Saw}}] The conjecture is easily seen to be wrong if one relaxes the quasi-smoothness assumption. For instance the surface: 
$$S=\{t^2-x^2y^2+xz^3=0\}\subset \mathbb{P}(1,1,1,2)_{[x:y:z:t]}$$ 
is a normal del Pezzo surface of degree $2$ and Picard rank one, with two canonical singularities - $[1:0:0:0]$ of type $A_2$ and $[0:1:0:0]$ of type $A_5$ - 
which is not quasi-smooth and contains the anti-canonical polar cylinder $S\setminus \{x(t-xy)=0\}\cong (\mathbb{A}^1\setminus\{0\})\times \mathbb{A}^1$.    
\end{remark}

In view of Theorem  \ref{thm:KKW24} which covers all but finitely classes of surfaces belonging to the infinite series, the complete verification of Conjecture \ref{conj:Cyl-criterion} is reduced to studying the existence of anti-canonical polar cylinder for a finite number of classes of surfaces. Let us mention the following complementary result towards the above conjecture. 

\begin{theorem}[{\cite[Section 3]{CD}}]  \label{rem:CD} Conjecture \ref{conj:Cyl-criterion} holds for  quasi-smooth well-formed del Pezzo hypersurfaces $S\subset \mathbb{P}(a_0,a_1,a_2,a_3)$ of so-called  Brieskorn-Pham type, that is, defined by quasi-homogeneous equations of the form $x_0^{n_0}+x_1^{n_1}+x_2^{n_2}+x_3^{n_3}=0$. 
\end{theorem}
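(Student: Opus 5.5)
We treat the two directions of Conjecture \ref{conj:Cyl-criterion} separately for $S=\{x_0^{n_0}+x_1^{n_1}+x_2^{n_2}+x_3^{n_3}=0\}$, where $a_in_i=d$ for every $i$.

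\emph{Sufficiency.} Suppose $d=a_i+a_j$ for some $i\neq j$. This direction is Theorem \ref{thm:daiaj-cyl} when $\dim S\geq 2$, that is $n=3$, because $\Bbbk$ is algebraically closed and hence quadratically closed. The Brieskorn–Pham shape is compatible with this. If $n_i\geq 2$ and $n_j\geq 2$, then $d=a_i+a_j$ together with $a_in_i=a_jn_j=d$ forces $n_i=n_j=2$ and $a_i=a_j$. After the degree-$0$ coordinate change $x_i^2+x_j^2=(x_i+\sqrt{-1}x_j)(x_i-\sqrt{-1}x_j)$, the equation takes the normal form \eqref{eq:normal_form}, and the cylinder comes from the projection to $\mathbb{P}(a_0,\ldots,\hat a_j,\ldots)$ exactly as in that proof. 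The case $n_i=1$ would make $S$ a linear cone, which is again cylindrical by Corollary \ref{thm:cylinder_wps}.

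\emph{Necessity.} Assume $d\neq a_i+a_j$ for all $i\neq j$, and let $U=S\setminus\mathrm{Supp}(D)$ be an anti-canonical polar cylinder with $D\sim_{\mathbb{Q}}-K_S\sim_{\mathbb{Q}} I\cdot\mathcal{O}(1)$, where $I=\sum a_i-d$. We aim for a contradiction, following the template of Proposition \ref{theorem:no cylinder surface}.

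1. Write $C_i=S\cap\{x_i=0\}$. Each $C_i$ is a Fermat-type curve, and $D\cdot C_i=Id/\prod_{k\neq i}a_k$.

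2. By Corollary \ref{cor:cyl_implies_notlc}, $(S,D)$ fails to be log canonical at some point $\msp\in\mathrm{Supp}(D)$.

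3. Use the multiplicity criterion at cyclic quotient points (\cite[Lemma 8.12]{Kol97}, \cite[9.5.13]{Laz04}) together with intersection estimates against moving members of $|\mathcal{O}(a_k)|$ through $\msp$. This should show that $\msp$ lies on a curve $C_i$ of small degree, or at a coordinate vertex, and that $C_i\subset\mathrm{Supp}(D)$.

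4. Replace $D$ by $D'=\frac{I}{I-c}D-\frac{cI}{I-c}C_i$, where $c$ is the coefficient of $C_i$ in $D$. By \cite[Lemma A.3]{CPW0} the pair $(S,D')$ is still not log canonical at $\msp$. This reduces to the case $C_i\not\subset\mathrm{Supp}(D')$, where the estimate of Step 3 gives a contradiction.

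5. Two boundary cases need separate treatment. The case $c=I$ has $U=S\setminus C_i$ itself a cylinder; we exclude it by a Picard rank or Euler characteristic count of $S\setminus C_i$, which is a cyclic quotient of the affine Fermat surface. The case where the induced map $\varphi\colon S\dashrightarrow\mathbb{P}^1$ is a morphism we exclude via \eqref{eq:ram_1}–\eqref{eq:ram_2}, as in Proposition \ref{theorem:no cylinder surface}.

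The main obstacle is Step 3. The inequalities $D\cdot M\geq\mult_{\msp}D/r$ depend heavily on the weights, and for small $I$ they may fail to be sharp enough. Where they fail I would use two further tools. First, the $\alpha$-invariant: many Brieskorn–Pham del Pezzo surfaces have $\alpha(S)\geq1$ by \cite{JK01,CPS10}, and Theorem \ref{theorem:alpha cylinder} then finishes those directly. Second, for the remaining cases I would use the large symmetry group $\prod\mu_{n_i}$ to reduce, by a case-by-case check, to the infinite series already handled by Theorem \ref{thm:KKW24} and to finitely many sporadic weight vectors, the latter treated by explicit weighted blow-ups at the vertices.
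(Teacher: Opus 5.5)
\textbf{Verdict: there is a genuine gap.} Your sufficiency direction is fine: it is exactly Theorem \ref{thm:daiaj-cyl}, and your remark that $d=a_i+a_j$ together with $a_in_i=a_jn_j=d$ forces $n_i=n_j=2$ is correct. The paper gives no proof of this statement; it only cites \cite[Section 3]{CD}. The gap is the necessity direction, which carries all the content and which your proposal does not prove.

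\textbf{Step 3 fails for general weights.} Steps 3--5 transplant the proof of Proposition \ref{theorem:no cylinder surface}. That proof relies on features peculiar to family \textnumero~4: a weight-one coordinate curve $C_x\sim_{\mathbb{Q}}\frac{1}{I}(-K_S)$ containing every singular point; an index large enough that $D\cdot M\leq 1$ for a moving curve $M$ through any point off $C_x$; and an explicit description of $S\setminus C_x$. For general Brieskorn--Pham weights, the conclusion you want from Step 3 is not merely unproven but false. On the Fermat cubic $x_0^3+x_1^3+x_2^3+x_3^3=0$ (where $d=3\neq 2=a_i+a_j$), take the tangent section at the Eckardt point $\msp=[1:-1:0:0]$. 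It is the triangle of lines $\{x_0+x_1=x_2^3+x_3^3=0\}$, an anticanonical divisor that is not log canonical at $\msp$ (threshold $2/3$). Yet its support contains neither $C_2$ nor $C_3$.

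\textbf{The hard cases are untouched.} The surfaces needing an argument include:
\begin{itemize}
\item the smooth Fermat-type del Pezzo surfaces of degrees $3,2,1$, namely the cubic above, $x_0^2+x_1^4+x_2^4+x_3^4=0$ in $\mathbb{P}(2,1,1,1)$ and $x_0^2+x_1^3+x_2^6+x_3^6=0$ in $\mathbb{P}(3,2,1,1)$;
\item the $3A_1$ surface $x_0^2+x_1^3+x_2^3+x_3^6=0$ in $\mathbb{P}(3,2,2,1)$.
\end{itemize}
All have $\alpha<1$, and their non-cylindricity rests on results (\cite{KPZ0,CPW0,CPW}) whose proofs go well beyond intersection estimates or blow-ups at coordinate vertices.

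Your fallback does not supply this:
\begin{itemize}
\item the $\alpha\geq 1$ results do not apply to these surfaces;
\item $\prod\mu_{n_i}$ gives no control over a polar cylinder, which need not be invariant;
\item Theorem \ref{thm:KKW24} only concerns the $35$ series with $n>2$;
\item the leftover ``finitely many sporadic weight vectors'' are precisely the kind of cases to which the paper says Conjecture \ref{conj:Cyl-criterion} is still reduced.
\end{itemize}

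\textbf{What is needed.} You need an explicit enumeration, followed by a real proof or reference for each case, or an argument of a different nature, which is what \cite{CD} supplies. The enumeration is short. Well-formedness forces $\mathrm{lcm}(n_i,n_j,n_k)=d$ for every triple. Together with $\sum_i 1/n_i>1$ and at most one $n_i=2$, I count only the exponent vectors $(3,3,3,3)$, $(2,4,4,4)$, $(2,3,6,6)$, $(2,3,3,6)$, $(3,3,4,4)$, $(3,3,5,5)$, $(2,3,4,12)$ and $(2,3,5,30)$.

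\textbf{Two smaller points.} First, the normalization and transfer in Step 4 need fixing.
\begin{itemize}
\item Since $C_i\sim_{\mathbb{Q}}\frac{a_i}{I}(-K_S)$, the replacement should be $D'=\frac{I}{I-ca_i}(D-cC_i)$, and the boundary case in Step 5 is $ca_i=I$.
\item Non-log-canonicity passes from $D$ to $D'$ by convexity only if $(S,\frac{I}{a_i}C_i)$ is log canonical at $\msp$. This fails whenever $I>a_i$.
\item When $c>1$, the pair $(S,D)$ is non-log-canonical along $C_i$ for trivial reasons, so nothing is inherited by $D'$.
\item In the base-point case you can bypass this by running \eqref{eq:ram_1}--\eqref{eq:ram_2} directly on $D'$, whose components all lie in fibres of $\varphi$. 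But Step 4 is not a general reduction.
\end{itemize}

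Second, necessity is false for linear cones. The surface $x_0+x_1^2+x_2^3+x_3^6=0$ in $\mathbb{P}(6,3,2,1)$ is quasi-smooth, well-formed and del Pezzo. It is isomorphic to $\mathbb{P}(1,2,3)$, hence anti-canonically polar cylindrical by Corollary \ref{thm:cylinder_wps}, although $6\neq a_i+a_j$ for all $i\neq j$. You must therefore assume $n_i\geq 2$ for all $i$ and use that hypothesis somewhere. Your linear-cone remark in the sufficiency part is vacuous, since $n_i=1$ would force $a_j=0$.
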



\subsubsection{Towards higher dimension}\label{4-2-4}
In this subsection, we review known results concerning the cylindricity of quasi-smooth Fano hypersurface $X$ in a four-dimensional weighted projective space with terminal (automatically $\mathbb{Q}$-factorial) singularities. In what follows $\Bbbk$ denotes  an algebraically closed field of characteristic zero. Let us first recall the  following useful lemma:

\begin{lemma}[{\cite[Proposition 1.5]{CPPZ}}]\label{prop:cppz}
Let $X$ be a normal  rationally chain connected projective variety of dimension $n \leqq 4$. Assume that $X$ contains cylinder $U= Z\times_{\Bbbk} \mathbb{A}^1_{\Bbbk}$ such that for every nonempty Zariski open subset $Z'$ of $Z$ the natural birational map $Z'\times_{\Bbbk}\mathbb{P}^1_{\Bbbk}\dashrightarrow X$ 
is not an open immersion.   Then $X$ is rational.  
\end{lemma}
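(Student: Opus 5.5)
Assume $X$ contains a cylinder $U\cong Z\times_{\Bbbk}\mathbb{A}^1_{\Bbbk}$, where $\dim Z=n-1\leq 3$ and $Z$ can be taken smooth after shrinking. The plan is to show that $Z$ is rational; $X$ is then birational to $Z\times\mathbb{P}^1$ and hence rational. As in subsection \ref{subsec:log-res-cyl}, resolve the rational map $\varphi:X\dashrightarrow\bar Z$ induced by $\mathrm{pr}_Z$. This gives a $\mathbb{P}^1$-fibration $\tilde\varphi:\tilde X\to\bar Z$ whose generic fibre is $\mathbb{P}^1_K$, with $K=\Bbbk(\bar Z)$, and a birational section $\tilde H_\infty$.

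\textbf{Step 1: $\bar Z$ is rationally connected.} Since $X$ is rationally chain connected and $\tilde X\to X$ is birational, the image $\bar Z$ of the dominant map $\tilde X\to\bar Z$ is rationally chain connected. Being smooth projective, $\bar Z$ is then rationally connected. It has dimension $\leq 3$.

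\textbf{Step 2: the hypothesis forces a non-trivial degeneration.} If every fibre of $\varphi$ over a dense open $Z'$ met $X$ in a complete $\mathbb{P}^1$ disjoint from the rest, the birational map $Z'\times\mathbb{P}^1\dashrightarrow X$ would be an open immersion. The hypothesis therefore says that the closures of the general $\mathbb{A}^1$-fibres in $X$ all pass through a common locus: the base locus $B$ of $\varphi$, where $\sigma(\tilde H_\infty)$ is collapsed. Equivalently, $\tilde H_\infty$ is $\sigma$-exceptional, or its image $W=\sigma(\tilde H_\infty)$ has $\dim W<\dim Z$. The general fibre closures then form a family of rational curves through $W$, and the map $\beta:\bar Z\dashrightarrow W$ from the proof of Corollary \ref{cor:cyl_implies_notlc} has positive-dimensional general fibres.

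\textbf{Step 3: conclude rationality by dimension count.} Write $e=\dim W<n-1$ and let $\beta$ have general fibre $F$ of dimension $n-1-e\geq 1$. Through a general point $w\in W$ passes an $(n-1-e)$-dimensional family of rational curves (the fibre closures over $F$), sweeping out a subvariety $X_w$ of dimension $n-e$ that is a cone-like family with vertex $w$. I would show that $X_w$ is birational to $F\times\mathbb{P}^1$ and in fact rational, so that $X$ is birational to a family of rational varieties over $W$. The cases are then as follows. If $e=0$, all curves pass through one point; projecting from it identifies $X$ birationally with $\bar Z\times\mathbb{A}^1$ with a section contracted, and rationality of $\bar Z$ must be extracted from the fact that it parametrises curves through a fixed point, so that $\bar Z$ is birational to the exceptional divisor of a weighted blow-up, i.e.\ to a projectivised tangent-cone object. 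If $e\geq 1$, the generic fibre of $\beta$ is a rationally connected variety over $\Bbbk(W)$ of dimension at most $2$ carrying a rational point (coming from the section). It is therefore rational over $\Bbbk(W)$ by Iskovskikh/Manin for surfaces with a point and by Lüroth for curves. Meanwhile $W$ is rationally connected of dimension $\leq 2$, hence rational by Castelnuovo.

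\textbf{Main obstacle.} The main obstacle is the case $e=0$ with $\dim\bar Z=3$, where rational connectedness of a threefold does not give rationality. Here I would first try to use that $\bar Z$ parametrises the tangent directions at the point $\msp=W$ of curves in the cylinder. This should realise $\bar Z$ birationally as a subvariety of the exceptional divisor of a weighted blow-up at $\msp$, which is a weighted projective space, and since the dimensions match, $\bar Z$ is birational to that weighted projective space and hence rational. The bound $n\leq 4$ is used precisely so that the lower-dimensional pieces are always covered by classical rationality criteria.
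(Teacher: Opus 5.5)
\textbf{There is a genuine gap in Step 3.} Your reduction agrees with the paper's. The hypothesis means that the section at infinity $\tilde H_\infty$ (birationally $\bar Z\times\{\infty\}$) is contracted by $\sigma$, and it then suffices to show that $\bar Z$ is rational. What Step 3 is missing is the fact that makes this hypothesis usable: a divisor contracted by a birational morphism onto a variety that is smooth at the generic point of its centre is birationally ruled over that centre (Abhyankar's ruled residue theorem).

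The paper applies this directly. It gets $\bar Z\sim_{\mathrm{bir}}\tilde H_\infty\sim_{\mathrm{bir}} S\times\mathbb{P}^1$ with $\dim S=n-2\leq 2$. Then $S$ is rationally connected as an image of $\bar Z$, hence rational, and $X\sim_{\mathrm{bir}} S\times\mathbb{P}^1\times\mathbb{P}^1$. No case distinction on $e=\dim W$ is needed.

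Your substitutes for this step do not work.
\begin{itemize}
\item For $e\geq 1$, there is no $\Bbbk(W)$-point ``coming from the section''. $\tilde H_\infty$ is a section of $\tilde\varphi$, not of $\beta$; birationally it \emph{is} $\bar Z$, the source of $\beta$.
\item The generic fibre of a map from a rationally connected variety need not be rationally connected; think of a pencil of quartic surfaces in $\mathbb{P}^3$.
\item Even a geometrically rational surface with a point over $\Bbbk(W)$ need not be $\Bbbk(W)$-rational. Iskovskikh's criterion requires $K^2\geq 5$ for minimal models, and minimal cubic surfaces are never rational over their ground field.
\item For $e=0$, the closures of the $\mathbb{A}^1$-fibres through $\msp$ need not be separated by their tangent directions, and in general not by a single weighted blow-up either. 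The divisor that separates them is $\tilde H_\infty$ itself, which is typically reached only after a sequence of blow-ups.
\item So $\bar Z\dashrightarrow E_w$ need not be birational onto its image. Without that, equality of dimensions gives nothing, since every variety maps generically finitely onto a projective space.
\item If $\msp\in\mathrm{Sing}(X)$, the exceptional divisor of a weighted blow-up is not a weighted projective space at all.
\end{itemize}

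\textbf{Step 1 is also not valid as written, and the statement needs an extra hypothesis.} Rational chain connectedness is not a birational invariant of singular varieties. It therefore passes neither from $X$ to $\tilde X$ nor to $\bar Z$ across the indeterminacy of $\varphi$. In fact the statement fails as literally stated for arbitrary normal $X$. Take the cone $X=\{F_4(x,y,z)=0\}\subset\mathbb{P}^3$ over a smooth plane quartic $C$. It is a normal, rationally chain connected surface containing the cylinder $X\cap\{x\neq 0\}\cong (C\cap\{x\neq 0\})\times\mathbb{A}^1$, whose section at infinity is contracted to the vertex. Yet $X$ is birational to $C\times\mathbb{P}^1$ with $g(C)=3$. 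Here $\bar Z=C$ is not rationally connected, and the contracted divisor is not ruled.

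So one needs something like $X$ smooth. Alternatively, it suffices that $X$ is klt (so rational chain connectedness lifts to $\tilde X$) and regular at the generic point of $W$ (so Abhyankar applies). The paper's short proof also uses such a hypothesis tacitly. Under it, your Steps 1--2 together with Abhyankar's theorem give the result.
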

\begin{proof}
Up to shrinking it if necessary, we can assume without loss of generality that $Z$ is smooth. Let $\bar{Z}$ be a smooth projective completion of $Z$ and  $\infty=\mathbb{P}^1_{\Bbbk}\setminus \mathbb{A}^1_{\Bbbk}$. The assumption implies that birational map $\varphi:\bar{Z}\times_{\Bbbk}\times \mathbb{P}^1_{\Bbbk} \dashrightarrow X$ induced by the inclusion $Z\times_{\Bbbk} \mathbb{A}^1_{\Bbbk}\subset X$ contracts the divisor $D_\infty:=\bar{Z}\times \{\infty\}$. It follows that $D_\infty$ is birationally $\mathbb{P}^1$-ruled, whence that $X$ is birational to $S\times_{\Bbbk}\mathbb{P}^1_{\Bbbk}\times_{\Bbbk}\mathbb{P}^1_{\Bbbk}$ for some smooth projective variety $S$ of dimension $n-2$. But since $X$ is rationally chain connected, $S$ is rationally chain connected as well, whence rational, which implies the rationality of $X$.   
\end{proof}


\begin{corollary}\label{cor:cppz}
Let $X$ be a ${\mathbb Q}$-factorial Fano variety with log-canonical singularities of Picard rank $\varrho (X)=1$ and of dimension $\dim X \leqq 4$. If $X$ is cylindrical, then $X$ is rational.
\end{corollary}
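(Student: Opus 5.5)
The plan is to reduce Corollary \ref{cor:cppz} to Lemma \ref{prop:cppz}. That lemma needs two inputs: $X$ is rationally chain connected, and the cylinder $U=Z\times_{\Bbbk}\mathbb{A}^1_{\Bbbk}$ does not extend to an open immersion $Z'\times_{\Bbbk}\mathbb{P}^1_{\Bbbk}\hookrightarrow X$ for any dense open $Z'\subset Z$.

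\textbf{Rational chain connectedness.} A Fano variety with log-canonical singularities is rationally chain connected; this is Hacon--McKernan, and Zhang in the klt case. Since we may assume the base field is algebraically closed, I plan to quote this directly.

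\textbf{The non-extension condition.} Suppose, for contradiction, that for some dense open $Z'\subset Z$ the birational map $Z'\times\mathbb{P}^1\dashrightarrow X$ is an open immersion. Then every fiber $\{z\}\times\mathbb{P}^1$ with $z\in Z'$ is a complete curve in $X$. Its open part $\{z\}\times\mathbb{A}^1$ lies in $U$, and the point at infinity lies in $X\setminus U$.

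Now use $\varrho(X)=1$ together with $\mathbb{Q}$-factoriality. Every component $B$ of $X\setminus U$ of codimension one is $\mathbb{Q}$-Cartier, hence ample or trivial. Being effective and nonzero, $B$ must be ample.

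The key point is that $X\setminus U$ has pure codimension one. Let $\Gamma\subset X\setminus U$ be the closure of the image of the section $Z'\times\{\infty\}$. This set has dimension $\dim Z=n-1$, so $\Gamma$ is a prime divisor. Take any general fiber $\ell=\{z\}\times\mathbb{P}^1$. Because $Z'\times\mathbb{P}^1\subset X$ is open, the divisor $\Gamma$ restricts there to exactly $Z'\times\{\infty\}$, so $\ell\cdot\Gamma=1>0$.

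Next take any other codimension-one component $B$ of $X\setminus U$, which exists or not depending on the cylinder. On the open set $Z'\times\mathbb{P}^1$, such a $B$ meets the set only inside $Z'\times\{\infty\}$. Since $B\neq\Gamma$ and both are prime, $B\cap(Z'\times\mathbb{P}^1)$ has codimension at least two in $Z'\times\mathbb{P}^1$. So it misses a general fiber $\ell$ entirely, giving $\ell\cdot B=0$. This contradicts ampleness of $B$, so no such $B$ exists.

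Hence $\Gamma$ is the only divisorial component of $X\setminus U$. Here I need the standard fact that the complement of an affine open subset in a normal ($\mathbb{Q}$-factorial suffices) variety is of pure codimension one. Therefore $X\setminus U=\Gamma$.

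\textbf{The main obstacle.} The difficulty is turning this configuration into a genuine contradiction. On $X$, the cylinder compactifies to a $\mathbb{P}^1$-bundle over $Z'$ with the section $\Gamma$ at infinity, and $\Gamma$ is ample. The fibers $\ell$ cover a dense open subset of $X$ and satisfy $\ell\cdot\Gamma=1$.

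I would conclude as follows. Since $\Gamma$ is ample, a multiple $m\Gamma$ is very ample. Its complement $X\setminus\Gamma=U$ is affine, and $U$ contains no complete curves. But the closures of the fibers are not contained in $U$, so this observation alone is consistent; no contradiction yet.

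Instead, use the normal direction. On the open piece $Z'\times\mathbb{P}^1$ the normal bundle of the section at infinity is trivial along $Z'$. Then $\Gamma|_\Gamma$, restricted to the dense open subset $Z'\times\{\infty\}$ of $\Gamma$, is trivial. This forces $\Gamma^{n}=\Gamma|_\Gamma^{\,n-1}$ to be computed by a divisor supported on $\Gamma\setminus Z'$. Ampleness requires $\Gamma^n>0$.

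To close the argument, pick a curve $C\subset Z'\times\{\infty\}$ that is complete in $X$. Such a curve exists when $n\ge 3$ if $Z'$ contains complete curves, but that need not hold. Since this last step is delicate, I would first try to shrink and argue via $\Gamma\cdot\ell'$ for a fiber $\ell'$ degenerating into $\Gamma$. If that fails, I would fall back on invoking the sharper form of \cite[Proposition 1.5]{CPPZ} directly, which treats this case.

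Once both hypotheses of Lemma \ref{prop:cppz} are verified, the lemma gives rationality of $X$.
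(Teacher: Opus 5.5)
\textbf{There is a genuine gap.} Your overall architecture is the paper's: rational chain connectedness from Hacon--McKernan, then show that no cylinder in $X$ is the restriction of a $\mathbb{P}^1$-cylinder, then apply the rationality lemma. However, the second step is the only place where $\varrho(X)=1$ and $\mathbb{Q}$-factoriality are used, and you never prove it. Your proposal ends with an admitted failure and a fallback citation to a ``sharper form'' of the lemma, which is not an argument.

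The problem is that you only look at divisors contained in the boundary $X\setminus U$. The data you extract from them cannot produce a contradiction. These data are: $X\setminus U=\Gamma$ is a single ample prime divisor, and $\Gamma\cdot\ell=1$ for a covering family of curves $\ell$. They are realized by $\mathbb{P}^n\supset\mathbb{A}^n=\mathbb{A}^{n-1}\times\mathbb{A}^1$ with $\Gamma$ the hyperplane at infinity. Neither the normal-bundle observation nor the search for complete curves in $Z'$ repairs this.

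\textbf{Missing idea: use a vertical divisor, i.e.\ one that meets $U$.} Assume $n=\dim X\geq 2$; for $n=1$ one has $X\cong\mathbb{P}^1$ and there is nothing to prove. Suppose $Z'\times\mathbb{P}^1\hookrightarrow X$ is an open immersion.
\begin{enumerate}
\item Choose a prime divisor $W$ closed in $Z'$, and let $D$ be the closure of $W\times\mathbb{P}^1$ in $X$. Then $D\cap(Z'\times\mathbb{P}^1)=W\times\mathbb{P}^1$.
\item For $z\in Z'\setminus W$, the curve $\ell_z=\{z\}\times\mathbb{P}^1$ is complete and lies in the open set $Z'\times\mathbb{P}^1$. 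Hence $\ell_z\cap D=\emptyset$ and $D\cdot\ell_z=0$.
\item On the other hand, $D$ is $\mathbb{Q}$-Cartier by $\mathbb{Q}$-factoriality, and it is effective and nonzero. Since $\varrho(X)=1$, it is therefore ample. This is exactly the argument you already used for the boundary components $B$. Ampleness contradicts $D\cdot\ell_z=0$.
\end{enumerate}
Equivalently, $D\cdot\ell=0$ while $\Gamma\cdot\ell=1$, so $[D]$ and $[\Gamma]$ are independent in $N^1(X)$. This is the content of the paper's one-line justification.

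The decisive information is that the curves $\ell_z$ are pairwise disjoint and fill out a genuine $\mathbb{P}^1$-bundle over $Z'$. The boundary cannot see this. In the $\mathbb{P}^n$ example, the closures of the $\mathbb{A}^1$-fibers all pass through one point at infinity, which is why every vertical hyperplane meets them. Insert this argument in place of your last three paragraphs; the pure-codimension-one detour then becomes unnecessary, and your proof coincides with the paper's.
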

\begin{proof}
The variety $X$ is rationally chain connected by \cite{HM} and since it has Picard rank $1$, no cylinder in $X$ can be the restriction of a $\mathbb{P}^1$-cylinder, i.e., an open subset of the form $Z \times {\mathbb P}_{\Bbbk}^1$, contained in $X$. So, the rationality of $X$ follows from Lemma \ref{prop:cppz}.   
\end{proof}

\begin{remark} Note that even for smooth Fano threefolds of Picard rank $1$, rationality does not necessarily imply the existence of a cylinder. For instance though we know that {\it all} smooth prime Fano threefolds of genus $12$ are cylindrical (cf. \cite{KPZ1}), only smooth prime Fano threefolds $X_{2g-2}$ of genus $g=9, 10$ belonging to certain codimension one subset in the corresponding moduli are so far known to be cylindrical (cf. \cite{KPZ2}). 
\end{remark}


Quasi-smooth well-formed weighted hypersurface Fano threefolds are automatically $\mathbb{Q}$-factorial and have Picard rank $1$. Those with terminal singularities are completely classified and fall into 130 distinct families, see \cite{ABR} and \cite{BK}. 
 This classification provides a comprehensive framework for understanding their geometry, singularities, and birational properties. In particular, these families serve as natural testing grounds for problems concerning $K$-stability and the existence of cylinders. 
 
 \medskip 
 
 Among these Fano threefolds, for those of Fano index one which belong to 95 classified families, we have the following result: 

\begin{theorem}
 A quasi-smooth well-formed weighted hypersurface Fano threefold with terminal singularities and of Fano index one does not contain any cylinder. 
\end{theorem}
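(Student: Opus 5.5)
The natural route goes through rationality. I would combine Corollary \ref{cor:cppz} with the known birational rigidity of the 95 families. Let $X\subset \mathbb{P}(a_0,\ldots,a_4)$ be a quasi-smooth, well-formed hypersurface Fano threefold of index one with terminal singularities. As recalled above, $X$ is $\mathbb{Q}$-factorial with Picard rank $\varrho(X)=1$, and its terminal cyclic quotient singularities are in particular log-canonical. If $X$ contained a cylinder, Corollary \ref{cor:cppz} (with $\dim X=3\leq 4$) would force $X$ to be rational. So the whole statement reduces to showing that no quasi-smooth member of any of the 95 families is rational.

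For that step I would invoke the theorem of Cheltsov and Park, building on Corti--Pukhlikov--Reid: every quasi-smooth member of each of the 95 families of Reid--Iano-Fletcher is birationally rigid. Concretely, $X$ is the unique Mori fibre space in its birational class, up to square birational equivalence. A rational threefold, however, is birational to $\mathbb{P}^3$, and also to $\mathbb{P}^1\times\mathbb{P}^2$ with its projection to $\mathbb{P}^2$, which is a Mori fibre space over a positive-dimensional base. Birational rigidity excludes the latter, so $X$ is not rational. This gives the statement for every cylinder, not only anti-canonically polar ones, which matches the formulation.

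The main obstacle is purely bibliographic and uniformity-related. One has to check that the rigidity statement is available for all quasi-smooth members. The general-member statement of Corti--Pukhlikov--Reid was extended to every quasi-smooth member by Cheltsov--Park, so I would cite that version. One should also note that the smooth quartic $X_4\subset\mathbb{P}^4$ (family \textnumero~1) is covered by Iskovskikh--Manin, consistent with the example after Theorem \ref{theorem:alpha cylinder}. A small further point is that Corollary \ref{cor:cppz} requires $\Bbbk$ algebraically closed of characteristic zero, which is the standing assumption of this subsection.

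An alternative, if one wanted to avoid rationality, would be to argue directly with Corollary \ref{cor:cyl_implies_notlc} and Noether--Fano type inequalities. That would only handle polar cylinders and would require redoing the rigidity analysis, so I would not pursue it.
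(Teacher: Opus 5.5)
Your proposal is correct and follows the paper's own proof. The paper also combines Corollary \ref{cor:cppz} (a cylindrical $\mathbb{Q}$-factorial log-canonical Fano variety of Picard rank one and dimension at most four is rational) with the birational rigidity of every quasi-smooth member of the $95$ families \cite{CP16}, which excludes rationality.
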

 \begin{proof} This follows from Corollary \ref{cor:cppz}
and the deeper fact that every every such Fano hypersurface is birationaly rigid \cite{CP16}, whence in particular not rational. 
 \end{proof}
 

Nevertheless, there exist rational quasi-smooth well-formed weighted Fano threefold hypersurfaces of Fano index one with non-terminal singularities. 
\begin{question}\label{prob:klt3fold} Does there exist cylindrical quasi-smooth well-formed weighted Fano threefold hypersurfaces of Fano index one ? 
\end{question}

\medskip

At the other side of the spectrum, among the remaining $130-95=35$ deformation families, there  exists $20$ distinguished families whose members are known to be all rational and have been verified to be also all cylindrical, namely: 

\begin{theorem}[{\cite[Theorem 1]{KKW252}}]\label{thm:kkw} With the notation of \cite{ACP}, the following hold:
\begin{enumerate}
    \item Every member belonging to the $8$ families
\begin{equation*}\label{subfamily}
\text{\textnumero~104, 105, 111, 113, 118, 119, 123, 126}
\end{equation*} 
contains the affine space $\mathbb{A}^3_{\Bbbk}$.
  \item Every member belonging to the $12$ families 
\begin{equation*}\label{list of Fano 3-folds}
    \text{\textnumero~ 106, 112, 114, 115, 120-122, 121, 124, 125, and 127–130}
\end{equation*}
\end{enumerate}
contains an open subset isomorphic to $({\mathbb A}_{\Bbbk}^1 \setminus \{ 0 \}) \times {\mathbb A}_{\Bbbk}^2$ but no open subset isomorphic to $\mathbb{A}^3_{\Bbbk}$.
\end{theorem}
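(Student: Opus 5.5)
Most of these 20 families should fall under Theorem \ref{thm:daiaj-cyl}, so the plan is to check that first and then handle each family by explicit equations. For each family in the list of \cite{ACP} I will record the weights $(a_0,\dots,a_4)$ and the degree $d$, and test whether $d=a_i+a_j$ for some $i\neq j$. When this holds, the normal form \eqref{eq:normal_form} $F=x_3x_4+G(x_0,x_1,x_2)$ is available. The projection from the point $P_{x_4}$ then identifies $X\cap\mathbb{D}_+(x_3)$ with $\mathbb{D}_+(y_3)\subset\mathbb{P}(a_0,a_1,a_2,a_3)$.

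\textbf{Existence of the open subsets.} By Proposition \ref{prop:adrien}, $\mathbb{D}_+(y_3)$ is isomorphic to $\mathbb{A}^3$ when $a_3=1$. More generally, if some weight $a_m$ with $m\in\{0,1,2\}$ equals $1$, I will use $\mathbb{D}_+(x_3x_m)$, which gives $(\mathbb{A}^1\setminus\{0\})\times\mathbb{A}^2$. To get $\mathbb{A}^3$ in the families of (1), I will try to arrange the roles of $x_3$ and $x_4$ so that the variable being inverted has weight $1$. Failing that, I will look for a monomial $x_0x_j$ with $a_0=1$ and $d=1+a_j$, then complete the square or make the coordinate change of the proof of Theorem \ref{thm:daiaj-cyl} to solve for $x_j$. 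Then $X\cap\mathbb{D}_+(x_0)\supset\mathbb{A}^3$.

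In families where $d\neq a_i+a_j$, I expect a monomial $x_i^2x_j$ or $x_ix_jx_k$ to allow a variant of the same trick: the equation is linear in one variable over a principal open subset, so that open subset is isomorphic to an open subset of a weighted projective $3$-space. That open subset is a torus times an affine space, by Proposition \ref{prop:adrien} applied to the corresponding $\gcd=1$ index set.

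\textbf{Non-existence of $\mathbb{A}^3$ for the families in (2).} This is the main obstacle. My approach is to use the fact that $X\setminus\mathbb{A}^3$ would be an irreducible divisor in a variety of Picard rank one: the complement of $\mathbb{A}^3$ has pure codimension one and $\mathrm{Cl}(\mathbb{A}^3)=0$, $\Bbbk[\mathbb{A}^3]^*=\Bbbk^*$, so it is a single prime divisor $B$ with $\mathrm{Cl}(X)=\mathbb{Z}[B]$. Moreover, since $\mathbb{A}^3$ is smooth, all singular points of $X$ lie on $B$.

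I then intend to compare the local class groups at the cyclic quotient singular points. Because $B$ generates $\mathrm{Cl}(X)$, its image must generate each local class group $\mathbb{Z}/r$. Also $B\sim kH$ where $H$ generates $\mathrm{Cl}(X)\cong\mathbb{Z}$ (\cite{Di86} gives the Picard-type rank computation), so $k=\pm1$ and $B$ is a member of $|\mathcal{O}_X(1)|$. This is likely only possible when some weight equals $1$ and the section $\{x_0=0\}$ contains all the singular points.

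For the families in (2), I will show case by case that either no weight-one coordinate exists, or every member of $|\mathcal{O}_X(1)|$ misses some singular point, or its complement has nontrivial units or class group. The last condition will be read off from the equation restricted to $\mathbb{D}_+(x_0)$, for example a factorization producing invertible functions. This case analysis is where I expect the real work.
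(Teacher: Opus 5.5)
\paragraph{The non-existence half has a gap.} Your existence half follows the paper's route: the normal form of Theorem~\ref{thm:daiaj-cyl}, the projection identifying $X\cap\mathbb{D}_+(x_3)$ with $\mathbb{D}_+(y_3)$, then Proposition~\ref{prop:adrien}. Two small corrections there:
\begin{itemize}
\item As far as I can check, all twenty families have $d=a_i+a_j$, and for those in (1) one of the two weights is $1$. If so, your speculative fallback for $d\neq a_i+a_j$ is unnecessary, but you should verify this against the list.
\item For weightings with no weight $1$, such as $\mathbb{P}(3,4,5,6,7)$, you must apply Proposition~\ref{prop:adrien} to a pair $\{3,m\}$ with $\gcd(a_3,a_m)=1$, not $a_m=1$.
\end{itemize}
Your first non-existence step is exactly the first step of Example~\ref{lem:106}: the complement of an $\mathbb{A}^3$ is a single prime divisor $B\in|\mathcal{O}_X(1)|$. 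This already settles the families with no weight equal to $1$, since then $|\mathcal{O}_X(1)|=\emptyset$.

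The gap is what comes next. Every obstruction you propose is automatically satisfied by every $B\in|\mathcal{O}_X(1)|$, so your case analysis cannot conclude for any family in (2) that has a weight-one coordinate, e.g.\ No.~106.
\begin{itemize}
\item $\mathrm{Sing}(X)\subset\mathrm{Sing}(\mathbb{P})$, and all weight-one coordinates vanish there, so $B$ contains every singular point.
\item $\mathcal{O}_X(1)$ generates each local class group $\mathbb{Z}/r$.
\item $B$ is automatically prime, because $\mathrm{Cl}(X)=\mathbb{Z}\cdot\mathcal{O}_X(1)$.
\item The same exact sequence you invoked, $0\to\Bbbk[U]^*/\Bbbk^*\to\mathbb{Z}B\to\mathrm{Cl}(X)\to\mathrm{Cl}(U)\to 0$, shows that $U=X\setminus B$ has only constant units and $\mathrm{Cl}(U)=0$ as soon as $B$ generates $\mathrm{Cl}(X)$.
\end{itemize}
Concretely, for No.~106 and $B=H_x$ one gets $U=\{tw+f_4(1,y,z)=0\}\subset\mathbb{A}^4$. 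This is a smooth factorial affine threefold with trivial units, so none of your invariants distinguishes it from $\mathbb{A}^3$.

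What is missing is a finer, topological invariant. The paper uses the Euler characteristic:
\begin{itemize}
\item $U\cap\{t\neq 0\}\cong\mathbb{C}^*\times\mathbb{C}^2$ contributes $0$.
\item $U\cap\{t=0\}\cong C_0\times\mathbb{C}$, where $C_0$ is the smooth plane quartic $C$ minus its nonempty intersection with a line.
\item Hence $\chi(U)=\chi(C_0)\le 1-2g(C)=-5\neq 1=\chi(\mathbb{C}^3)$.
\end{itemize}
A linear change of $x,y,z$ reduces an arbitrary member of $|\mathcal{O}_X(1)|$ to $H_x$. You need an argument of this kind for every family in (2) with a weight equal to $1$: compute $\chi$, or another topological invariant, of $X\cap\mathbb{D}_+(\ell)$ for each weight-one linear form $\ell$.
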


\begin{example}\label{lem:106}
Assume for simplicity that $\Bbbk=\mathbb{C}$. A member $X$ of the family {\textnumero~106} 
equals:
\[
\{tw + f_4(x,y,z) =0\} \subseteq {\mathbb P}\big{(} 1, 1, 1, 2, 2 \big{)}_{[x:y:z:t:w]},
\]
for some homogeneous polynomial $f_4(x,y,z)$ of degree $4$ defining a smooth quartic curve $C$ in ${{\mathbb P}_{{\mathbb C}}^2}_{[x:y:z]}$.  The affine open subset $X \cap {\mathbb D}_+(w)$ is isomorphic to $\mathbb{A}_{{\mathbb C}}^3\big{/} {\mathbb Z}_2(1,1,1)$ which, by Proposition \ref{prop:adrien}, contains an open subset isomorphic to $({\mathbb A}_{{\mathbb C}}^1 \setminus \{ 0 \}) \times {\mathbb A}_{{\mathbb C}}^2$. On the other hand, $X$ does not contain any open subset isomorphic to $\mathbb{A}_{{\mathbb C}}^3$. Indeed, if such an open subset $U$ existed, then since $\mathrm{Pic}(U)$ is trivial, $X\setminus U$ would be the support of hyperplane section of $X$, say $H_x=\{x=0\}\cap X$ up to a linear coordinate change, so that $U$ is then isomorphic to the hypersurface in ${\mathbb A}_{{\mathbb C}}^4={\rm Spec}(\mathbb{C}[y,z,t,w])$ defined by the equation $ tw + f(1,y,z) =0$.  Noting that $U\setminus \{t=0\}\cong (\mathbb{A}_{{\mathbb C}}^1\setminus \{0\})\times \mathbb{A}_{{\mathbb C}}^1$ whereas $U\cap \{t=0\}\cong C_0\times \mathbb{A}_{\mathbb C}^1$ for some affine open subset $C_0$ of $C$, the topological Euler characteristic $\chi(U^{\mathrm{an}})$ of the underlying analytic space of $U$ then equals: $$\chi(U^{\mathrm{an}})=\chi(\mathbb{C}^*)+\chi(C_0^{\mathrm{an}})=\chi(C_0^{\mathrm{an}})< 1-2g(C) <-5,$$ where $g(C)=3$ is the genus of $C$, which is impossible since $(\mathbb{A}_{\mathbb C}^3)^{\mathrm{an}}=\mathbb{C}^3$ is contractible.   
\end{example}

The current state of knowledge for the remaining 15 families:  
\begin{equation*}\label{list of Fano 3-folds-15}
  \text{\textnumero~ 96–103, 107–110, 116, 117  and 122},
\end{equation*}
is little more contrasted. By  \cite{ok19}, {\it very general} members belonging to each of the $15$ families above  are not stably rational, hence are not cylindrical by Corollary \ref{cor:cppz}. In more detail, the following is known: {\it all} members in the families {\textnumero~96, 97, 98}, which are respectively cubic threefolds, quartic double solids and double cones of the Veronese surfaces, are classically known to be non-rational, hence non-cylindrical. On the other hand, {\it all } members of the families  {\textnumero~100, 101, 102, 103, 110} are known to be birationally solid by \cite{ok}, and hence neither rational nor cylindrical. In addition, {\it all} members belonging to the families  {\textnumero~107, 116} are known to be non-rational by \cite{pr1}, \cite{pr2}, whence non-cylindrical.  

The non-rationality of {\it all} members in the families {\textnumero~99, 108, 109, 117, 122} is still an open problem. Since non-cylindricity is less restrictive than non-rationality, the  following
question remains of independant interest: 
\begin{question}\label{question:non-cylindricity}
Is every member in the families in {\textnumero~99, 108, 109, 117, 122}  non-cylindrical ?
\end{question}
\section{Weighted Fano complete intersections}\label{section5}
In this section, all the varieties considered are assumed to be defined over an algebraically closed field $\Bbbk$ of characteristic zero. 

\subsection{Surfaces}\label{5-1}
In contrast to the weighted hypersurface case, the existence of weighted complete intersections admitting an anti-canonical polar cylinder is much more exceptional. In the setting of quasi-smooth, well-formed weighted complete intersection log del Pezzo surfaces of codimension at least two and index one, a complete classification is nowadays available (see \cite{KP15}). These surfaces are listed explicitly in Tables \ref{table:sporadic} and \ref{table:infinite}.

\begin{table}[htbp]
    \centering
    \caption{Sporadic cases}
    \label{table:sporadic}
    \rowcolors{2}{gray!15}{white}
    \begin{tabular}{@{}c|l|c || c|l|c@{} }
        \toprule
        \textnumero & $(a_0,a_1,a_2,a_3, a_4)$ & multidegree & \textnumero & $(a_0,a_1,a_2,a_3,a_4)$ & multidegree \\
        \midrule
        1  & $(1,2,2,3,3)$   & $(4,6)$   & 2  & $(1,2,3,4,5)$   & $(6,8)$ \\
        3  & $(1,3,3,5,5)$   & $(6,10)$  & 4  & $(1,4,5,7,11)$  & $(12,15)$ \\
        5  & $(1,4,7,10,13)$ & $(14,20)$ & 6  & $(1,5,8,12,19)$ & $(20,24)$ \\
        7  & $(1,5,9,13,17)$ & $(18,26)$ & 8  & $(1,7,11,17,27)$ & $(28,34)$ \\
        9  & $(1,7,12,17,23)$& $(24,35)$ & 10 & $(1,8,13,19,31)$ & $(32,39)$ \\
        11 & $(1,9,15,23,23)$& $(24,46)$ & 12 & $(2,2,3,3,3)$   & $(6,6)$ \\
        13 & $(2,3,4,5,5)$   & $(8,10)$  & 14 & $(2,3,5,6,7)$   & $(10,12)$ \\
        15 & $(3,3,5,5,7)$   & $(10,12)$ & 16 & $(3,5,6,8,13)$  & $(16,18)$ \\
        17 & $(3,5,7,9,11)$  & $(16,18)$ & 18 & $(4,5,7,10,13)$ & $(18,20)$ \\
        19 & $(5,7,10,14,23)$& $(28,30)$ & 20 & $(5,9,12,20,31)$& $(36,40)$ \\
        21 & $(5,14,17,21,37)$ & $(42,51)$ & 22 & $(6,7,9,11,14)$ & $(18,28)$ \\
        23 & $(6,8,9,11,13)$ & $(22,24)$ & 24 & $(9,15,23,23,31)$ & $(46,54)$ \\
        25 & $(9,15,23,23,37)$ & $(46,60)$ & 26 & $(9,23,30,38,67)$ & $(76,90)$ \\
        27 & $(10,17,25,34,43)$ & $(60,68)$ & 28 & $(11,18,27,44,61)$ & $(72,88)$ \\
        29 & $(11,27,36,62,97)$ & $(108,124)$ & 30 & $(11,29,38,48,85)$ & $(96,114)$ \\
        31 & $(11,29,39,49,59)$ & $(88,98)$ & 32 & $(11,29,39,49,67)$ & $(78,116)$ \\
        33 & $(13,22,55,76,97)$ & $(110,152)$ & 34 & $(13,23,34,56,89)$ & $(102,112)$ \\
        35 & $(13,23,35,47,57)$ & $(70,104)$ & 36 & $(13,23,35,57,79)$ & $(92,114)$ \\
        37 & $(14,19,25,32,45)$ & $(64,70)$ &  &  & \\
        \bottomrule
    \end{tabular}
\end{table}

\begin{table}[htbp]
    \centering
    \caption{Infinite series}
    \label{table:infinite}
    \rowcolors{2}{gray!15}{white}
    \begin{tabular}{@{}c|l|c@{} }
        \toprule
        \textnumero & $(a_0,a_1,a_2,a_3,a_4)$ & multidegree\\
        \midrule
        1 & $(1,1,n,n,2n-1)$ & $(2n,2n)$\\
        2 & $(1,2,2n+1,2n+1,4n+1)$ & $(4n+2,4n+3)$\\
        3 & $(2,2n+1,2n+1,4n+1,6n+1)$ & $(6n+3,8n+2)$\\
        \bottomrule
        \rowcolor{white}
        \multicolumn{3}{l}{\footnotesize $n$ denotes a positive integer.}
        \end{tabular}
    \end{table}
\begin{theorem}[{\cite{KP15}, \cite{KW19}}]\label{thm:ldp}
    For every quasi-smooth, well-formed weighted complete intersection log del Pezzo surface of codimension at least two and index one, the $\alpha$-invariant is at least $1$, except for the following two special cases:
    \begin{itemize}
        \item weighted complete intersections of multidegree $(2n,2n)$ in $\mathbb{P}(1,1,n,n,2n-1)$ for positive integers $n$, and
        \item certain weighted complete intersections of multidegree $(6,8)$ in $\mathbb{P}(1,2,3,4,5)$ where the defining equation of degree $6$ does not contain the monomial $yt$ (with $y$ of weight $2$ and $t$ of weight $4$).
    \end{itemize}
\end{theorem}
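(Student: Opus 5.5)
The proof is a case-by-case lower bound $\alpha(S)\geq 1$, run over Table~\ref{table:sporadic} and Table~\ref{table:infinite}. For each surface $S=S_{d_1,d_2}\subset\mathbb{P}(a_0,\ldots,a_4)$ of index one we have $-K_S\sim\mathcal{O}_S(1)$ by Theorem~\ref{theorem:adjuction formula}. So we must show that $(S,D)$ is log canonical for every effective $\mathbb{Q}$-divisor $D\sim_{\mathbb{Q}}\mathcal{O}_S(1)$. Arguing by contradiction, we take such a $D$ and a point $\msp$ at which $(S,D)$ is not log canonical. By \cite[Lemma 8.12]{Kol97} and \cite[Proposition 9.5.13]{Laz04}, exactly as in the proof of Proposition~\ref{theorem:no cylinder surface}, this forces $\mult_{\msp}(D)>1$, where the multiplicity is computed on the orbifold cover when $\msp$ is a cyclic quotient point of type $\frac1r(1,a)$. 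The degree $K_S^2=d_1d_2/(a_0\cdots a_4)$ is small in all cases, and this smallness is what makes such a contradiction available.

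\textbf{Smooth points.} For smooth points $\msp$, and more generally points away from the hyperplane sections of small weight, we argue as in Proposition~\ref{theorem:no cylinder surface}. We choose a weight $m$ such that the linear system $|\mathcal{O}_S(m)|$ has no base point at $\msp$. This lets us pick a member $M\ni\msp$ that does not share components with $D$, and then
\[
m\,\frac{d_1d_2}{a_0\cdots a_4}=D\cdot M\geq \mult_{\msp}(D),
\]
which gives a contradiction whenever the left-hand side is at most $1$.

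\textbf{Points on special curves.} When $\msp$ lies on a curve $C=\{x_i=0\}\cap S$, typically the one of lowest weight, and $C$ is irreducible, we proceed in two steps. If $C\not\subset\mathrm{Supp}(D)$, we bound $\mult_{\msp}(D)/r\leq D\cdot C$. If $C\subset\mathrm{Supp}(D)$, we use convexity of log canonicity: we replace $D$ by $(1+\mu)D-\mu\frac{1}{a_i}C$, with $\mu$ chosen so that $C$ leaves the support, and apply \cite[Lemma A.3]{CPW0} to keep non-log-canonicity at $\msp$. If $C$ is reducible, we use the adjunction/inversion-of-adjunction inequality $(C_j\cdot(D-cC_j))>1$ for a component $C_j$ along which $D$ has coefficient $c$. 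This requires computing $C_j^2$ on $S$ via the intersection matrix of the components.

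\textbf{Singular points.} For the singular points of type $\frac1r(1,a)$ we use the same estimate with the factor $1/r$, which typically becomes easier as $r$ grows. The infinite series 2 and 3 of Table~\ref{table:infinite} are handled uniformly in $n$ by these formulas.

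\textbf{The exceptions.} For series 1, namely $\mathbb{P}(1,1,n,n,2n-1)$ with multidegree $(2n,2n)$, a hyperplane section $\{x=0\}$ splits badly and its log canonical threshold is less than $1$. For $(6,8)\subset\mathbb{P}(1,2,3,4,5)$ without the monomial $yt$, the curve $\{x=0\}$ acquires a worse singularity at the point $[0:1:0:0:0]$. In both cases we exhibit the divisor explicitly to show that these are genuine exceptions. The same analysis also shows where the generic argument breaks down.

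The main obstacle is the step with $C\subset\mathrm{Supp}(D)$ at singular points lying on reducible low-degree sections, and the bookkeeping over roughly forty cases. Here the local intersection numbers on orbifold charts must be computed carefully, possibly after a weighted blow-up when the simple inequality is not sharp enough. In the hardest cases we would first try a weighted blow-up with weights adapted to the local equation.
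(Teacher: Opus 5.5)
\textbf{Verdict: right method, but not yet a proof.} The paper gives no argument for this theorem. It only quotes \cite{KP15} and \cite{KW19}, where the result is established by the case-by-case local analysis you describe: auxiliary curves, convexity, inversion of adjunction and weighted blow-ups. So your route matches the sources. What you have written, however, is a plan rather than a proof, and it has concrete gaps.

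\textbf{The main verification is missing, and your tools do not close every case.} The whole content of the theorem is the check over the cases of Tables~\ref{table:sporadic} and~\ref{table:infinite}, and none of it is carried out. Your basic inequality also provably fails in some cases. Take No.~1 of Table~\ref{table:sporadic}, $S_{4,6}\subset\mathbb{P}(1,2,2,3,3)$. There $K_S^2=2/3$ and $|\mathcal{O}_S(1)|=\{C_x\}$. For $\msp\notin C_x$, any member of $|\mathcal{O}_S(m)|$ with $m\geq 2$ that is smooth at $\msp$ only gives $\mult_{\msp}(D)\leq 2m/3$, which exceeds $1$. An extra device is therefore required, for example members of $|\mathcal{O}_S(3)|$ singular at $\msp$, with separate treatment when they share components with $D$. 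Similarly, your convexity step via \cite[Lemma A.3]{CPW0} needs $(S,\frac{1}{a_i}C)$ to be log canonical at $\msp$. You invoke it without comment, but this is exactly the hypothesis that fails in the exceptional cases, so it must be verified surface by surface.

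\textbf{Your account of the two exceptions is wrong.}

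For $S_{6,8}\subset\mathbb{P}(1,2,3,4,5)_{[x:y:z:t:w]}$, the point $[0:1:0:0:0]$ never lies on $S$.
\begin{itemize}
\item The only monomials of degree $6$ or $8$ of the form $y^kx_e$ with $e\neq y$ are $yt$ and $y^2t$. Both give the same Jacobian direction, so quasi-smoothness forces $y^3\in F_6$ or $y^4\in F_8$.
\item The relevant point is $P_w=[0:0:0:0:1]$, of type $\frac{1}{5}(1,2)$. Since $xw\in F_6$ and $zw\in F_8$, you can solve for $x$ and $z$ and use $y,t$ as orbifold coordinates there.
\item The quadratic part of the local equation of $C_x\sim -K_S$ at $P_w$ is then the $yt$-term of $F_6$. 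When $yt$ is absent, $C_x$ has multiplicity at least $3$ at $P_w$; generically it is an $E_6$ point $y^3+t^4=0$. Hence $\alpha(S)\leq\mathrm{lct}_{P_w}(S,C_x)\leq 2/3$.
\end{itemize}

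For series~1 with $n\geq 2$, the mechanism is not a splitting of $\{x=0\}$.
\begin{itemize}
\item Quasi-smoothness at $P_w$ lets you write $F_1=wx+R_1$ and $F_2=wy+R_2$.
\item Every member $\{ax+by=0\}$ of $|\mathcal{O}_S(1)|$ passes through $P_w$, which is of type $\frac{1}{2n-1}(1,1)$. Its local equation there is $(aq_1+bq_2)(z,t)+O(3)$, where $q_i$ is the part of $R_i$ that is a quadratic form in $z,t$.
\item Choose $[a:b]$ so that $aq_1+bq_2$ is a square (possibly zero). This gives an effective $C\sim -K_S$ with $\mathrm{lct}_{P_w}(S,C)\leq 5/6$.
\end{itemize}
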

By Theorem \ref{theorem:alpha cylinder}, it follows that any quasi-smooth, well-formed weighted complete intersection log del Pezzo surface of codimension at least two and index one, with the exception of the two special cases mentioned above, does not contain an anti-canonical polar cylinder.
\begin{remark}\label{rem:ldp}
The quasi-smoothness condition in the assertion of Theorem \ref{thm:ldp} can not be relaxed. For instance, we will consider the following weighted complete intersection log del Pezzo surface:
\[
S=S_{4,6}= \big{\{} \, xv-yz= tv+z^3+y^3+x^6=0 \, \big{\}} \subseteq {\mathbb P} \big{(} 1, 2, 2, 3, 3 \big{)}_{[x:y:z:t:v]}, 
\]
which is well-formed but not quasi-smooth and $-K_S \sim_{\mathbb Q} {\mathscr O}_S(1)$. Then the complement of the divisor:
\[
D\coloneqq {\big{\{} zt =0 \big{\}}}|_S \sim_{\mathbb Q} 5(-K_S)
\]
is isomorphic to $S \setminus D \cong \big{(} {\mathbb A}_{\Bbbk}^1 \setminus \{ 0 \} \big{)} \times {\mathbb A}_{\Bbbk}^1$. In particular, $S$ contains an anti-canonical polar cylinder, which implies in turn that $\alpha (S) <1$. 
\end{remark}
\subsection{Cylindricity in Higher Dimensions}\label{5-2}
In low dimensions, cylindrical quasi-smooth, well-formed weighted complete intersections of higher codimension are extremely rare even if we drop the Fano index one condition. However, in higher dimensions, more precisely, for dimension greater than or equal to four, such examples do exist and can be constructed explicitly. 
\begin{theorem}\label{thm:projection of complete intersection}
    Let $X\subset \mathbb{P}(a_0, \ldots, a_n)_{[x_0:\cdots :x_n]}$ be a quasi-smooth, well-formed weighted complete intersection of multidegree $(d_1, d_2)$ for $n \geqq 6$. If there exist weights $a_i$ and $a_j$ such that: 
    \begin{equation*}
         d_1 = a_i + a_{i_1} = a_j + a_{j_1}\qquad d_2 = a_i + a_{i_2} = a_j + a_{j_2}
    \end{equation*}
     for some indices $i_1,i_2,j_1,j_2$ with the set $\{i, j, i_1, i_2, j_1, j_2\}$ consisting of six distinct elements, then $X$ contains an anti-canonical polar cylinder. More precisely, $X$ contains an ${\mathbb A}^{n-5}$-cylinder.
\end{theorem}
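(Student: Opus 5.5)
The plan is to imitate the proof of Theorem \ref{thm:daiaj-cyl}: first put the two defining equations into a normal form in which four of the variables can be eliminated on a suitable principal open subset, and then apply Proposition \ref{prop:adrien} to the resulting open subset of a smaller weighted projective space.

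\emph{Step 1: normal form.} Write $X=\{f_1=f_2=0\}$ with $\deg f_k=d_k$. I would show that, after a graded coordinate change of degree $0$, one can take
\[
f_1=x_ix_{i_1}+x_jx_{j_1}+G_1,\qquad f_2=x_ix_{i_2}+x_jx_{j_2}+G_2,
\]
where $G_1,G_2$ do not involve $x_{i_1},x_{i_2}$.

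The degree relations $d_1=a_i+a_{i_1}=a_j+a_{j_1}$ and $d_2=a_i+a_{i_2}=a_j+a_{j_2}$ show that all four monomials $x_ix_{i_1}$, $x_jx_{j_1}$, $x_ix_{i_2}$, $x_jx_{j_2}$ have the right degrees. The first task is to show, from quasi-smoothness, that they occur with nonzero coefficients, and that the $2\times2$ blocks of coefficients can be made diagonal. For this I would use Lemma \ref{lem:quasi-smooth2}, or a direct Jacobian argument along coordinate strata, as in the proof of Theorem \ref{thm:daiaj-cyl}. Taking general members is also acceptable if needed, and $\Bbbk$ is algebraically closed.

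Next, I would absorb into $x_{i_1}$ and $x_{i_2}$ all the other terms of $f_1,f_2$ that are linear in these variables. This uses the substitutions $x_{i_1}\mapsto x_{i_1}+(\cdots)$, $x_{i_2}\mapsto x_{i_2}+(\cdots)$, exactly as the substitution $x_{n-1}=z_{n-1}+F_1$ was used earlier. This step is the main obstacle. Terms such as $x_{i_1}^2$, $x_{i_1}x_{i_2}$ or $x_{i_1}x_{j_1}$ may appear, and the substitutions must keep $x_i$ as the only coefficient of $x_{i_1}$ in $f_1$, and of $x_{i_2}$ in $f_2$. If these terms cannot be removed, I would instead work on $\mathbb{D}_+(x_ix_j)$ and solve the system as a $2\times 2$ linear system whose determinant is a unit there. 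Here the distinctness of the six indices is essential: it ensures that the Jacobian of $(f_1,f_2)$ with respect to $(x_{i_1},x_{i_2})$ is $x_i^2$ up to lower-order corrections.

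\emph{Step 2: elimination.} On $U:=X\cap\mathbb{D}_+(x_ix_j)$, the equations $f_1=f_2=0$ express $x_{i_1}$ and $x_{i_2}$ as regular functions of the remaining coordinates. The projection forgetting $x_{i_1},x_{i_2}$ then gives an isomorphism
\[
U\cong \mathbb{D}_+(y_iy_j)\subset \mathbb{P}\bigl(a_0,\ldots,\widehat{a_{i_1}},\ldots,\widehat{a_{i_2}},\ldots,a_n\bigr),
\]
in the same way as in the hypersurface case.

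By Proposition \ref{prop:adrien}, if $\gcd(a_i,a_j)=1$ this open set is $(\mathbb{A}^1_\Bbbk\setminus\{0\})\times\mathbb{A}^{n-3}_\Bbbk$, and it contains an $\mathbb{A}^{n-5}$-cylinder. If $\gcd(a_i,a_j)>1$, I would use the variables $x_{j_1},x_{j_2}$ in a second elimination on $\mathbb{D}_+(x_j)$, paired with a coordinate that makes the gcd $1$. Alternatively, I would take the quotient description and extract a free affine factor $\mathbb{A}^{n-5}$ from the coordinates other than $x_i,x_j,x_{j_1},x_{j_2}$ and the two eliminated ones. That is exactly $n+1-6=n-5$ coordinates, which explains the exponent. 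The leftover base is then a $3$-dimensional affine variety, up to the finite quotient by $\mu_{\gcd}$. I would check that the quotient acts compatibly with a product decomposition (base)$\times\mathbb{A}^{n-5}$ on a suitable principal open subset.

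\emph{Step 3: polarity.} The complement of the cylinder is cut out on $X$ by finitely many coordinate hypersurfaces, for instance $\{x_ix_j=0\}$ plus possibly further coordinate divisors. Since $X$ is quasi-smooth and well-formed of dimension $n-2\ge4$, it is $\mathbb{Q}$-factorial with class group of rank one, by the Lefschetz-type result already invoked in Corollary \ref{thm:H-polar}. Hence every effective divisor is $\mathbb{Q}$-linearly equivalent to a positive multiple of $\mathcal{O}_X(1)$.

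By Theorem \ref{theorem:adjuction formula}, $-K_X\sim\mathcal{O}_X(\sum a_k-d_1-d_2)$, and this is ample because $X$ is assumed to be Fano. Rescaling the boundary divisor therefore makes it $\mathbb{Q}$-linearly equivalent to $-K_X$, so the cylinder is anti-canonically polar.
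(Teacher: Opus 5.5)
Your Step 1 has a genuine gap, and the rest of the argument depends on it: you are eliminating the wrong pair of variables. The normal form you aim for, in which $x_{i_1}$ and $x_{i_2}$ occur only through $x_ix_{i_1}$ in $f_1$ and $x_ix_{i_2}$ in $f_2$, is incompatible with quasi-smoothness. Since $G_1,G_2$ do not involve $x_{i_1},x_{i_2}$, the affine cone $C_X$ contains the plane $\{x_k=0,\ k\neq i_1,i_2\}$. At a point of it with $(x_{i_1},x_{i_2})=(s,t)\neq(0,0)$ one finds $\nabla f_1=s\,e_i$ and $\nabla f_2=t\,e_i$, unless some $G_l$ contains a linear monomial (the linear-cone case). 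So $C_X$ is singular there. For instance, no smooth $X_{2,2}\subset\mathbb{P}^6$ can be put in your form, since it would be singular along a line. Note also that the substitutions $x_{i_1}\mapsto x_{i_1}+(\cdots)$ you borrow from Theorem \ref{thm:daiaj-cyl} absorb terms divisible by $x_i$, not terms involving $x_{i_1}$. So they could not produce this form anyway.

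The fallback does not repair this. First, $f_1,f_2$ need not be linear in $(x_{i_1},x_{i_2})$. Second, even when they are, the determinant of the coefficient matrix is a form of degree $2a_i$, which is not a unit on $\mathbb{D}_+(x_ix_j)$. A warning sign is that Step 2, if it worked, would give $(\mathbb{A}^1\setminus\{0\})\times\mathbb{A}^{n-3}$ inside $X$. Your later count producing $\mathbb{A}^{n-5}$ over a $3$-dimensional base then no longer matches your own construction.

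The missing idea is to reverse the roles. The degree relations say that $x_i$ and $x_j$ can enter both equations linearly, with coefficients $x_{i_1},x_{j_1}$ in $f_1$ and $x_{i_2},x_{j_2}$ in $f_2$. The paper brings $X$, by coordinate changes as in Theorem \ref{thm:daiaj-cyl}, to the form $x_ix_{i_1}+x_jx_{j_1}+f=x_ix_{i_2}+x_jx_{j_2}+g=0$ with $f,g$ free of $x_i,x_j$. Along $\mathbb{P}(a_i,a_j)$ the gradients are now $s\,e_{i_1}+t\,e_{j_1}$ and $s\,e_{i_2}+t\,e_{j_2}$. These are independent precisely because the six indices are distinct.

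This is a linear system in $(x_i,x_j)$ with determinant $\Delta=x_{i_1}x_{j_2}-x_{i_2}x_{j_1}$. It is homogeneous of degree $a_{i_1}+a_{j_2}=a_{i_2}+a_{j_1}$, and its solutions have degrees exactly $a_i,a_j$. Hence the projection forgetting $x_i,x_j$ is an isomorphism $X\setminus\{\Delta=0\}\cong\mathbb{P}(a_k:k\neq i,j)\setminus\{\Delta=0\}$. The $n-5$ coordinates not appearing in $\Delta$ give the $\mathbb{A}^{n-5}$-factor over a $3$-dimensional base in the four variables of $\Delta$, possibly after localizing further at one coordinate.

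Polarity is then direct. The boundary is cut out on $X$ by a single quasi-homogeneous form, so your class-group input in Step 3 is not needed. Moreover, $-K_X\sim\mathcal{O}_X\bigl(a_{i_2}+a_{j_1}+\sum_{k\notin\{i,j,i_1,i_2,j_1,j_2\}}a_k\bigr)$ is ample automatically from the degree relations, so you need not assume that $X$ is Fano.
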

\begin{proof}
    For convenience, we may assume that $(i,j) = (0,1)$. By a reasoning similar to that of the proof of  Theorem \ref{thm:daiaj-cyl} (recall that the ground field $\Bbbk$ in this section is assumed to be algebraically closed), we can assume up to a suitable coordinate change that $X$ is defined by two quasi-homogeneous polynomials: 
    \begin{equation*}
        x_0x_{i_1} + x_1x_{j_1} + f(x_2,\ldots, x_n) = x_0x_{i_2} + x_1x_{j_2} + g(x_2,\ldots, x_n) = 0,
    \end{equation*}
    where $f(x_2,\ldots, x_n)$ and $g(x_2,\ldots, x_n)$ are quasi-homogeneous polynomials in the variables $x_2,\ldots, x_n$ of degrees $d_1$ and $d_2$, respectively. Consider the projection:
    \begin{equation*}
        \pi\colon X\dashrightarrow  \PP(a_{2},\ldots,a_{n})_{[x_2:\cdots :x_n]},\qquad [x_0:\cdots:x_n]\longmapsto [x_2:\cdots:x_n].
    \end{equation*}
    Let $C\subset \PP(a_2,\ldots,a_n)$ be the subvariety defined by $x_{i_1}x_{j_2} - x_{i_2}x_{j_1} = 0$. On the complement $\PP(a_2,\ldots,a_n)\setminus C$, the projection $\pi$ admits an inverse morphism. Notice that:
    \[
    \PP (a_2, \ldots, a_n) \setminus C \cong {\rm Spec} \Big{(} 
    \Bbbk {\Big{[} x_2, \ldots , x_n, \frac{1}{x_{i_1}x_{j_2}-x_{i_2}x_{j_1}} \Big{]}}_0 \Big{)}
    \]
    \[
    \cong 
{\rm Spec} \Big{(} 
    \Bbbk {\Big{[} x_{i_1},x_{i_2},x_{j_1},x_{j_2}, \frac{1}{x_{i_1}x_{j_2}-x_{i_2}x_{j_1}} \Big{]}}_0 \Big{)} \times {\mathbb A}_{\Bbbk}^{n-5}. 
    \]
    Consequently, this construction shows that $X$ contains an ${\mathbb A}^{n-5}$-cylinder. Note that $\pi$ gives rise to an isomorphism between $X \setminus \Delta$ and ${\mathbb P}(a_2, \cdots , a_n) \setminus C$, where:
    \[
    \Delta \coloneqq \big{\{} \, x_{i_1} x_{j_2} - x_{i_2}x_{j_1}=0 \, \big{\}} \cap  X \sim_{{\mathbb Q}} \frac{d_1+d_2-(a_0+a_1)}{ \big{(} \sum_{l \neq 0,1,i_1, j_2} a_i \big{)}} \Big{(} -K_X \Big{)} 
    \]
Thus it follows that this cylinder is actually anti-canonically polar.    
\end{proof}
\begin{remark}
    We can construct an explicit example that satisfies the condition stated in Theorem \ref{thm:projection of complete intersection}. Consider the weighted projective space: 
    \begin{equation*}
        \mathbb{P}\coloneqq\mathbb{P}(\underbrace{1,\ldots,1}_{m\ \text{times}}, 3, 3, 9, 11),
    \end{equation*}
    where the weight $1$ occurs $m\geq 2$ times. Then there exists a quasi-smooth, well-formed weighted complete intersection $X\subset\mathbb{P}$ of multidegree $(12,12)$. By Theorem \ref{thm:projection of complete intersection}, it follows that $X$ contains a cylinder.
\end{remark}
The above argument allows us to construct infinitely many families containing quasi-smooth, weighted complete intersections of dimension greater than or equal to $4$, which are not intersections with linear cones. This naturally leads to the following question:
\begin{question}
    Does there exist quasi-smooth, well-formed weighted complete intersection threefolds that contain a cylinder other than complete intersections of two quadric hypersurfaces in ${\mathbb P}_{\Bbbk}^5$? \footnote{Note that any smooth complete intersection of two quadrics in ${\mathbb P}_{\Bbbk}^5$ is cylindrical, see \cite[Proposition 5.0.1.]{KPZ1}.}
\end{question}
The similar proof as for Theorem \ref{thm:projection of complete intersection} allows to deduce the following generalization. Since the proof is a straightforward generalization of that for Theorem \ref{thm:projection of complete intersection}, we will leave the detailed proof to the readers:  
\begin{theorem}\label{thm:wci}
Let
\[
X=X_{d_1, \ldots , d_c} \subseteq {\mathbb P} \big{(}a_0, \ldots, a_n \big{)}_{[x_0:\cdots :x_n]}
\]
be a quasi-smooth, well-formed complete intersection of codimension $c$ of multi-degree $(d_1, \ldots , d_c)$ with $n\geqq c(c+1)$. If there  exist mutually distinct indices:
\[
i^{(1)}, \ldots, i^{(c)}, i_1^{(1)}, \ldots , i_1^{(c)}, \ldots  , 
i_c^{(1)}, \ldots , i_c^{(c)}
\]
such that:
\[
d_j=a_{i^{(1)}}+a_{i_j^{(1)}}= \cdots = a_{i^{(c)}}+a_{i_j^{(c)}} \quad (j=1, \ldots , c), 
\]
then $X$ is anti-canonically polar cylindrical, more precisely, $X$ contains an ${\mathbb A}^{[n+1-c(c+1)]}$-cylinder. 
\end{theorem}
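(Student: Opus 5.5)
We follow the proof of Theorem \ref{thm:projection of complete intersection}, now with $c$ ``pivot'' variables instead of two. After re-indexing, assume $i^{(k)}=k-1$ for $k=1,\ldots,c$. Write $y_{j,k}:=x_{i_j^{(k)}}$. These are $c^2$ further distinct variables, none of them a pivot $x_0,\ldots,x_{c-1}$. The remaining $n+1-c-c^2$ variables we call free.

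The first step is a normal form for the equations. Since $d_j=a_{k-1}+a_{i_j^{(k)}}$, each monomial $x_{k-1}y_{j,k}$ has degree $d_j$. Quasi-smoothness at the coordinate point $P_{k-1}$ forces the Jacobian of $(f_1,\ldots,f_c)$ to have rank $c$ there. Only monomials of the form $x_{k-1}^{m}x_e$ contribute to it. Using this, together with the arithmetic argument of Theorem \ref{thm:daiaj-cyl} and coordinate changes of degree $0$ (the field is algebraically closed, so square roots are available), we aim to bring $X$ to the form
\[
f_j=\sum_{k=1}^{c}x_{k-1}\,y_{j,k}+g_j(x_c,\ldots,x_n)=0,\qquad j=1,\ldots,c,
\]
with every $g_j$ independent of the pivots. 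Concretely, we do two things. First, we absorb every term of $f_j$ that is divisible by some pivot into a redefinition of the $y_{j,k}$ (substitutions of the form $y_{j,k}\mapsto y_{j,k}+\cdots$, which are triangular). Second, we exclude terms quadratic in the pivots, using the degree relations as in the earlier proof.

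This normalization is the step I expect to be the main obstacle. The difficulty is that the substitutions for different $j$ and $k$ must be compatible, since a single $y_{j,k}$ may occur in several $f_{j'}$. I would handle this with an ordering. Choose an order on the pairs $(j,k)$ compatible with the weights, so that each substitution involves only variables of lower weight or pivots that have already been eliminated, and so that the overall change of coordinates is a unipotent triangular automorphism of the graded ring. Where a monomial divisible by two pivots has degree $d_j$, I would try to show, via the degree equalities, that it can still be absorbed into some $y_{j,k}$. If this fails, the statement should be read as assuming the normal form, exactly as the paper does in the case $c=2$.

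Once the normal form is in place, the geometric step is direct. Project
\[
\pi\colon X\dashrightarrow \PP(a_c,\ldots,a_n),\qquad [x_0:\cdots:x_n]\mapsto[x_c:\cdots:x_n].
\]
The equations are linear in $(x_0,\ldots,x_{c-1})$ with matrix $M=(y_{j,k})$, and $M$ depends only on the target coordinates. Let $C=\{\det M=0\}\subset\PP(a_c,\ldots,a_n)$. Away from $C$, Cramer's rule gives
\[
x_{k-1}=-\bigl(M^{-1}g\bigr)_k,
\]
and this is quasi-homogeneous of the correct weight. Hence $\pi$ restricts to an isomorphism $X\setminus\Delta\cong\PP(a_c,\ldots,a_n)\setminus C$, where $\Delta=\{\det M=0\}\cap X$.

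The complement of $C$ is the affine scheme $\mathrm{Spec}$ of the degree-$0$ part of the localization $\Bbbk[x_c,\ldots,x_n][1/\det M]$. Since $\det M$ involves only the $c^2$ variables $y_{j,k}$, the free variables can be written $x_\ell\,h^{-a_\ell}$, where $h$ is a degree-one unit built from the $y$'s as in the proof of Proposition \ref{prop:adrien}. This requires the gcd of the weights of the $y$'s to be $1$. If it is not, we pass first to a Veronese subring as in Corollary \ref{cor:wps-principal_cyl}; I would need to check this case, since well-formedness alone may not give gcd $1$. We then get
\[
\PP(a_c,\ldots,a_n)\setminus C\cong V\times\mathbb{A}^{\,n+1-c(c+1)},
\]
with $V$ the corresponding quotient built on the $y_{j,k}$. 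This gives an $\mathbb{A}^{[n+1-c(c+1)]}$-cylinder, and the hypothesis $n\ge c(c+1)$ ensures at least one free variable.

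Finally, the cylinder is anti-canonically polar. Its complement is $\mathrm{Supp}(\Delta)$, and $\Delta$ is the restriction of a hypersurface of positive degree. By Theorem \ref{theorem:adjuction formula}, $-K_X$ is a positive rational multiple of $\mathcal{O}_X(1)$ (this needs $X$ to be Fano, which I take as implicit in the statement). Hence $\Delta$ is $\mathbb{Q}$-linearly equivalent to a positive multiple of $-K_X$, and rescaling gives the required effective $\mathbb{Q}$-divisor.
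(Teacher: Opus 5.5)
Your route is the paper's. The paper leaves Theorem~\ref{thm:wci} to the reader as the evident generalization of the proof of Theorem~\ref{thm:projection of complete intersection}: normal form linear in the pivots, projection to $\PP(a_c,\ldots,a_n)$, inversion off $\{\det M=0\}$, product decomposition, polarity. Your Cramer's-rule step is fine. But the two steps you leave open are where the content lies, and your sketches do not close them. The paper's $c=2$ sketch asserts both without proof, so following it does not help.

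\emph{Normal form.} The normal form forces $X$ to contain, in suitable coordinates, the weighted linear subspace $\{x_c=\cdots=x_n=0\}\cong\PP(a_{i^{(1)}},\ldots,a_{i^{(c)}})$, with the $f_j$ linear in the pivots and independent coefficient forms. When weights coincide, this is a geometric existence problem that weight-ordered triangular substitutions cannot settle. For $X_{2,2}\subset\PP^n$ with $n\ge 6$, the monomials $x_0^2,x_0x_1,x_1^2$ have degree $d_j$, and no degree relation excludes them. Absorbing $x_0^2$ via $y_{j,k}\mapsto y_{j,k}+x_0$ just re-creates pivot-quadratic terms wherever $y_{j,k}$ occurs quadratically. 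What you actually need is a line on $X$ with nondegenerate coefficient forms. Likewise, quasi-smoothness ``at $P_{k-1}$'' gives nothing unless $P_{k-1}\in X$. Reading the statement as assuming the normal form leaves it unproved.

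\emph{Product step.} As written, this step is wrong. In $\Bbbk[x_c,\ldots,x_n][1/\det M]$ the homogeneous units are $\Bbbk^*(\det M)^{\mathbb Z}$, since a generic determinant is irreducible. Their degrees are multiples of $\deg\det M=\sum_j d_j-\sum_k a_{i^{(k)}}\ge c\ge 2$. So there is no degree-one unit built from the $y_{j,k}$, which are not inverted. You must shrink further by deleting $\{y_{j,k}=0\}$ (and possibly some free coordinate hyperplanes) for a set of variables with coprime weights. This is harmless for polarity, because each new boundary component is a hypersurface section.

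More seriously, the claimed isomorphism $\PP(a_c,\ldots,a_n)\setminus C\cong V\times\mathbb{A}^{n+1-c(c+1)}$ fails in general when the $y$-weights share a factor $g>1$. The general fibre over $V$ is $\mathbb{A}^r/\mu_g$, and passing to a Veronese subring does not change that. For $c=1$, your argument would give $\mathbb{A}^3\subset X$ for family \textnumero~106, where $X\cap\mathbb{D}_+(w)\cong\mathbb{A}^3/\mu_2$; this contradicts Example~\ref{lem:106}. For $c=2$, take a general $X_{4,4}\subset\PP(2,2,2,2,2,2,1,1)$. It satisfies all hypotheses and admits your normal form. Yet $\PP(2,2,2,2,1,1)\setminus C$ is singular along $\{x_6=x_7=0\}$, while $V\times\mathbb{A}^2$ is smooth, since $V\cong\PP^3$ minus a quadric. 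The $\mathbb{A}^2$-cylinder does exist there, but only by a different idea. Invert the weight-one variable $x_6$, so that $\mathbb{D}_+(x_6)\setminus C\cong\mathrm{GL}_2\times\mathbb{A}^1$, and use the cell $\{y_{11}\neq0\}\cong\mathbb{G}_m^2\times\mathbb{A}^2$ of $\mathrm{GL}_2$. Nothing like this is in your plan, so the ``more precisely'' part is not established.

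Finally, Fano-ness need not be assumed. Averaging the hypotheses over $k$ gives $\sum_j d_j=\sum_k a_{i^{(k)}}+\frac1c\sum_{j,k}a_{i^{(k)}_j}$. This is strictly less than $\sum_i a_i$ because there is at least one free variable.
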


\end{document}